\newtheorem{thm}{Theorem}[section]
\newtheorem{defi}[thm]{Definition}
\newtheorem{cor}[thm]{Corollary}
\newtheorem{lem}[thm]{Lemma}
\newtheorem{prop}[thm]{Proposition}
\newtheorem{rem}[thm]{Remark}
\newtheorem{eg}[thm]{Example}
\def\<{\langle}    \def\>{\rangle}
\def\lp{\left(}  \def\rp{\right)}
\newcommand{\ve}{(V,E)}
\newcommand{\g}{(V, w,\mu)}
\newcommand{\go}{(V_o, w_o,\mu_o)}
\newcommand{\cl}{\text{\rm{cl}} }
\newcommand{\inte}{\text{\rm{int}} }
\newcommand{\ccv}{C_c(V) }
\newcommand{\ltwo}{L^2(V, \mu) }
\newcommand{\ddt}{\frac{\partial}{\partial t} }
\newcommand{\lap}{\frac{1}{\mu(x)}\sum_{y\in V}w(x,y) }
\newcommand{\xt}{\lp X_t\rp_{t\geq0} }
\def\PP{\mathbb{P}}
\def\ZZ{\mathbb{Z}}
\def\EE{\mathbb{E}}
\def\NN{\mathbb{N}}
\def\RR{\mathbb{R}}
\def\aa{\mathcal{A}}
\def\ll{\mathcal{L}}
\def\ff{\mathcal{F}}
\def\nn{\mathcal{N}}
\def\uu{\mathcal{U}}
\def\vv{\mathcal{V}}
\def\tt{\mathcal{T}}
\def\mm{\mathcal{M}}
\def\ddd{\mathscr{D}}
\def\lll{\mathscr{L}}
\def\eee{\mathscr{E}}
\def\fff{\mathscr{F}}
\def\ppp{\mathscr{P}}
\def\e{\varepsilon}
\def\p{\varphi}
\def\a{\alpha}
\def\s{\sigma}
\author[X. Huang]{Xueping Huang}
\address{Faculty of Mathematics and Information\\ FSU Jena, 07743 Jena, Germany}
\thanks{Research supported partially by Project CRC701 and DFG}
\email{xueping.huang@uni-jena.de}
\author[Y. Shiozawa]{Yuichi Shiozawa}
\address{Graduate School of Natural Science and Technology,
Department of Environmental and Mathematical Sciences, \\
Okayama University\\3-1-1, Tsushima-naka, Okayama, 700-8530,
Japan}
\email{shiozawa@ems.okayama-u.ac.jp }
\title[Upper escape rate]{Upper escape rate of Markov chains on weighted graphs}
\def\func#1{\mathop{\rm #1}\nolimits}%
\def\FRAME#1#2#3#4#5#6#7#8
\begin{document}
\numberwithin{equation}{section}

\begin{abstract}
We obtain
an upper escape rate function for a continuous time minimal symmetric Markov chain, defined on a locally finite weighted graph.
This upper rate function is given in terms of volume growth with respect to an adapted path metric and has the same form as the manifold setting.
Our approach also gives a slightly more restrictive form of Folz's theorem on conservativeness as a consequence.
\end{abstract}
\keywords{escape rate, upper rate function, Markov chains, weighted graphs}
\subjclass[2010]{Primary {60J27}, Secondary {05C81}}

\date{\today}

\maketitle
\section{Introduction}
\subsection{Brownian motion case}
The celebrated Khintchine's law of the iterated logarithm states that,
for the Brownian motion $\lp B_t\rp_{t\ge 0}$ on $\RR$,
\[\limsup_{t\rightarrow \infty} \frac{|B_t|}{\sqrt{2t \log\log t}}=1, ~~~~a.s.\]
In particular, we see that for the function $R(t)=\sqrt{(2+\e)t \log\log t}$ ($\e>0$),
\[\PP_0 \lp ~~|B_t|\leq R(t)~~\text{for all sufficiently large $t$~~}\rp =1.\]
Such a function $R(t)$ is called an upper escape rate function of the Brownian motion.

Some care has to be taken when generalizing this notion to a more general Markov process.  
Let $(V, d)$ be a locally compact
separable metric space and $\mu$ a positive Radon measure on $V$ with full support.
Let $V_{\infty}=V\cup\{\infty\}$ be the one point compactification of $(V, d)$.
If $(V,d)$ is already compact, then $\infty$ is adjoined as an isolated point.
Let $\mm=\lp \Omega,  \xt, \{\PP_x\}_{x\in V\cup\{\infty\}}, \{\ff_t\}_{t\ge 0}, \infty, \zeta\rp$ be
a $\mu$-symmetric Hunt process on $V$.
Here the sample space $\Omega$ is taken to be the space of right-continuous functions $\omega:[0,\infty]\rightarrow V_{\infty}$ such that:
\begin{enumerate}
  \item $\omega$ has
a left limit $\omega(t-)\in V_{\infty}$ for any $t\in (0,\infty)$;
  \item $\omega(t)=\infty$ for any $t\ge \zeta:=\inf\{s\ge 0: \omega(s)=\infty\}$ and $\omega(\infty)=\infty$.
\end{enumerate}
The random variable $X_t$ is defined as $X_t(\omega)=\omega(t)$ for $\omega\in \Omega$.
The random variable $\zeta$ is called the lifetime of the process $\mm$ and can be finite.
We will feel free to use some further properties of Hunt processes as presented in \cite{FOT} (Section A.2).
\begin{defi}
  \label{defi-upper-rate}
  Fix a reference point $\bar{x}\in V$.
A nonnegative
increasing
  function $R(t)$ is
  called an upper rate function for the process $\mm$, if there exists a random time $T<\zeta$ such that
  \[\PP_{\bar{x}}\lp d(X_t, \bar{x})\leq R(t)~~\text{for all
 ~~}T\le t<\zeta\rp =1.\]
\end{defi}
\begin{rem}
  \rm{We do not extend the distance $d$ to $V_{\infty}$, so $d(X_t, \bar{x})$ has no definition if $t\ge \zeta$.}
\end{rem}\bigskip

 The existence of upper rate functions can be viewed as a quantitative version of conservativeness, which means that 
 $\PP_x \lp \zeta< \infty\rp=0$
 for all $x\in V$.

Many authors studied the escape rate for the Brownian motion on a complete Riemannian manifold.
Grigor'yan \cite{Gri99-escape} initiated the study of upper rate functions in terms of volume growth of the manifold.
Grigor'yan and Hsu \cite{Gri-Hsu} obtained the sharp form of the upper rate function for Cartan-Hadamard manifolds.
Recently, Hsu and Qin \cite{Hsu-Qin} obtained the sharp result in full generality.
\begin{thm}[Hsu and Qin \cite{Hsu-Qin}]
  \label{thm-Hsu-Qin}
  Let $M$ be a complete Riemannian manifold and fix $\bar{x} \in M$. Let $B(r)$
be the geodesic ball on $M$ of radius $r$ and centered at $\bar{x}$.
Assume that
\begin{equation}\label{eq-vol-sc}
  \int^{\infty}\frac{rdr}{\log \func{vol}\lp B(r)\rp}=\infty,
\end{equation}
where $\int^{\infty}$ means that we only care about the singularity at $\infty$.
Define
\[\psi(R)=\int_{6}^{R}\frac{rdr}{\log \func{vol}\lp B(r)\rp +\log\log r}.\]
Then there is a constant $C>0$ such that $C\psi^{-1}(C t)$
is an upper rate function of Brownian motion $\xt$ on $M$.
\end{thm}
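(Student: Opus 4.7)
The plan is to combine a Gaussian-type tail bound for the supremum of the displacement of Brownian motion over a short time window with a Borel--Cantelli argument along a carefully chosen increasing sequence of times.

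First, I would establish an exit-time estimate for $\xt$ on $M$. Using the maximum principle applied to a suitable supersolution of the heat equation (or equivalently, Davies--Gaffney combined with Grigor'yan's integrated heat-kernel upper bound), one obtains a bound of the schematic form
\[\PP_x\lp\sup_{0\le s\le t} d(X_s,x)\ge r\rp\le C\sqrt{\func{vol}(B(x,r))}\,\exp\lp-\frac{r^2}{Ct}\rp\]
valid when $r^2$ is large compared to $t$. The Gaussian decay in $r$ tempered by a $\sqrt{\func{vol}}$ prefactor is the analytic engine that selects the critical scale $r^2/\log\func{vol}(B(r))$ appearing in $\psi$.

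Second, I would discretize in time and space. Pick an increasing sequence $t_n$ (the basic choice is $t_n=n$, which can be tuned) and define radii $r_n$ by $\psi(r_n/C)=Ct_{n+1}$, so that
\[\frac{r_n^2}{\log\func{vol}(B(r_n))+\log\log r_n}\asymp t_{n+1}.\]
The divergence assumption \eqref{eq-vol-sc} ensures $\psi(R)\uparrow\infty$, so $\psi^{-1}$ is globally defined and $r_n\to\infty$. Applying the strong Markov property at $t_n$ together with the exit-time estimate on the window $[t_n,t_{n+1}]$ of unit length yields
\[\PP_{\bar x}\lp\sup_{s\in[t_n,t_{n+1}]}d(X_s,\bar x)\ge r_n\rp\le C\sqrt{\func{vol}(B(\bar x,r_n))}\,e^{-cr_n^2/(t_{n+1}-t_n)},\]
and the calibration of $r_n$ is designed precisely so that the exponential swallows the $\sqrt{\func{vol}(B(r_n))}$ factor and leaves a remainder of order $(\log r_n)^{-(1+\e)}$, which is summable in $n$. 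Borel--Cantelli then gives, $\PP_{\bar x}$-a.s., that the displacement of $X$ in each window $[t_n,t_{n+1}]$ stays below $r_n$ for all $n$ sufficiently large; since $R(t):=C\psi^{-1}(Ct)$ is increasing and satisfies $r_n\le R(t)$ on $[t_n,t_{n+1}]$ for appropriate constants, this proves that $R(t)$ is an upper rate function.

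The main obstacle is balancing the exponents so that the Borel--Cantelli sum converges \emph{and} the resulting envelope has the announced sharp form $C\psi^{-1}(Ct)$; it is precisely this balance that forces the $\log\log r$ correction in the denominator of $\psi$, separating the Hsu--Qin theorem from coarser earlier results. A secondary point is that \eqref{eq-vol-sc} plays a double role: it makes $\psi$ unbounded so that $\psi^{-1}$ is defined for all large $t$, and via Grigor'yan's volume test it simultaneously guarantees stochastic completeness, without which the notion of an upper rate function is vacuous.
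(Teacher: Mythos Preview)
This theorem is not proved in the paper; it is quoted as a result of Hsu and Qin and serves only as motivation for the paper's own Theorem~\ref{thm-main} on weighted graphs. So there is no proof here to compare against directly. That said, the paper does carry out in detail the analogous argument for the graph case (Section~\ref{sect-main-proof}), explicitly following the Grigor'yan--Hsu scheme, so one can read off what the intended manifold proof looks like.

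Your outline captures the right analytic input (a Gaussian exit-time bound with a $\sqrt{\mathrm{vol}}$ prefactor, obtained via the integrated maximum principle) and the right conclusion mechanism (Borel--Cantelli), but the discretization is set up in a way that does not close. In the Hsu--Qin/Grigor'yan--Hsu argument, and in this paper's Section~\ref{sect-main-proof}, one fixes a \emph{geometric} sequence of radii $R_n=2^{n+4}$ and estimates the \emph{crossing times} $\tau_n-\tau_{n-1}$ between successive balls; the relevant time increments $c_n\asymp R_n^2/(\log\mathrm{vol}(B(R_n))+\log\log R_n)$ are then variable, and the summand in Borel--Cantelli comes out as $(\log R_n)^{-3/2}\asymp n^{-3/2}$, which is summable precisely because $n$ indexes a geometric spatial scale. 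Your scheme instead fixes unit time windows $[n,n+1]$ and lets $r_n$ be determined by $\psi$. With that choice the summand you produce is of order $(\log r_n)^{-(1+\varepsilon)}$, but since $n$ now ranges over all integers and $r_n$ typically grows at most polynomially in $n$, this is $\asymp(\log n)^{-(1+\varepsilon)}$, which diverges. A second issue is the centering: after invoking the strong Markov property at $t_n$, the exit-time bound you quoted is centered at $X_{t_n}$, not at $\bar x$, so the volume factor and the radius in the exponent are not the ones you wrote down; the crossing-time formulation handles this cleanly because at time $\tau_{n-1}$ the process sits on $\partial B(\bar x,R_{n-1})$, and one then controls the exit from $B(\bar x,R_n)$ via a ball of radius $R_n-R_{n-1}=R_{n-1}$ around that boundary point. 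Reorganizing your argument along these lines---geometric radii, crossing times, strong Markov at $\tau_{n-1}$---is exactly what is done in Section~\ref{sect-main-proof}.
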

\begin{rem}\rm{
  As observed in \cite{Hsu-Qin}, (\ref{eq-vol-sc}) is equivalent to that $\lim_{R\rightarrow\infty}\psi(R)= \infty$.}
\end{rem}\bigskip
The sharpness of Theorem \ref{thm-Hsu-Qin} can be seen through examples of model manifolds.
The assumption (\ref{eq-vol-sc}) on volume growth guarantees
conservativeness (stochastic completeness/non-explosion) of the Brownian motion,
by a classical theorem of Grigor'yan \cite{GRI86-87} (see also \cite{GRI-SURVEY}, Theorem 9.1).
Roughly speaking, the borderline of volume growth satisfying (\ref{eq-vol-sc})
has the form \[\func{vol}\lp B(r)\rp=\exp \lp C r^2 \log r\log\log r\cdots \rp\]
for $r$ large enough. It is interesting to see that the same type integral
$\int\frac{rdr}{\log \func{vol}\lp B(r)\rp}$ controls conservativeness and upper rate function simultaneously.


\subsection{Main result}
The aim of this article is to prove an analogue to Theorem \ref{thm-Hsu-Qin} in the setting of continuous time 
symmetric Markov chains on locally finite weighted graphs.

We start with a gentle introduction to weighted graphs. More details will be given in the next section.


Let $(V, E)$ be a locally finite, countably infinite, connected, undirected graph without loops or multi-edges.
For abbreviation, such a graph is called a \emph{simple} graph. Here $V$ is the vertex set,
and $E$ is the edge set that is a symmetric subset of $V\times V$. For a pair $(x,y)\in E$, we write $x\sim y$ and call them neighbors.
For $x\in V$, the degree of $x$ is defined to be $\deg(x)=\#\{y\in
V: y\sim x\}$, i.e. the number of neighbors of $x$ which is finite by assumption.

By the connectivity of the graph, for any pair of distinct vertices $x, y\in V$, there always exists a sequence of vertices $x_0, \cdots, x_n$ in $V$ such that
\[x_0=x, x_n=y, x_k\sim x_{k+1} \text{~~for all~~} 0\leq k\leq n-1.\]
Such a sequence is called a path of length $n$ connecting $x$ and $y$. The graph metric $\rho$ on the graph $(V, E)$, can then be defined through
\[\rho(x,y)=\inf\{n: \text{~~~~there exists a path of length~~}n\text{~~connecting~~}x,y\}\]
for any pair of $x\neq y$. This induces the discrete topology on $V$.

A positive function $\mu : V\rightarrow(0,\infty)$ can be viewed as a Radon measure on $V$.

Let $w: V\times V\rightarrow[0,\infty)$ be a weight function on edges such that:
\begin{enumerate}
 \item $w$ is symmetric, that is, $w(x,y)=w(y,x)$  for all  $x, y \in V$;
 \item $(x,y)\in E \Leftrightarrow w(x,y)>0$.
\end{enumerate}
The triple $(V, w, \mu)$ is usually called a weighted graph.
Note that given a weighted graph $\g$, we can determine the edge set $E$ through $w$.
Throughout the paper, without further specification, we only consider weighted graphs with the underlying graph being \emph{simple}.

Define a matrix $\lp q_{xy}\rp_{x,y\in V}$ by
\[q_{xy}=\frac{w(x,y)}{\mu(x)}\]
for $x\neq y$ and
\[q_{xx}=-\lap.\] The matrix $\lp q_{xy}\rp_{x,y\in V}$ is called a $Q$-matrix and there is a unique continuous time, minimal,
 symmetric, c\`{a}dl\`{a}g Markov chain $\mm=\lp \xt, (\PP_x)_{x\in V}\rp$ associated with it (cf. \cite{Norris}, \cite{freedman}).
When there is no risk of confusion, we simply call $\xt$ the Markov chain of $\g$.
We will also talk about the conservativeness of $\g$ when we really mean that of $\xt$.

The construction of $\xt$ through a $Q$-matrix is equivalent (cf. \cite{Silverstein} Theorem 17.2)
to the construction as the Hunt process associated with the minimal (regular) Dirichlet form on $\g$,
which we will discuss in Section \ref{sect-settings}. 

For a metric $d$ on $V$, define the closed ball with center $x\in V$ of radius $r>0$ by \[B_{d}(x,r)=\{y\in V: d(x,y)\le r\}.\]
In general, we can not expect an analogue to Theorem \ref{thm-Hsu-Qin} for the Markov chain $\xt$
if we consider the volume growth of balls, $\mu \lp B_{\rho}(x,r)\rp$, in the graph metric $\rho$.
This is because
$\rho$ is determined only by the graph structure $\ve$ and is blind to the weights $\mu$ and $w$.
A useful tool is the notion of adapted metrics (or intrinsic metrics) introduced by Frank, Lenz, and Wingert \cite{FLW} and Masamune and Uemura \cite{MU}
(independently and in slightly different ways). See also Folz \cite{Folz} for the graph setting.

In this article, we will adopt a slightly stronger definition, the so-called adapted path metric \cite{HKMW}.
\begin{defi}
 Consider a weighted graph $\g$. A weight function $\sigma: E\rightarrow (0, 1]$ is called adapted if
 \begin{enumerate}
   \item $\sigma$ is symmetric, i.e.,
$\sigma(x,y)=\sigma(y,x)$, for any pair $x\sim y$;
   \item $\lap \sigma(x,y)^2\le 1$ for any $x\in V$.
 \end{enumerate}
An adapted path metric $d_{\sigma}$ is defined as

\begin{equation}\label{eq-adapted-path}
  d_{\sigma}(x,y)=\inf\{\sum_{i=0}^{n-1}\sigma(x_i, x_{i+1}): x=x_0\sim\cdots\sim x_n=y \text{~~is a path connecting~~}x,y\}.
\end{equation}
\end{defi}
\begin{rem}
  \rm{There always exists an adapted path metric on a weighted graph, though in general highly non-unique.
   For general weighted graphs, $d_{\sigma}$ as defined in (\ref{eq-adapted-path}) is only a pseudo metric.
It is always a metric on simple weighted graphs and induces the discrete topology (cf. \cite{HKMW}), due to locally finiteness and connectedness.
The restriction $\sigma\le 1$ is not serious; an upper bound suffices.}
\end{rem}\bigskip
For an adapted weight $\sigma$, the metric $d_{\sigma}$ is adapted in the sense that
\[\lap (u(x)-u(y))^2\le C^2,\forall x\in V\]
for any function $u$ on $V$ that is $C$-Lipschitz with respect to $d_{\sigma}$ for some $C>0$.
This is analogous to the property that $\vert\nabla u\vert^2\le C^2$
for $C$-Lipschitz functions with respect to the geodesic metric $d$ on a Riemannian manifold.

In general, balls in $(V, d_{\sigma})$ may not be compact (i.e. finite) (for examples cf. \cite{HKMW}).
However, if this is the case, the existence of an upper rate function implies the conservativeness of $\xt$.
Since this is kind of folklore and will not be directly used, we postpone an explanation (Lemma \ref{lem-ur-SC})
to Section \ref{sect-settings}.

The main result of this article is the following formula of upper rate function.

\begin{thm}\label{thm-main}
Let $\go$ be a simple weighted graph with an adapted path metric $d_{\sigma_o}$. Fix a reference point $\bar{x}\in V_o$.
Denote the associated Markov chain by $\lp X_t^o\rp_{t\ge0}$.
Assume that \begin{equation}\label{eq-assm-measure}
    C_{o}=\inf_{x\in V_o}\mu_o(x)>0.
  \end{equation} and that
\begin{equation}\label{eq-escape-rate-volume-growth}
\int^{\infty}\frac{r dr}{\log\mu\lp B_{d_{\sigma_o}}(\bar{x},r)\rp}=\infty.
      \end{equation}
Then we have
\begin{enumerate}
  \item $\lp X_t^o\rp_{t\ge0}$ is conservative;
  \item there exists some constant $c>0$, $\hat{R}\ge 1$,
such that
  the inverse function $\psi^{-1}(t)$ of
  \begin{equation}\label{eq-escape-rate}
    \psi(R)=c\int_{\hat{R}}^{R}\frac{rdr}{\log\mu\lp B_{d_{\sigma_o}}(\bar{x},r)\rp+\log\log r}
  \end{equation} 
is an upper rate function for $\lp X_t^o\rp_{t\ge0}$ with respect to $d_{\sigma_o}$.
\end{enumerate}
\end{thm}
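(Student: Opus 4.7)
The plan is to adapt the classical Grigor'yan strategy: establish a Gaussian-type exit-time bound from balls in the adapted path metric $d_{\sigma_o}$, then convert it into an almost-sure statement by a Borel--Cantelli argument along carefully chosen times and radii, with the $\log\log r$ correction in (1.5) supplying the summability. Before the main argument I would verify that (1.3) forces $\psi(R)\to\infty$: for any infinite connected locally finite graph, $\mu_o\lp B_{d_{\sigma_o}}(\bar x,r)\rp\to\infty$, so for large $r$ the term $\log\log r$ is dominated by $\log\mu_o\lp B_{d_{\sigma_o}}(\bar x,r)\rp$, making the integrand of $\psi$ comparable to that in (1.3). Together with $C_o>0$, assumption (1.3) also guarantees that the balls are finite, so that Folz's criterion applies and conservativeness is at least plausible.

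The key analytic input is a Davies--Gaffney-style exit-time estimate of the form
\[\PP_{\bar x}\lp \tau_{B_{d_{\sigma_o}}(\bar x,r)}\le t\rp \le C_1\lp\frac{\mu_o\lp B_{d_{\sigma_o}}(\bar x,r)\rp}{\mu_o(\bar x)}\rp^{1/2}\exp\lp-\frac{r^2}{C_2 t}\rp,\]
valid for all $r,t>0$. The adapted-metric condition $\sum_y w_o(x,y)\sigma_o(x,y)^2\le\mu_o(x)$ is exactly the discrete analogue of $|\nabla d|^2\le 1$, so the exponential multiplier $e^{\lambda d_{\sigma_o}}$ remains admissible in the Dirichlet form estimates, producing an $L^2$-Davies--Gaffney inequality for the heat semigroup on $\go$. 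The lower bound $C_o>0$ on $\mu_o$ is then used, via Cauchy--Schwarz, to turn the $L^2$ bound into a pointwise bound on $\PP_{\bar x}(X_t^o\in\cdot)$ and hence on the exit probability. An alternative route, closer to Folz's technique, is to compare with the metric (cable) graph associated to $\go$, where the walk becomes strongly local and the classical Davies--Gaffney inequality applies directly; one then transfers the estimate back to the discrete chain using $C_o>0$.

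With this estimate in hand I would fix $c>0$ small and apply Borel--Cantelli along $t_k=2^k$ with $r_k=\psi^{-1}(c\, t_k)$. The definition (1.5) of $\psi$ is tailored so that $r_k^2/(C_2 t_k)\gtrsim \log\mu_o\lp B_{d_{\sigma_o}}(\bar x,r_k)\rp+2\log\log r_k$, which makes the probabilities $\PP_{\bar x}(\tau_{B(\bar x,r_k)}\le t_k)$ summable; hence almost surely $\sup_{s\le t_k}d_{\sigma_o}(X_s^o,\bar x)\le r_k$ for all large $k$, and monotonicity in $t$ upgrades this to $d_{\sigma_o}(X_t^o,\bar x)\le \psi^{-1}(Ct)$ for all sufficiently large $t<\zeta$. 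Conservativeness is a byproduct: the same exit bound (without the $\log\log$ refinement) shows that for each fixed $t$, $\PP_{\bar x}(\zeta\le t)\le\lim_r\PP_{\bar x}(\tau_{B(\bar x,r)}\le t)=0$. The main obstacle is producing the Davies--Gaffney estimate with constants that depend only on the adapted metric and on $C_o$, and not separately on degrees or weights; this is where either the cable-graph comparison or a careful direct computation with the exponential multiplier must be pushed through.
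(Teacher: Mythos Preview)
The architecture you describe (exit-time bound plus Borel--Cantelli) matches the paper's, but the Davies--Gaffney estimate you propose is exactly where the argument breaks down, and not merely a matter of pushing constants through. When you run the exponential-multiplier computation with weight $e^{\lambda d_{\sigma_o}}$, the discrete difference $(e^{\lambda d_{\sigma_o}(y,z)}-e^{\lambda d_{\sigma_o}(x,z)})^2$ is bounded by $\lambda^2\sigma_o(x,y)^2 e^{2\lambda d_{\sigma_o}(x,z)}\cdot e^{2\lambda\sigma_o(x,y)}$ for $x\sim y$; the adapted condition absorbs the factor $\sigma_o(x,y)^2$, but the extra $e^{2\lambda\sigma_o(x,y)}\le e^{2\lambda}$ forces $\lambda$ to remain bounded by an absolute constant. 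Optimising in $\lambda$ then yields at best $\exp(-cr)$ rather than $\exp(-r^2/Ct)$ once $r/t$ is large. This suffices when the volume grows at most like $e^{Cr}$ --- which is precisely the range handled in the predecessor paper \cite{Huang-escape} --- but for growth such as $e^{Cr^{\alpha}}$ with $1<\alpha\le 2$ one needs $\lambda\asymp r^{\alpha-1}\to\infty$ to kill the volume prefactor, and your Borel--Cantelli series diverges. The non-locality (jump size bounded only by $1$, not by something small) is a genuine obstruction here, not a technicality to be ``pushed through''.

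The paper's remedy is to pass to a modified graph $\g$ in which each edge $e$ is subdivided into $\nn(e)$ pieces, with $\nn(e)$ chosen large enough that the new adapted weight satisfies $\sigma\le\sigma_n\asymp\lp\log\mu_o(B_{d_{\sigma_o}}(\bar x,R_n))\rp^{-1}$ on the $n$-th annulus; this guarantees $\alpha_n\sigma_n\le 1$ for the optimal multiplier $\alpha_n$ and restores the Gaussian regime in the integral maximum principle on $\g$, producing an upper rate function for the modified chain $\xt$. The original chain $\lp X_t^o\rp_{t\ge 0}$ is then the trace of $\xt$ on $V_o$ via the time change $\tt_t$, and transferring the rate function back requires $\tt_t\le Ct$ eventually a.s.\ --- this does not follow from $C_o>0$ alone. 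The paper obtains it from a large-deviation bound $\PP_{\bar x}(\aa_t\le\varepsilon t)\le Ce^{-c_0 t}$ for the occupation time $\aa_t$ of $V_o$, proved by constructing an explicit bounded potential $\uu$ and supersolution $\varphi$ to a Schr\"odinger equation on $\g$ and applying the Feynman--Kac formula together with the minimality of the semigroup. Your cable-graph remark points in this direction, but Folz's comparison delivers only conservativeness; the quantitative occupation-time control is the principal new idea and is absent from your sketch.
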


\begin{rem}\rm{Combining (\ref{eq-assm-measure}) and (\ref{eq-escape-rate-volume-growth}), we see that balls in $\lp V_o, d_{\sigma_o}\rp$ have finite measure and are all finite.
Thus conservativeness follows from the existence of upper rate functions.
However, due to technical reasons that we will explain in Section 3, we have to prove conservativeness along the way of getting an upper rate function.
This explains the statement of the above theorem.

  The sharpness of the upper rate function $\psi^{-1}(t)$ has been analyzed in \cite{Huang-escape} through examples.
For a large family of symmetric weighted graphs, this is the optimal result up to the constant $c$ in (\ref{eq-escape-rate})
(see, e.g., \cite[Theorem 1.15 and Subsection 1.4]{Huang-escape}).}
\end{rem}

As a corollary of Theorem \ref{thm-main}, we obtain the following
as for Brownian motions on Riemmanian manifolds (see, e.g., \cite[Corollary 4.2]{Hsu-Qin}),
which refines the result in \cite[Example 1.17]{Huang-escape} for the exponential volume growth case.
\begin{cor}\label{cor-main}
For each volume growth condition as follows,
$\phi(t)$ is an upper rate function for $(X_t^0)_{t\geq 0}$.
\begin{enumerate}
\item $\mu(B_{d_{\sigma_o}}(\overline{x}, r))\leq Cr^D \ (D>0)$ and $\phi(t)=c\sqrt{t\log t}${\rm ;}
\item $\mu(B_{d_{\sigma_o}}(\overline{x}, r))\leq e^{Cr^{\alpha}} (0<\alpha<2)$ and $\phi(t)=ct^{1/(2-\alpha)}${\rm ;}
\item $\mu(B_{d_{\sigma_o}}(\overline{x}, r))\leq e^{Cr^2}$ and $\phi(t)=e^{ct}${\rm ;}
\item $\mu(B_{d_{\sigma_o}}(\overline{x}, r))\leq e^{Cr^2\log r}$ and $\phi(t)={\rm exp}({\rm exp}(ct))$.
\end{enumerate}
\end{cor}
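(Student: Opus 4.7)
The plan is to apply Theorem \ref{thm-main} directly: in each of the four cases I would first check that the volume growth condition (\ref{eq-escape-rate-volume-growth}) is satisfied, then estimate the function $\psi(R)$ defined in (\ref{eq-escape-rate}) asymptotically as $R\to\infty$, and finally invert to read off $\psi^{-1}(t)$. The integrability check is easy: in cases (1)--(3) the integrand $r/\log\mu(B_{d_{\sigma_o}}(\bar{x},r))$ is bounded below by $r/(C r^2)=1/(Cr)$ or larger, so its integral over $[1,\infty)$ diverges; and in the borderline case (4) it behaves like $1/(Cr\log r)$, whose integral still diverges. Hence Theorem \ref{thm-main} applies and $\psi^{-1}(t)$ is an upper rate function in each case.

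The main work is then reduced to elementary asymptotics of the defining integral. In every case the extra $\log\log r$ term in the denominator is either of strictly smaller order than $\log\mu(B_{d_{\sigma_o}}(\bar{x},r))$, or of the same order as $\log r$ in case (1); either way it is absorbed into the arbitrary constant $c$. Concretely, I would argue:

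\textbf{Case (1).} With $\log\mu(B_{d_{\sigma_o}}(\bar{x},r))\leq D\log r + O(1)$, the denominator in (\ref{eq-escape-rate}) is comparable to $\log r$, so $\psi(R)\asymp R^2/\log R$ for large $R$. Solving $t\asymp R^2/\log R$ gives $\log R\asymp \tfrac{1}{2}\log t$ and then $R\asymp \sqrt{t\log t}$, up to constants.

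\textbf{Case (2).} With $\log\mu(B_{d_{\sigma_o}}(\bar{x},r))\leq Cr^\alpha$ and $0<\alpha<2$, the denominator is comparable to $r^\alpha$, so
\[
\psi(R)\asymp \int^{R}\frac{r\,dr}{r^\alpha}\asymp R^{2-\alpha},
\]
and inversion gives $\psi^{-1}(t)\asymp t^{1/(2-\alpha)}$.

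\textbf{Case (3).} With $\log\mu(B_{d_{\sigma_o}}(\bar{x},r))\leq Cr^2$, the denominator is comparable to $r^2$, so $\psi(R)\asymp \int^R dr/r\asymp\log R$ and $\psi^{-1}(t)\asymp e^{ct}$.

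\textbf{Case (4).} With $\log\mu(B_{d_{\sigma_o}}(\bar{x},r))\leq Cr^2\log r$, the denominator is comparable to $r^2\log r$, so $\psi(R)\asymp \int^R dr/(r\log r)\asymp \log\log R$ and $\psi^{-1}(t)\asymp \exp(\exp(ct))$.

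Since Definition \ref{defi-upper-rate} is monotone in the candidate rate function -- any nonnegative increasing $\phi(t)$ which eventually dominates a known upper rate function is again one -- the asymptotic $\asymp$ statements above, combined with suitable adjustment of the constant $c$, yield the claimed $\phi(t)$ in each case. I do not foresee a substantive obstacle; the entire argument is just bookkeeping around the integral in (\ref{eq-escape-rate}), with the only mild subtlety being the handling of the $\log\log r$ term in case (1) where it is of the same order as $\log\mu(B_{d_{\sigma_o}}(\bar{x},r))$, but this only inflates $c$ by a bounded factor.
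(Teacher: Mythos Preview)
Your proposal is correct and matches the paper's approach: the corollary is stated without proof as an immediate consequence of Theorem~\ref{thm-main}, with a pointer to the identical computation in \cite[Corollary 4.2]{Hsu-Qin} for the manifold case, and your sketch supplies exactly those routine asymptotics. One small point of precision: since the hypotheses give only \emph{upper} bounds on $\log\mu(B_{d_{\sigma_o}}(\bar{x},r))$, you really obtain one-sided inequalities $\psi(R)\ge c'\,\tilde\psi(R)$ and hence $\psi^{-1}(t)\le \tilde\psi^{-1}(t/c')$ rather than two-sided $\asymp$; but this is precisely what is needed, and your closing monotonicity remark already accounts for it.
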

\bigskip

\subsection{Idea and approach}
The conservativeness part of Theorem \ref{thm-main} is not new.
Recently, Folz \cite{FolzSC} made a breakthrough on the problem of conservativeness of weighted graphs,
by proving an analogue of the volume growth criterion of Grigor'yan.
Our result on conservativeness comes with a different proof but in a slightly weaker form.

The upper rate function part of Theorem \ref{thm-main} greatly improves the work in \cite{Huang-escape}, and provides a full analogue with the manifold case.
In \cite{Huang-escape}, a partial result was proven under the restriction that the volume growth is at most exponential.
The restriction is removed in this article by two main new ingredients, which we explain below.


The first new ingredient is a certain variant of Folz's construction for conservativeness of weighted graphs.
Roughly speaking, for each weighted graph with an adapted metric, Folz constructed a corresponding metric graph.
Then comparison can be made between the lifetime of the Markov chain on the weighted graph and the diffusion on the metric graph.
The conservativeness of the metric graph in fact implies that of the weighted graph.
The volume growth criterion of conservativeness of weighted graphs then
follows from Sturm's work (\cite{Sturm94}) on strongly local Dirichlet forms, which applies to metric graphs.

We refine Folz's construction to get more accurate comparisons. Loosely speaking, for each weighted graph $\go$ with an adapted path metric $d_{\sigma_o}$,
we associate it with a new weighted graph $\g$ by adding new vertices to $V_o$ and modifying the weights, in such a way that the original process $\lp X_t^o\rp_{t\ge0}$
is the trace of the new process $\xt$ on $V_o$.
The novelty of our construction is that it allows a uniform quantitative control of the occupation time of $\xt$ on $V_o$.

More specifically, we will consider the following type modification of a weighted graph.
\begin{defi}[Modification of a weighted graph]\label{defi-modified-graph}
   Let $\go$ be a weighted graph with an adapted path metric $d_{\sigma_o}$. We fix an orientation of $E_o$,
by choosing $\iota: E_o\rightarrow \{\pm 1\}$ which satisfies $\iota(x,y)=-\iota(y,x)$ for $(x,y)\in E_o$, and letting $E_o^+=\iota^{-1}\{1\}$.
Let $\nn: E_o\rightarrow \NN_+$ be a symmetric function such that $\nn\ge 2$. We construct a weighted graph $\g$ as follows.
  \begin{enumerate}
    \item For each $e\in E_o^+$, associate a distinct set of $\nn(e)-1$ new points \[V_e=\{x^e_1, \cdots, x^e_{\nn(e)-1}\};\]
    \item for each $e=(x,y)\in E_o^+$, we change the edge $x\sim y$ to a sequence of new edges \[x=x_0^e\sim x_1^e \sim x^e_{\nn(e)-1}\sim x^e_{\nn(e)}=y;\]
    \item define the new weight function $w$ by $w(x_i^e, x_{i+1}^e)=\nn(e) w_o(e)$ for each $e=(x,y)\in E_o^+$, $0\le i\le \nn(e)-1$ and $w=0$ otherwise;
    \item define the new weight function $\mu$ by $\mu\vert_{V_o}=\mu_o$ and $\mu(x_i^e)= \frac{2w_o(e)\sigma_o(e)^2}{\nn(e)}$ for each $e=(x,y)\in E_o^+$, $1\le i\le \nn(e)-1$.
  \end{enumerate}
 Denote the new edge set obtained in (2), (3) by $E$.
We define a new weight $\sigma$ on $E$ by setting $\sigma(x_i^e, x_{i+1}^e)=\frac{\sigma_o(e)}{\nn(e)}$ for each $e=(x,y)\in E_o^+$, $0\le i\le \nn(e)-1$.
An easy calculation shows that $\sigma$ is an adapted weight for $\g$. We call such $\g$ the modified graph of $\go$ with weight $\nn$.
\end{defi}
\begin{rem}
  \rm{Although the above construction looks a bit complicated, the geometric and probabilistic considerations behind it are clear.
Intuitively, we are splitting each jump of the process into a number of jumps with smaller steps.
The geometric relations between $\go$ and $\g$ are shown in Lemma \ref{lem-geometry}.
Briefly speaking, the adapted path metric and the volume of balls of the new weighted graph are comparable with those of the original one.

We could also have worked with metric graphs instead. Basically the metric graphs
we should look at are the generalized ones with the measure as a certain linear combination
of Dirac measures on vertices and Lebesgue measures on edges. However,
we prefer the current approach, as we stay in the category of weighted graphs to have conceptual simplicity. }
\end{rem}\bigskip


Denote by $\lp X_{t}^o\rp_{t\ge 0}$ the Markov chain of $\go$ and $\lp X_{t}\rp_{t\ge 0}$
the Markov chain of $\g$. Let $\ll_t$ be the local time of $\xt$, namely, for $x\in V$,
 \[\ll_t (x)=\int_0^t \mathbf{1}_{\{x\}}(X_s)ds,\] which can be viewed as a random measure on $V$.
 Define a PCAF $\aa_t$ (positive continuous additive functional, cf. \cite{FOT}) for $\xt$ by $\aa_t=\ll_t (V_o)$, and its right inverse $\tt_t$
by \[\tt_t=\inf\{s>0: \aa_s>t\}.\] Fix a reference point $\bar{x}\in V_o\subset V$. By the general theory of PCAF,
we show in Proposition \ref{prop-trace} that $\lp X_t^o\rp_{t\ge 0}$ has the same law as $\lp X_{\tt_t}\rp_{t\ge 0}$ under $\PP_{\bar{x}}$.
The following theorem is the key to relate $\lp X_t^o\rp_{t\ge 0}$ and $\xt$. 
\begin{thm}
  \label{thm-local-time} Let $\go$ be a weighted graph with an adapted path metric $d_{\sigma_o}$.
  Let $\g$ be a modified weighted graph with weight $\nn$ as in Definition \ref{defi-modified-graph}. Furthermore, assume that $\g$ is conservative.
Then there exists a constant $C>1$, independent of $\go$ and $\nn$ such that for some random time $\tilde{T}$,
\[  \PP_{\bar{x}}\lp \tt_{t}\le C t, \text{~~for all~~} t\ge \tilde{T}\rp=1.\]
\end{thm}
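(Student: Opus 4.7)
The plan is to construct a bounded Lyapunov function $\phi:V\to[0,\infty)$ that forces a deterministic negative drift on the auxiliary vertices, and then use its Dynkin martingale to compare $\aa_t$ with $t-\aa_t$. Set $\phi\equiv 0$ on $V_o$ and, on each auxiliary chain $V_e=\{x_1^e,\dots,x_{\nn(e)-1}^e\}$ attached to $e\in E_o^+$, put
\[
\phi(x_i^e)\;=\;\frac{\sigma_o(e)^2\,i(\nn(e)-i)}{\nn(e)^2},\qquad 1\le i\le \nn(e)-1.
\]
Since $\sigma_o\le 1$, one has $0\le\phi\le\tfrac14$. Writing $\Delta\phi(x):=\frac{1}{\mu(x)}\sum_y w(x,y)(\phi(y)-\phi(x))$ for the generator of $\xt$, a direct calculation with the weights from Definition~\ref{defi-modified-graph} gives $\Delta\phi(x_i^e)=-1$ at every interior auxiliary vertex (the second difference of $i(\nn-i)/\nn^2$ equals $-2/\nn^2$, which cancels the Laplacian normalisation $\nn^2/(2\sigma_o^2)$), while at each $x\in V_o$
\[
\Delta\phi(x)\;=\;\frac{1}{\mu_o(x)}\sum_{e\ni x}w_o(e)\sigma_o(e)^2\,\frac{\nn(e)-1}{\nn(e)}\;\in\;[0,1]
\]
thanks to the adapted inequality $\sum_{e\ni x}w_o(e)\sigma_o(e)^2\le\mu_o(x)$. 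An analogous pointwise computation yields $\Gamma(\phi)(x):=\frac{1}{\mu(x)}\sum_y w(x,y)(\phi(y)-\phi(x))^2\le 1$ everywhere on $V$.

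Since $\g$ is assumed conservative and $\phi,\Delta\phi$ are bounded, Dynkin's formula gives a $\PP_{\bar x}$-martingale
\[
M_t\;:=\;\phi(X_t)-\phi(\bar x)-\int_0^t \Delta\phi(X_s)\,ds
\]
with predictable quadratic variation $\langle M\rangle_t=\int_0^t\Gamma(\phi)(X_s)\,ds\le t$. Splitting $\Delta\phi=h\cdot\mathbf 1_{V_o}-\mathbf 1_{V\setminus V_o}$ with $0\le h\le 1$, using $\phi(\bar x)=0$ and $\int_0^t h(X_s)\mathbf 1_{V_o}(X_s)\,ds\le\aa_t$, the Dynkin identity rearranges to
\[
t-\aa_t\;\le\;\aa_t+M_t,\qquad\text{equivalently}\quad\aa_t\;\ge\;\tfrac{1}{2}(t-M_t).
\]
Doob's $L^2$ inequality on the dyadic interval $[0,2^{n+1}]$ yields $\PP\bigl(\sup_{s\le 2^{n+1}}|M_s|>2^{3n/4}\bigr)\le 2^{n+1}/2^{3n/2}=O(2^{-n/2})$, so Borel--Cantelli produces a $\PP_{\bar x}$-a.s.\ finite random time $\tilde T$ with $|M_t|\le t/2$ for all $t\ge\tilde T$. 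For such $t$, $\aa_t\ge t/4$; since $\aa$ is continuous and $\tt_s\ge s$, inverting yields $\tt_s\le 4s$ for every $s\ge\tilde T$.

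The main subtlety is uniformity of the constant $C$ in $\go$ and $\nn$, and it is automatic in the approach above: the three key pointwise bounds $\|\phi\|_\infty\le 1/4$, $\|\Delta\phi\|_\infty\le 1$, and $\|\Gamma(\phi)\|_\infty\le 1$ invoke only the universal constraint $\sigma_o\le 1$ together with the adapted property, while the passage from $\langle M\rangle_t\le t$ to the almost-sure $o(t)$ bound on $M_t$ is insensitive to the underlying graph. Hence the same $C$ (here $C=4$ works) serves for every $\go$ and every weight $\nn$.
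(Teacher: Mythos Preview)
Your proof is correct and takes a genuinely different route from the paper's argument. The paper constructs a bounded potential $\uu$ (negative on $V\setminus V_o$, positive on $V_o$) together with a trigonometric supersolution $\varphi$ of the Schr\"odinger inequality $(\Delta+\uu)\varphi\ge 0$, then feeds this into the Feynman--Kac formula and the minimality of the resolvent to obtain an \emph{exponential} tail bound $\PP_x(\aa_t\le\varepsilon t+\delta)\le C\exp(-c_0 t)$; a Borel--Cantelli step on a lattice of times finishes. Your approach replaces the Schr\"odinger machinery by a direct Lyapunov/Dynkin-martingale argument: the quadratic profile $\phi(x_i^e)=\sigma_o(e)^2\,i(\nn(e)-i)/\nn(e)^2$ on each auxiliary chain gives exactly $\Delta\phi=-1$ off $V_o$ and $0\le\Delta\phi\le 1$ on $V_o$, and the bounded carr\'e du champ $\Gamma(\phi)\le 1$ yields $\langle M\rangle_t\le t$, so Doob's $L^2$ inequality plus Borel--Cantelli suffices. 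Your tail decay is only polynomial ($O(2^{-n/2})$), but that is enough for the theorem and yields the clean explicit constant $C=4$, whereas the paper's constant is any $C>1+C_2/C_1=1+M_1^3$. Conversely, the paper's exponential estimate (Proposition~\ref{prop-large-deviation}) is of independent large-deviation interest and is not recovered by your method. Two minor cosmetic points: the constant in Doob's maximal inequality contributes an extra factor $4$ to your displayed bound (harmless for summability), and the inversion step really yields $\tt_s\le 4s$ for all $s\ge\tilde T/4$; since $\tilde T/4\le\tilde T$ this is consistent with your statement but could be said more sharply.
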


Theorem \ref{thm-local-time} implies the following:
\begin{cor}\label{cor-local-time}
Assume the same condition as in Theorem {\rm \ref{thm-local-time}}.
If $R(t)$ is an upper rate function for $\xt$, then so is  $R(C t)$ for the original Markov chain $\lp X_t^o\rp_{t\ge 0}$,
where $C>1$ is the same constant as in Theorem {\rm \ref{thm-local-time}}.
\end{cor}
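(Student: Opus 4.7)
The plan is to transfer the upper rate bound from $\xt$ on $\g$ to $\lp X_t^o\rp_{t\ge 0}$ on $\go$ by combining three ingredients: the distributional identity $\lp X_t^o\rp_{t\ge 0}\stackrel{d}{=}\lp X_{\tt_t}\rp_{t\ge 0}$ under $\PP_{\bar{x}}$ given by Proposition \ref{prop-trace}, the linear time-change bound $\tt_t\le Ct$ from Theorem \ref{thm-local-time}, and the monotonicity of $R$.

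First I would record two elementary consequences of Definition \ref{defi-modified-graph}. Since each edge $e\in E_o^+$ is subdivided into $\nn(e)$ new edges of $\sigma$-length $\sigma_o(e)/\nn(e)$, which sum to $\sigma_o(e)$, any $E$-path connecting two vertices of $V_o$ projects to an $E_o$-path between the same endpoints of no greater $\sigma$-length, because any detour into the interior of a subdivision can only lengthen the path. Conversely every $E_o$-path lifts to an $E$-path of the same total length. Hence the restriction of $d_\sigma$ to $V_o\times V_o$ coincides with $d_{\sigma_o}$. Second, by the very definition of $\tt_t$ as the right inverse of $\aa_t=\ll_t(V_o)$, one has $X_{\tt_t}\in V_o$ whenever $\tt_t<\zeta$.

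To conclude, fix the $\PP_{\bar{x}}$-almost sure event on which both $\tt_t\le Ct$ for every $t\ge\tilde T$ and $d_\sigma(X_s,\bar{x})\le R(s)$ for every $T\le s<\zeta$, supplied respectively by Theorem \ref{thm-local-time} and the hypothesis that $R$ is an upper rate function for $\xt$. Because $\tt$ is nondecreasing, right-continuous, and satisfies $\tt_t<\zeta$ for all $t<\zeta^o$ with $\tt_t\uparrow\zeta$ as $t\uparrow\zeta^o$, one can select a random time $T'\ge\tilde T$ large enough that $\tt_{T'}\ge T$. Then for every $t$ with $T'\le t<\zeta^o$ we have $T\le\tt_t<\zeta$, and
\[d_{\sigma_o}(X_{\tt_t},\bar{x})=d_\sigma(X_{\tt_t},\bar{x})\le R(\tt_t)\le R(Ct),\]
using in the last step that $R$ is nondecreasing. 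Transferring this pathwise bound through the distributional identity of Proposition \ref{prop-trace} yields $d_{\sigma_o}(X_t^o,\bar{x})\le R(Ct)$ for all $T'\le t<\zeta^o$ almost surely, which is precisely the definition of $R(Ct)$ being an upper rate function for $\lp X_t^o\rp_{t\ge 0}$.

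The substantive work has been done in Theorem \ref{thm-local-time}; once the linear control on $\tt_t$ is in hand the corollary is essentially bookkeeping. The only delicate point is the identification $d_\sigma\vert_{V_o\times V_o}=d_{\sigma_o}$, which follows directly from the subdivision structure of Definition \ref{defi-modified-graph}.
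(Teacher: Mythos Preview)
Your proof is correct and is exactly the argument the paper has in mind; the paper does not spell out a proof of this corollary at all, treating it as immediate from Theorem \ref{thm-local-time}. Two minor bookkeeping remarks: the distributional identity $\lp X_t^o\rp_{t\ge 0}\stackrel{d}{=}\lp X_{\tt_t}\rp_{t\ge 0}$ is actually recorded in Corollary \ref{cor-trace} rather than Proposition \ref{prop-trace}, and the metric identification $d_\sigma\vert_{V_o\times V_o}=d_{\sigma_o}$ is already Lemma \ref{lem-geometry}(1), so you need not rederive it. Your choice of $T'$ works because $\aa_t\le t$ forces $\tt_t\ge t$, so taking $T'=\tilde T\vee T$ already guarantees $\tt_{T'}\ge T$; and since $\g$ is assumed conservative (hence $\zeta=\infty$, and $\zeta^o=\aa_\infty=\infty$ by Remark \ref{rem-SC}), the lifetime bookkeeping is trivial.
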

Through Corollary \ref{cor-local-time},
we can work with a modified graph $\g$ to obtain an upper rate function for the original one $\go$.
It is crucial to have the constant $C$ being absolute in Theorem \ref{thm-local-time}.

To show Theorem \ref{thm-local-time} we need a large deviation type argument, which is the second new ingredient of our approach.
Proposition \ref{prop-large-deviation}, stated in a slightly weaker way, says that
\[\limsup_{t\rightarrow \infty}\frac{1}{t}\log \PP_{\bar{x}}\lp \aa_t \le \varepsilon t\rp<0,\]
which has the form of the upper bound in the large deviation principle (cf. 
\cite{Donsker-Varadhan}, 
\cite{Deuschel-Stroock}, 
\cite{Jain-Krylov}, 
\cite{Kim-Takeda-Ying}). 


Let $\ppp(V)$ be the space of probability measures over $V$, equipped with the weak topology.
Then by definition, the inequality   $\aa_t\leq \varepsilon t$ is
equivalent to saying that  the random probability measure  $\ll_t/t$ belongs to the closed subset
$\{\nu\in \ppp(V): \nu (V_{})\le \varepsilon\}$. 
However, without certain tightness condition, the upper bound applies only to compact subsets of $\ppp(V)$ in general.
Furthermore, in the generality of our setting,
we can not expect any type of tightness or the compactness of  $\{\nu\in \ppp(V): \nu (V_{})\le \varepsilon\}$.

We will develop a new way to achieve Proposition \ref{prop-large-deviation},
by directly constructing a potential $\uu$ on $V$ and a solution $\varphi$ to the corresponding Schr\"{o}dinger equation.
Roughly speaking, the sign of the potential $\uu$ naturally tells
which part of a weighted graph is visited by the Markov chain for a positive portion of time.
The minimality of the heat semigroup associated with $\g$ and the Feynman-Kac formula altogether give the desired estimate.
The construction of $\uu$ and $\varphi$ heavily depends on the structure of the modified graph.

The rest of our approach basically follows the line of argument in \cite{Gri-Hsu},
but applied to a modified weighted graph $\g$ with a carefully chosen weight $\nn$.
We use a Borel-Cantelli type argument to reduce the problem to the point-wise a priori estimate of the solution
to a heat equation on a sequence of balls.

The advantage to work with the modified graph $\g$ is that neighboring points become closer
in $d_{\sigma}$, the new adapted path metric.
Indeed, the modified weighted graph will be ``designed" in advance such that
the distance between a pair of neighboring points ``shrinks"
when the pair is far from the reference point (Lemma \ref{lem-shrinking}).
By this fact, we can construct more refined auxiliary functions in (\ref{def-aux}) than those in \cite[Subsection 5.2]{Huang-escape}.
In such a way, the technique of a priori estimate from the classical PDE works well and gives the expected result.
It should be noted that the nonlocal nature of the problem causes the lack of a chain rule and is the main obstruction in the analysis of \cite{Huang-escape}.

The assumption (\ref{eq-assm-measure}) that there is a positive lower bound on $\mu_o$ avoids the need for Sobolev inequalities in the manifold setting (cf. \cite{Gri-Hsu}) and
is important for the estimates. The subtle point is that this assumption is not preserved under the modification.
We overcome this issue again by making advantage of the structure of the modification.

The rest of this paper is organized as follows.
In Section \ref{sect-settings}, we recall the results we need from Dirichlet form theory and the related stochastic calculus.
The main technical tool, Theorem \ref{thm-local-time}, is proven in Section \ref{sect-comparison-local-time}
using the strategy of Schr\"{o}dinger equation we described before.
Several basic comparisons between weighted graphs with its modifications are also presented there.
The proof of the main result, Theorem \ref{thm-main} will be then accomplished in Section \ref{sect-main-proof}.
In  Section \ref{sect-examples}, we apply Theorem \ref{thm-main} to several weighted graphs
and obtain the upper rate functions explicitly.

We end this introduction by a few words on notations.
The letters $c$ and  $C$ (with subscript) denote finite positive constants
which may vary from place to place. If we indicate the dependence of the constant
explicitly, we write $c=c(u)$ for instance. For nonnegative functions $f(x)$ and
$g(x)$ on a space $S$, we write $f(x)\asymp g(x)$ if there exist positive constants $c_1$ and
$c_2$ such that
$$c_1 g(x)\leq f(x)\leq c_2g(x) \quad \text{for any} \ x\in S.$$

\section{Settings}\label{sect-settings}

Most of the materials in this subsection are standard. We recall the results that we need from the theory of Dirichlet forms and stochastic calculus by additive functionals.
We will follow \cite{FOT} for the general theory, and 
\cite{KL} for an analytic framework for weighted graphs with potentials.
\subsection{Analytic side}
As before, we consider a simple weighted graph $\g$ together with an adapted path metric $d_{\sigma}$.
We assume that balls in $(V, d_{\sigma})$ are all finite.
Note that all functions on $(V,d_{\sigma})$ are then in fact in the space $C(V)$, i.e. continuous. 
 We adopt the notations $\ccv$, $C_+(V)$ and $C_b(V)$ to denote the space of finitely supported functions,
the space of nonnegative functions and the space of bounded functions, respectively.
Furthermore, we consider a function $\vv\in C_+(V)$ as a potential function 
  (i.e. $\vv \cdot\mu$ as a killing measure). 
Define a positive definite, symmetric bilinear form $\lp \eee^{\vv}, \ccv\rp$ through
\[\eee^{\vv}(u,u)=\frac{1}{2}\sum_{x\in V}\sum_{y\in V} \omega(x,y)\lp u(x)-u(y)\rp^2+\sum_{x\in V} {\vv}(x)u(x)^2 \mu(x),\]
where $u\in\ccv$.
This is a closable form, and we consider its closure $ (\eee^{\vv}, \fff^{\vv})$ under $\eee_1^{\vv}=\eee^{\vv}+\parallel \cdot\parallel_{\ltwo}^2$,
in the maximal domain
\[\fff^{\vv}_{\max}=\left\{u\in L^2\lp V, \mu\rp: \sum_{x\in V}\sum_{y\in V} \omega(x,y)\lp u(x)-u(y)\rp^2 +\sum_{x\in V} {\vv}(x)u(x)^2<\infty\right\}.\]

In \cite{KL}, the regularity of the Dirichlet form $(\eee^{\vv}, \fff^{\vv})$ is shown, and the generator $\lll^{\vv}$
(we choose it to be positive definite, opposite to many authors) is determined
to be a restriction to $\ddd(\lll^{\vv})$ of the so-called formal Laplacian $\Delta^{\vv}$, which takes the form:
\begin{equation}
\label{eq-formal-lap}
\Delta^{\vv} u(x)=\lap(u(x)-u(y))+{\vv}(x)u(x), \forall x\in V.
\end{equation}

Corresponding to the self-adjoint operator $\lll^{\vv}$, there are a semigroup $\{P_t^{\vv}\}_{t\ge 0}$
and a resolvent $\{R_{\alpha}^{\vv}\}_{\alpha>0}$ on $\ltwo$, which can be understood as
\[P_t^{\vv}= \exp\lp -t \lll^{\vv}\rp, ~~~~R_{\alpha}^{\vv}=\lp \alpha +\lll^{\vv}\rp^{-1},\]
through functional calculus. They have the Markov property, and can be extended from $\ltwo$ to $C_+(V)$. Indeed, for any $u\in C_+(V)$,
choosing a sequence $\{u_n\}_{n\in \NN}\subset \ltwo$ with $0\le u_n\le u_{n+1}$ and $u_n\rightarrow u$ in the point-wise sense,
we can define \[P_t^{\vv} u(x)= \lim_{n\rightarrow \infty} P_t^{\vv} u_n (x), ~~~~R_{\alpha}^{\vv} u(x)= \lim_{n\rightarrow \infty} R_{\alpha}^{\vv} u_n (x), ~~\forall x\in V.\]
Both limits do not depend on the choice of the sequence $\{u_n\}_{n\in \NN}\subset \ltwo$, though we allow $\infty$ value in the limit.
The following result is extracted from \cite{KL} (Theorem 11 (b)) which states the minimality of the resolvent.

\begin{thm}[Keller and Lenz]
\label{thm-KL-minimality} Let $f\in C_+(V)$ and $\alpha>0$. The following two statements are equivalent:
  \begin{enumerate}
  \item There exists $g\in C_+(V)$ such that $\lp \Delta^{\vv} +\alpha\rp g\ge f$;
  \item $R_{\alpha}^{\vv} f (x)$ is finite for any $x\in V$.
  \end{enumerate}
  In this case, $R_{\alpha}^{\vv} f$ is the smallest nonnegative function $g$ with $\lp \Delta^{\vv} +\alpha\rp g\ge f$,
and it satisfies \[\lp\Delta^{\vv}+\alpha\rp R_{\alpha}^{\vv} f= f.\] 
\end{thm}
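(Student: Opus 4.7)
The plan is to prove the two implications separately, obtaining both the finite-value characterization and the minimality as a by-product of the argument for $(1)\Rightarrow(2)$.

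First, I would address $(2)\Rightarrow(1)$ together with the identity $(\Delta^{\vv}+\alpha)R_{\alpha}^{\vv}f=f$ by monotone approximation from $L^{2}$. Take any increasing sequence $\{f_n\}\subset C_c(V)\cap C_+(V)$ with $f_n\uparrow f$; then $f_n\in L^{2}(V,\mu)$ and $u_n:=R_{\alpha}^{\vv}f_n\in\ddd(\lll^{\vv})$ solves $(\lll^{\vv}+\alpha)u_n=f_n$. Since $\lll^{\vv}$ is the restriction of the formal Laplacian $\Delta^{\vv}$ to its domain, this is equivalently the pointwise equation $(\Delta^{\vv}+\alpha)u_n=f_n$. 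By positivity-preservation of $R_{\alpha}^{\vv}$ (a consequence of the Markov property), $u_n\uparrow R_{\alpha}^{\vv}f$ pointwise, and under the hypothesis (2) the limit is finite at every $x\in V$. Using monotone convergence for each of the two nonnegative pieces $\frac{1}{\mu(x)}\sum_{y}w(x,y)u_n(y)$ and $\frac{1}{\mu(x)}\sum_{y}w(x,y)u_n(x)$ separately, followed by taking the difference (now legitimate because both limits are finite), yields $(\Delta^{\vv}+\alpha)R_{\alpha}^{\vv}f=f$, so $g=R_{\alpha}^{\vv}f$ witnesses (1).

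For $(1)\Rightarrow(2)$ and minimality, I would use a Dirichlet-resolvent exhaustion combined with a discrete maximum principle. Fix an increasing sequence of finite sets $U_n\uparrow V$ and let $R_{\alpha}^{\vv,U_n}$ denote the resolvent of the part Dirichlet form on $U_n$ (equivalently, $R_{\alpha}^{\vv,U_n}f$ is the unique nonnegative function supported in $U_n$ satisfying $(\Delta^{\vv}+\alpha)R_{\alpha}^{\vv,U_n}f=f$ on $U_n$). Standard Dirichlet form / minimal semigroup theory gives $R_{\alpha}^{\vv,U_n}f\uparrow R_{\alpha}^{\vv}f$ pointwise. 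Given $g\in C_+(V)$ with $(\Delta^{\vv}+\alpha)g\geq f$ on $V$, set $\varphi_n:=R_{\alpha}^{\vv,U_n}f-g$. On $U_n$ one computes
\[
(\Delta^{\vv}+\alpha)\varphi_n=f-(\Delta^{\vv}+\alpha)g\leq 0,
\]
while on $V\setminus U_n$ one has $\varphi_n=-g\leq 0$. If $\varphi_n$ attained a strictly positive maximum at some $x_0\in U_n$, then $\sum_{y}w(x_0,y)(\varphi_n(x_0)-\varphi_n(y))\geq 0$, $\vv(x_0)\varphi_n(x_0)\geq 0$, and $\alpha\varphi_n(x_0)>0$, contradicting $(\Delta^{\vv}+\alpha)\varphi_n(x_0)\leq 0$. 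Hence $\varphi_n\leq 0$, i.e.\ $R_{\alpha}^{\vv,U_n}f\leq g$. Letting $n\to\infty$ gives $R_{\alpha}^{\vv}f\leq g$, which is finite everywhere and also establishes minimality.

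The delicate point, and what I expect to be the main obstacle, is the justification of the two convergence steps: (a) the identification $R_{\alpha}^{\vv,U_n}f\uparrow R_{\alpha}^{\vv}f$ for the extension of the resolvent to $C_+(V)$, which requires reconciling the analytic definition (via functional calculus on $L^2$) with the pointwise definition via monotone approximation, and (b) passing to the limit inside $\Delta^{\vv}$ in Step 1 despite the appearance of a potentially divergent sum $\sum_{y}w(x,y)u_n(y)$. The sign-splitting described above handles (b) once one knows $R_{\alpha}^{\vv}f(x)<\infty$; for (a), one argues that the Dirichlet forms of the parts on $U_n$ are monotone and Mosco-converge (or, more directly, that their resolvents are a monotonically increasing sequence of positive operators whose $L^2$-limit coincides with $R_{\alpha}^{\vv}$ on $L^2\cap C_+(V)$, which then extends to all of $C_+(V)$ by monotone convergence). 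Once these two convergence issues are handled, the maximum principle step is elementary and the remainder of the theorem is immediate.
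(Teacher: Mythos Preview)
The paper does not prove this theorem at all; it is quoted verbatim from Keller and Lenz (\cite{KL}, Theorem~11(b)) and stated without proof. There is therefore nothing in the paper to compare your proposal against.

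That said, your outline is essentially the standard argument and is sound. One simplification: your concern~(b) about passing the limit through $\Delta^{\vv}$ is unnecessary in this paper's setting. The paper restricts throughout to \emph{simple} weighted graphs, which are in particular locally finite, so for each fixed $x$ the sum $\sum_{y}w(x,y)u_n(y)$ contains only finitely many nonzero terms and the interchange of limit and sum is trivial. (In the original Keller--Lenz framework graphs need not be locally finite, which is why the issue arises there.) The only genuinely nontrivial step in your sketch is~(a), the monotone convergence $R_{\alpha}^{\vv,U_n}\uparrow R_{\alpha}^{\vv}$ of the Dirichlet resolvents along an exhaustion; this is standard and is precisely part of what the cited reference supplies.
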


 When considering the original Dirichlet form $\lp \eee, \fff\rp$ of $\g$ (i.e. ${\vv}\equiv 0$),
we simply denote the corresponding quantities by omitting the superscript $\vv$.
The following lemma relates the Dirichlet forms $( \eee, \fff)$ and $( \eee^{\vv}, \fff^{\vv})$.
 \begin{lem}\label{lem-killed-form-domain}
   Let $\g$ be a simple weighted graph and ${\vv}\in C_+(V)$. Then
   \[\fff^{\vv}=\fff\cap L^2\lp V, {\vv} \cdot\mu\rp.\]
 \end{lem}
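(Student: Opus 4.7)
The forward inclusion $\fff^{\vv} \subseteq \fff \cap L^2(V, \vv \cdot \mu)$ will follow at once from the obvious bounds $\eee_1(v) \le \eee_1^{\vv}(v)$ and $\|v\|_{L^2(V, \vv \cdot \mu)}^2 \le \eee_1^{\vv}(v)$. Indeed, any $u \in \fff^{\vv}$ is an $\eee_1^{\vv}$-limit of some $u_n \in \ccv$, and the sequence $\{u_n\}$ is then simultaneously Cauchy in $(\fff, \eee_1)$ and in $L^2(V, \vv \cdot \mu)$; extracting a subsequence converging pointwise (which is possible since every vertex has positive $\mu$-mass), both limits must agree with $u$, placing $u$ in the intersection.

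For the reverse inclusion, I plan a three-step reduction to approximate a given $u \in \fff \cap L^2(V, \vv \cdot \mu)$ by $\ccv$ functions in $\eee_1^{\vv}$. First, since $u^{\pm} \in \fff$ by the Markovian property of $\eee$ and $|u^{\pm}| \le |u|$ so that $u^{\pm} \in L^2(V, \vv \cdot \mu)$, I may assume $u \ge 0$. Second, the truncation $u^N := u \wedge N$ converges to $u$ in $\eee_1^{\vv}$: the energy part tends to zero by dominated convergence on pairs with majorant $w(x,y)(u(x) - u(y))^2$, which is summable since $u \in \fff$; the potential part tends to zero by dominated convergence with majorant $u^2 \vv \in L^1(V, \mu)$. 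Thus I may further assume $u$ is bounded. Third, regularity of $(\eee, \fff)$ supplies $v_n \in \ccv$ with $v_n \to u$ in $\eee_1$, and truncating $\tilde v_n := (v_n \wedge \|u\|_\infty) \vee (-\|u\|_\infty)$ keeps $\tilde v_n \in \ccv$ and preserves $\eee_1$-convergence by Markovianity.

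The main obstacle will lie in step three: upgrading $\eee_1$-convergence of the bounded approximants to $\eee_1^{\vv}$-convergence, equivalently showing $\tilde v_n \to u$ also in $L^2(V, \vv \cdot \mu)$. When $\vv$ is bounded this is immediate, since the $L^2(V, \vv \cdot \mu)$-norm is then dominated by the $L^2(V, \mu)$-norm. For possibly unbounded $\vv$, I plan a diagonal argument on the truncated potentials $\vv_M := \vv \wedge M$: for each $M$ the bounded-potential case produces $v_n^M \in \ccv$ with $v_n^M \to u$ in $\eee_1^{\vv_M}$ while $|v_n^M| \le \|u\|_\infty$, and the residual $\int (\vv - \vv_M)(v_n^M - u)^2 \, d\mu$ is dispatched by first sending $M \to \infty$ to eliminate the $u$-dominated part (using $u \in L^2(V, \vv \cdot \mu)$) and then choosing $n = n(M)$ so that the contribution from the finite support of $v_n^M$ is negligible. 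A more conceptual alternative, bypassing this diagonal, is to invoke the general perturbation-by-smooth-measure theorem for regular Dirichlet forms from \cite{FOT}: since the discrete topology on $V$ precludes polar sets of positive capacity, the Radon measure $\vv \cdot \mu$ is automatically smooth, and that theorem identifies the closure of $\eee^{\vv}$ on $\ccv$ with $\fff \cap L^2(V, \vv \cdot \mu)$ directly.
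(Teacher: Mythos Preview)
Your closing ``conceptual alternative'' is exactly the paper's proof: the paper simply invokes Theorem~6.1.2 of \cite{FOT}, noting that $\ccv$ is a special standard core for $(\eee,\fff)$ and that $\vv\cdot\mu$ is a positive Radon measure charging no set of zero capacity, to conclude that $(\eee^{\vv},\fff\cap L^2(V,\vv\cdot\mu))$ is regular with the same core. That is the entire argument.

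Your hands-on route is a genuine elementary alternative, and the forward inclusion together with steps one and two are fine. The diagonal argument in step three, however, does not close as written. Having fixed $M$ and obtained $v_n^M\to u$ in $\eee_1^{\vv_M}$ with $|v_n^M|\le\|u\|_\infty$, you cannot force $\int(\vv-\vv_M)(v_n^M)^2\,d\mu$ to be small by choosing $n$: the support of $v_n^M$, though finite, typically grows with $n$ and may meet $\{\vv>M\}$ at vertices where $\vv$ is arbitrarily large, and convergence in $\eee_1^{\vv_M}$ carries no information about $\vv-\vv_M$ there. A clean repair that avoids the diagonal entirely is to sharpen the truncation: set $w_n:=(v_n\wedge u)\vee 0\in\ccv$. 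Writing $v_n\wedge u=u-(u-v_n)_+$ and using that normal contractions do not increase $\eee$, one checks $w_n\to u$ in $\eee_1$ (hence pointwise, since each vertex has positive $\mu$-mass). Now $0\le w_n\le u$ gives $(w_n-u)^2\le u^2\in L^1(V,\vv\cdot\mu)$, and dominated convergence yields $w_n\to u$ in $L^2(V,\vv\cdot\mu)$ directly, with no need to truncate $\vv$ at all.
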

\begin{proof}
Define  $\tilde{\fff}^{\vv}:=\fff\cap L^2\lp V, {\vv} \cdot\mu\rp (\subseteq \fff^{\vv}_{\max})$. Then by
Theorem 6.1.2 in \cite{FOT}, $( \eee^{\vv}, \tilde{\fff}^{\vv})$ is a regular Dirichlet form with $\ccv$ as a special standard core,
since $\ccv$ is a special standard core for $\lp \eee, \fff\rp$.
By definition, $\fff^{\vv}$ is the closure of $\ccv$ in $\fff^{\vv}_{\max}$, with respect to the $\eee_1^{\vv}$-norm and hence $\fff^{\vv}=\tilde{\fff}^{\vv}$.
\end{proof}

\subsection{Probabilistic side}
Fix a simple weighted graph $\g$ with the associated regular Dirichlet form $\lp \eee, \fff\rp$.
The general theory of Dirichlet forms (\cite{FOT}) guarantees the existence and uniqueness (up to equivalence of processes)
of a corresponding Hunt process \[\mm=\lp \Omega,  \xt, \{\PP_x\}_{x\in V\cup\{\infty\}}, \{\ff_t\}_{t\ge 0}, \infty, \zeta\rp. \]
Here the filtration $\{\ff_t\}_{t\ge 0}$ is assumed to be the minimum completed admissible one.
The symbol $\infty$ here stands for the cemetery point
and $\zeta=\inf\{t>0, X_t=\infty\}$ is the lifetime.
Every function $u$ on $V$ is automatically extended to $V_{\infty}$ by setting $u(\infty)=0$.
The process constructed in this way is the same as the direct construction using Markov chain theory, as mentioned before.
The relation between the probabilistic theory and the Dirichlet form theory can be seen from the semigroup:
\[P_t u(x)= \EE_x [u(X_t)], \quad u\in C_b(V)\cap C_+(V),\]
where $u(\infty)$ is defined to be $0$.

As already mentioned, conservativeness can be formulated in terms of lifetime. The irreducibility of $\xt$ follows from the connectedness of $\ve$.
\begin{defi} Let $\g$ be a simple weighted graph with Markov chain $\mm$.
  The Markov chain $\mm$ is said to be explosive if for some/all $x\in V$,
  \[\PP_x(\zeta<\infty)>0.\] 
  Otherwise, $\xt$ {\rm (}or $\g${\rm )} is called conservative.
\end{defi}

Now we make some remarks on the relation between conservativeness and existence of an upper rate function. From the construction of $\xt$ through the $Q$-matrix (cf. \cite{Norris}, Section 2.6),
the lifetime $\zeta$ is the first time that $\xt$ performs infinitely many jumps (cf. \cite{Norris}).
Consider a finite set $K\subset V$ and define $\tau_K=\inf\{t>0 : X_t\in V\backslash K\}$ as the exit time of $K$. We can see that $\PP_x(\tau_K <\zeta)=1$ for any $x\in V$,
since $K$ is finite  and $\mm$ has no killing inside.
And we have the following: 

\begin{lem}\label{lem-ur-SC}
  Let $\g$ be a simple weighted graph with a metric $d$ which induces the discrete topology.
Fix a reference point $\bar{x}$. If all balls in $(V, d)$ are finite,
then the existence of an upper rate function $R(t)$ in $d$ implies the conservativeness
for the corresponding minimal Markov chain $\xt$.
\end{lem}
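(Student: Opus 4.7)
The plan is to argue by contradiction using the sequence of first-exit times from the balls $B_d(\bar{x}, n)$.

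Suppose $\PP_{\bar{x}}(\zeta < \infty) > 0$. For each $n \in \NN$ put $K_n := B_d(\bar{x}, n)$, which is finite by assumption, and let $\tau_n := \inf\{t > 0 : X_t \notin K_n\}$. A standard property of the minimal Markov chain associated with a locally finite $Q$-matrix is that $\tau_n \uparrow \zeta$ almost surely. Moreover, on $\{\zeta < \infty\}$ one has $\tau_n < \zeta$ strictly for every $n$: otherwise the chain would remain in the finite set $K_n$ throughout $[0, \zeta)$, so by pigeonhole some $y \in K_n$ must be visited by the embedded jump chain infinitely often, producing an infinite sum of i.i.d.\ exponential holding times at $y$; but this sum is bounded by $\zeta < \infty$, a contradiction.

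Next I would combine this with the upper rate assumption. Outside a $\PP_{\bar{x}}$-null event there exists a random $T < \zeta$ with $d(X_t, \bar{x}) \le R(t)$ for all $T \le t < \zeta$. Since $\tau_n \uparrow \zeta$ and $T < \zeta$, we have $\tau_n \ge T$ for all sufficiently large $n$ on $\{\zeta < \infty\}$. The chain exits $K_n$ only by a jump, so $X_{\tau_n} \notin K_n$ and $d(X_{\tau_n}, \bar{x}) > n$. Applying the upper rate bound at $t = \tau_n \in [T, \zeta)$ then yields
\[
n < d(X_{\tau_n}, \bar{x}) \le R(\tau_n) \le R(\zeta),
\]
where the last inequality uses monotonicity of $R$ together with $\tau_n \le \zeta$. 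But on $\{\zeta < \infty\}$ the value $R(\zeta)$ is a finite real number, whereas the previous inequality forces $R(\zeta) > n$ for all large $n$. This contradiction gives $\PP_{\bar{x}}(\zeta < \infty) = 0$. The irreducibility of $\xt$, which follows from the connectedness of $\ve$, then upgrades this to $\PP_x(\zeta = \infty) = 1$ for every $x \in V$, i.e.\ conservativeness.

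The main technical point is the strict inequality $\tau_n < \zeta$ on $\{\zeta < \infty\}$, which is precisely where the minimality of the chain enters, through the a.s.\ divergence of an infinite sum of i.i.d.\ exponential holding times at any fixed state of $K_n$. Once this is secured, the rest of the proof is a short consistency check between the upper rate bound evaluated at the exit times $\tau_n$ and the finiteness of $R$ on $[0, \infty)$.
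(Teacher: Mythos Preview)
Your argument is correct and complete. It differs from the paper's proof in the mechanism used to derive the contradiction. The paper fixes $t_0$ with $\PP_{\bar{x}}(\zeta\le t_0)>0$, observes that the upper rate bound forces $X_t\in B_d(\bar{x},R(t_0))$ for $t\in[T,\zeta)$, hence the left limit $X_{\zeta-}$ lies in this finite (and therefore closed) ball; this is then confronted with the general fact (Lemma~4.5.2 in \cite{FOT}) that $X_{\zeta-}=\infty$ almost surely on $\{\zeta<\infty\}$ when there is no killing. You instead track the exit times $\tau_n$ of the balls $K_n=B_d(\bar{x},n)$: on $\{\zeta<\infty\}$ these satisfy $T\le\tau_n<\zeta$ for large $n$, so the upper rate bound at $\tau_n$ gives $n<R(\tau_n)\le R(\zeta)<\infty$, which is absurd. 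Your route is more elementary in that it avoids the cited result on $X_{\zeta-}$ and stays entirely within the Markov-chain picture; the paper's route is shorter once that lemma is available. Note also that the strict inequality $\tau_K<\zeta$ for finite $K$, which you single out as the main technical point, is already recorded in the paragraph immediately preceding the lemma, so you could simply quote it rather than re-derive it via the i.i.d.\ holding-time argument.
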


\begin{proof}
Assume the contrary, that is, $\PP_{\bar{x}}(\zeta<\infty)>0$.
Then there exists some $t_0>0$ such that $\PP_{\bar{x}}(\zeta\le t_0)>0$.
By Definition \ref{defi-upper-rate}, conditioning on the event $\{\zeta\le t_0\}$,
we have $X_{\zeta-}\in B_{d}(\bar{x}, R(t_0))$ almost surely,
since $B_{d}(\bar{x}, R(t_0))$ is finite.
However, by Lemma 4.5.2 in \cite{FOT}, we have that $X_{\zeta-}=\infty$
with probability $1$ conditioning on the event $\{\zeta<\infty\}$,
since there is no presence of killing measure. This leads to a contradiction.
\end{proof}

Three types of operations on a Hunt process are important to us: killing by a PCAF,
killing
at the
exit time,
and time change by a PCAF (cf. \cite{FOT}, chapters 4, 5, 6). In this section, we recall some basic facts about the former two operations.
The third one will be discussed in Section \ref{sect-comparison-local-time}.

Let ${\vv}\in C_+(V)$. It is clear that ${\vv} \cdot\mu$ is a positive Radon measure charging no sets of zero capacity.
We can also show that ${\vv} \cdot\mu$ is the Revuz measure corresponding
to the PCAF $\int_{0}^t {\vv}(X_s)ds$ by checking  $(5.1.13)$ in \cite{FOT}.
Moreover, combining Theorem 6.1.1 of \cite{FOT} and Lemma \ref{lem-killed-form-domain},
we have the following Feynman-Kac type formula for the semigroup $\lp P_t^{\vv}\rp_{t\ge0}$:
\begin{equation}
  \label{eq-Feynman-Kac}
  P_t^{\vv} f(x)= \EE_x \left[f(X_t)\exp\lp -\int_{0}^t {\vv}(X_s)ds\rp\right],
\end{equation}
for any $x\in V, f\in C_+(V)$.

Now more generally consider $\uu\in C_b(V)$ (not necessarily non-negative), and let $C>0$ be a constant such that $\uu+C\ge0$.
Consider the semigroup $P_t^{\uu+C}$ as defined above for the non-negative potential $\uu +C$. For $f\in C_b(V)\cap C_+(V)$, we define
 \[\tilde{P}_t^{\uu}
f(x):=\EE_x \left[f(X_t)\exp\lp -\int_{0}^t {\uu}(X_s)ds\rp \right].\]
By (\ref{eq-Feynman-Kac}), it is then direct to see that
\begin{equation}\label{eq-semigroups}
\tilde{P}_t^{\uu}
f=\exp(Ct)P_t^{\uu+C}f
\end{equation}
for any $f\in C_b(V)\cap C_+(V)$.
A combination of Theorem \ref{thm-KL-minimality} and the Feynman-Kac formula leads to the following result,
which will be crucial in the proof of Theorem \ref{thm-local-time}.

\begin{prop}
  \label{prop-minimality} Let $\uu\in C_b(V)$. If $\varphi\in C_b(V)\cap C_+(V)$ satisfies
  \[\lp\Delta+\uu\rp\varphi\ge 0,\]
then $\tilde{P}_t^{\uu} \varphi\leq  \varphi$.
\end{prop}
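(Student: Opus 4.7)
The plan is to reduce to the case of a nonnegative potential, where Theorem \ref{thm-KL-minimality} applies directly, and then to transfer a resolvent estimate into a semigroup estimate via the Hille--Yosida exponential formula. Since $\uu\in C_b(V)$, pick a constant $C\ge 0$ with $\vv:=\uu+C\in C_+(V)$. By (\ref{eq-semigroups}), $\tilde P_t^{\uu}\varphi=e^{Ct}P_t^{\vv}\varphi$, so the claim $\tilde P_t^{\uu}\varphi\le\varphi$ is equivalent to
\[
P_t^{\vv}\varphi\le e^{-Ct}\varphi.
\]

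From $(\Delta+\uu)\varphi\ge 0$ and $\varphi\ge 0$ one gets, for every $\alpha>0$,
\[
(\Delta^{\vv}+\alpha)\varphi=(\Delta+\uu)\varphi+(C+\alpha)\varphi\ \ge\ (C+\alpha)\varphi.
\]
Applying Theorem \ref{thm-KL-minimality} with $g=\varphi\in C_+(V)$ and $f=(C+\alpha)\varphi\in C_+(V)$ yields $R_\alpha^{\vv}\bigl((C+\alpha)\varphi\bigr)\le\varphi$, i.e.
\[
R_\alpha^{\vv}\varphi\ \le\ \frac{\varphi}{C+\alpha}.
\]
The resolvent is moreover monotone on $C_+(V)$: if $0\le f_1\le f_2$ then $(\Delta^{\vv}+\alpha)R_\alpha^{\vv}f_2=f_2\ge f_1$, and the minimality clause of Theorem \ref{thm-KL-minimality} forces $R_\alpha^{\vv}f_1\le R_\alpha^{\vv}f_2$. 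Iterating the previous bound therefore gives
\[
(R_\alpha^{\vv})^n\varphi\ \le\ \frac{\varphi}{(C+\alpha)^n},\qquad n\ge 1.
\]

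With $\alpha=n/t$, the post-Widder / Hille--Yosida exponential formula
\[
P_t^{\vv}\varphi(x)=\lim_{n\to\infty}\Bigl(\frac{n}{t}\Bigr)^{n}\bigl(R_{n/t}^{\vv}\bigr)^{n}\varphi(x)
\]
combined with the displayed bound produces
\[
P_t^{\vv}\varphi(x)\ \le\ \varphi(x)\lim_{n\to\infty}\Bigl(1+\tfrac{Ct}{n}\Bigr)^{-n}=e^{-Ct}\varphi(x),
\]
and one more application of (\ref{eq-semigroups}) finishes the proof.

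The main technical obstacle is justifying the pointwise exponential formula on $C_+(V)$: $P_t^{\vv}$ and $R_\alpha^{\vv}$ are originally defined by functional calculus on $L^2(V,\mu)$ and only extended to $C_+(V)$ through monotone limits of $\ccv$-approximants. The cleanest route is to establish the formula first on $\ccv\subset L^2(V,\mu)$, where it is classical for strongly continuous contraction semigroups, and then lift it to arbitrary $\varphi\in C_b(V)\cap C_+(V)$ via the monotone approximation $\varphi_k=\varphi\cdot\mathbf{1}_{B_k}\uparrow\varphi$ along an exhaustion of $V$ by finite sets, using the monotonicity of $R_\alpha^{\vv}$ and $P_t^{\vv}$ to interchange the limits. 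A purely probabilistic alternative is to verify directly that $M_s:=\varphi(X_s)\exp(-\int_0^{s}\uu(X_r)\,dr)$ is a $\PP_x$-supermartingale on $[0,\zeta)$ by an optional-stopping argument along an exhaustion of $V$ by finite subsets, where the Markov-chain generator identity on each finite set yields the supermartingale property, and the uniform bounds on $\varphi$ and $\uu$ permit passage to the limit by dominated convergence.
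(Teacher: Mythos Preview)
Your proof is correct and follows essentially the same route as the paper: reduce to a nonnegative potential $\vv=\uu+C$, derive the resolvent bound $R_\alpha^{\vv}\varphi\le\varphi/(C+\alpha)$ from Theorem~\ref{thm-KL-minimality}, iterate it, and convert to a semigroup bound via an approximation formula, handling the passage from $L^2$ to $C_+(V)$ by monotone approximants $\varphi_k\uparrow\varphi$. The only cosmetic difference is the choice of approximation formula: the paper uses the Yosida-type identity $P_t^{\vv}\varphi_m=\lim_{\alpha\to\infty}e^{-\alpha t}\sum_{n\ge0}\frac{(t\alpha)^n}{n!}(\alpha R_\alpha^{\vv})^n\varphi_m$ from \cite[(1.3.5)]{FOT}, whereas you invoke the post--Widder formula $P_t^{\vv}\varphi_k=\lim_{n\to\infty}(n/t)^n(R_{n/t}^{\vv})^n\varphi_k$; both are standard on $L^2$ and lead to the same bound after the same iteration and the same monotone-limit argument.
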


\begin{proof}
  Let $C>0$ be such that $\uu+C\ge0$. Then $\varphi$ satisfies that
  \[\lp\Delta+\uu+C+\alpha\rp\varphi\ge (C+\alpha)\varphi,\]
  for any $\alpha>0$.
Hence by applying
Theorem \ref{thm-KL-minimality} to the potential $\uu+C$, we have
\[(C+\alpha)R_{\alpha}^{\uu+C}
\varphi\le \varphi.\]

Let $\{\varphi_n\}_{n\in\NN}\subseteq C_+(V)\cap \ltwo$ be an increasing sequence such that
$\lim_{n\rightarrow\infty} \varphi_n=\varphi$ in point-wise sense.
By the definition of $R_{\alpha}^{\uu+C}$ and $P_t^{\uu+C}$ on $C_+(V)$, we have
   \[R_{\alpha}^{\uu+C}\varphi_n\le R_{\alpha}^{\uu+C}\varphi \le \frac{1}{C+\alpha}\varphi; \quad \lim_{n\rightarrow\infty}P_t^{\uu+C} \varphi_n=P_t^{\uu+C}\varphi.\]
   By a classical formula (cf. \cite{FOT}, (1.3.5)),
   \begin{align*}
     P_t^{\uu+C} \varphi_m &=\lim_{\alpha\rightarrow\infty}\exp(-\alpha t)\sum_{n=0}^{\infty}\frac{(t\alpha)^n}{n!}(\alpha R_{\alpha}^{\uu+C})^n \varphi_m \\
&\le\lim_{\alpha\rightarrow\infty}\exp(-\alpha t)\sum_{n=0}^{\infty}\frac{(t\alpha)^n}{n!}\alpha^n \left(R_{\alpha}^{\uu+C}\right)^n \varphi\\
&\le\lim_{\alpha\rightarrow\infty}\exp(-\alpha t)\sum_{n=0}^{\infty}\frac{(t\alpha)^n}{n!}\left(\frac{\alpha}{C+\alpha}\right)^n \varphi
     \\&=\exp(-Ct)\varphi
   \end{align*}
  for any $m\in \NN$.
By letting $m\rightarrow\infty$, we obtain
  \[P_t^{\uu+C} \varphi\le \exp(-Ct)\varphi\]
and the
assertion follows from (\ref{eq-semigroups}).
\end{proof}
Let $K\subseteq V$ be a finite subset. We define the graph theoretical closure $\cl(K)$ of $K$ by
\[\cl(K)=\{x\in V: x\in K, \text{~~or~~}\exists y\in K, \text{~~s.t.~~} x\sim y \}.\]
We call $\partial K=\cl(K)\backslash K$ the (outer) boundary of $K$.
The interior $\inte(K)$ of $K$ is defined to be the largest subset $L\subseteq K$ with $\cl(L)\subseteq K$.
This is well defined since $\cl(L_1)\cup \cl(L_2)=\cl(L_1\cup L_2)$ for all finite $L_1, L_2\subseteq V$.
Note that $\cl(\inte(K))$ may not be equal to $K$.

We can define the part (or restriction) $(\eee^K, \fff^K)$ of $(\eee, \fff)$ on $K$ by
\[\fff^K=\{u\in \fff: u\vert_{K^c}\equiv 0\}, ~~~~\eee^K(u,u)=\eee(u,u), \forall u\in \fff^K.\]
It is direct to see that $\fff^K=C(K)$ since $K$ is finite. The corresponding semigroup $P_t^K$ satisfies that
\[P_t^K u(x)=\EE_x[u(X_t)\textbf{1}_{t<\tau_K}],~~~~\forall u\in C(K), x\in K.\]

The generator of $P_t^K$ is given by
\[\Delta^K u(x)=\lap u(x)-\frac{1}{\mu(x)}\sum_{y\in K}w(x,y) u(y),~~~~\forall u\in C(K), x\in K.\]
Given a function $u\in C(K)$, we see that
\[\Delta^K u(x)=\Delta u(x),~~~~\forall x\in \inte(K).\]
The following result is standard (for our purpose, it was treated in \cite{Huang-escape}, Proposition 2.8).
\begin{prop}
  \label{prop-heat-solution}Let $\g$ be a simple weighted graph.
   Let $K\subseteq V$ be finite. Define a function $u$ on $K\times[0,\infty)$ 
to be
\[u(x,t)=\PP_x \lp \tau_{K}\leq t\rp.\]
Then $u(x,t)$ is differentiable in $t$ on $[0,\infty)$ and satisfies
\[        \ddt u(x,t)+\Delta u(x,t)=0, \]
for all $x\in \inte(K)$ and $t\geq 0$ with initial condition $u(\cdot,0)\equiv0$ on $K$.
\end{prop}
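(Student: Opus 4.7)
The plan is to pass to the survival probability $v(x,t) := 1 - u(x,t) = \PP_x\lp \tau_K > t\rp$ for $x \in K$, and identify it with the part semigroup $P_t^K$ recalled just above the statement. Since on the event $\{t<\tau_K\}$ the process has not left $K$ by time $t$, applying $P_t^K f(x) = \EE_x[f(X_t)\textbf{1}_{t<\tau_K}]$ with $f \equiv \textbf{1}_K$ gives
\[ v(\cdot, t) = P_t^K \textbf{1}_K \quad \text{on } K.\]

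Because $K$ is finite, $\fff^K = C(K)$ is finite-dimensional and $P_t^K$ is literally the matrix exponential $\exp(-t\Delta^K)$. This yields $C^{\infty}$ dependence of $v(x,t)$ on $t \in [0,\infty)$, together with the identity $\ddt v(x,t) = -\Delta^K v(\cdot,t)(x)$ for all $x \in K$ and all $t \ge 0$, with no analytic work needed. The initial condition $u(\cdot,0) \equiv 0$ is immediate: for $x \in K$ the chain has a strictly positive holding time at $x$ under $\PP_x$, so $\tau_K > 0$ almost surely, and hence $u(x,0) = \PP_x(\tau_K \le 0) = 0$.

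It then remains to transfer the equation to $u$ at interior vertices $x \in \inte(K)$. By the definition of $\inte(K)$, every neighbor of such an $x$ already lies in $K$, so in the explicit formula for $\Delta^K$ the sum over $K$ coincides with the sum over $V$ (terms with $y \notin K$ vanish because $w(x,y)=0$), giving $\Delta^K v(x,t) = \Delta v(x,t)$ with the values of $v$ outside $K$ never entering. Writing $v = 1 - u$ on $K$ and using that $\Delta$ annihilates constants, we get $\Delta v(x,t) = -\Delta u(x,t)$. Combined with the previous paragraph this delivers $\ddt u(x,t) + \Delta u(x,t) = 0$. I anticipate no serious obstacle: the whole argument is a translation between semigroup and PDE language on a finite state space, and the only real care needed is the bookkeeping that distinguishes interior from boundary vertices of $K$ (this is precisely why the PDE is only asserted on $\inte(K)$, not on all of $K$).
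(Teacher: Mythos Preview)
Your argument is correct and is precisely the approach the paper indicates: it does not give a proof but only cites \cite{Huang-escape} and remarks that $u(x,t)=1-P_t^K\mathbf{1}(x)$, which is exactly your identification $v=P_t^K\mathbf{1}_K$. The remaining details you supply (matrix exponential on the finite-dimensional space $C(K)$, the identity $\Delta^K=\Delta$ on $\inte(K)$, and the positive holding time for the initial condition) are the expected ones.
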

\begin{rem}
 \rm{Note that $u(x,t)=\PP_x \lp \tau_{K}\leq t\rp =1-P_t^K \textbf{1}(x)$.}
\end{rem}\bigskip

\subsection{Integral maximum principle}
The technical tool to estimate the solution $u$ in Proposition \ref{prop-heat-solution} is the so-called integral maximum principle.
This kind of technique is classical in the parabolic PDE theory and dates at least back to Aronson \cite{ARO67}.
See also Grigor'yan \cite{Gri-integrated} for the manifold setting. In the present context, it is developed in \cite{Huang-thesis}.
We state it here with a slight modification to be compatible with our notations.
\begin{prop}[Integral maximum principle]\label{prop-integral-maximum-principle}
Let $\g$ be a simple weighted graph.
Let $L\subseteq V$ be a finite subset of $V$. Let $K\subseteq \inte(L)$ be nonempty.

 Fix some $T>0$.  Let
$u(x,t)$ be a function on $L\times [0, T]$ that is
differentiable in $t$ on $[0,T]$ and $u(x,0)\equiv 0$. Assume further that $u(x,t)$ solves the heat equation
\begin{equation}\label{equation-localized-heat-equation}
\frac{\partial}{\partial t}u(x,t)+\Delta u(x,t)=0,
\end{equation}
on $K\times [0, T]$.

Take two auxiliary functions $\eta(x)$ on
$L$ and $\xi (x, t)$ on $L\times [0,T]$ such that
\begin{enumerate}
\item the function $\eta(x)\geq 0$ is finitely supported and ${\rm{supp}} \eta\subseteq K$;

\item $\xi (x, t)$ is continuously differentiable in $t$ on $[0,T]$
for each $x\in L$;

\item the inequality
$$(\eta^2 (x)-\eta^2 (y))(e^{\xi(x,t)}-e^{\xi(y,t)})\geq 0$$
holds for all $x\sim y, x,y\in L$ and $t\in [0,T]$;

\item the inequality $$\mu(x)\frac{\partial}{\partial t}\xi(x,t)+\frac{1}{2}\sum_{y\in L}w(x,y)(1-e^{\xi(y,t)-\xi(x,t)})^2 \leq 0$$
holds for any $x\in L$ and $t\in [0,T]$.
\end{enumerate}
Then for any $s\in(0,T]$, we have the following estimate:
\begin{equation}
\sum_{x\in
K}u^2(x,s)\eta^2(x)e^{\xi(x,s)}\mu(x)\leq2\int_{0}^{s}\sum_{x\in
L}\sum_{y\in
L}w(x,y)(\eta(x)-\eta(y))^2u^2(y,t)e^{\xi(x,t)}dt.
\end{equation}

\end{prop}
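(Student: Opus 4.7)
The plan is a discrete analogue of the Aronson--Grigor'yan integrated maximum principle. Set
$$J(t) := \sum_{x\in L} u^2(x,t)\eta^2(x) e^{\xi(x,t)}\mu(x),$$
which is a finite sum since $\supp \eta\subseteq K$ is finite, and note $J(0)=0$. The claim is equivalent to the differential inequality
$$J'(t)\leq 2\sum_{x,y\in L} w(x,y)(\eta(x)-\eta(y))^2 u^2(y,t) e^{\xi(x,t)}, \qquad t\in(0,T],$$
which upon integration on $[0,s]$ yields the result.

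I would first differentiate $J$. Since $\partial_t u=-\Delta u$ holds on $\supp \eta\subseteq K$,
$$J'(t) = -2\sum_{x\in L} u\Delta u\,\eta^2 e^{\xi}\mu + \sum_{x\in L} u^2\eta^2 e^{\xi}\partial_t\xi\,\mu.$$
The function $\phi:=\eta^2 e^{\xi}$ is supported in $K\subseteq\inte(L)$, so summation by parts applied to the Laplacian term produces, after symmetrization in $x,y$,
$$-2\sum_{x\in L} u\Delta u\,\phi\mu = -\sum_{x,y\in L}w(x,y)\bigl(u(x)\phi(x)-u(y)\phi(y)\bigr)\bigl(u(x)-u(y)\bigr).$$
The discrete Leibniz identity decomposes each edge term into
$$\tfrac{1}{2}(\phi(x)+\phi(y))(u(x)-u(y))^2 + \tfrac{1}{2}(u(x)^2-u(y)^2)(\phi(x)-\phi(y)),$$
the first producing a strictly negative energy term and the second an indefinite cross term.

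Next I would split $\phi(x)-\phi(y)$ into an $e^{\xi}$-difference part and an $\eta$-difference part, and apply Young's inequality $2|AB|\le\alpha^{-1}A^2+\alpha B^2$ to each part of the cross term with a suitable choice of $\alpha$. The $e^{\xi}$-part, paired against the negative energy, leaves a residual proportional to $w(x,y)u^2(x)\eta^2(x)(e^{\xi(x)}-e^{\xi(y)})^2/e^{\xi(x)}$. This is exactly what hypothesis (4), tested against $u^2\eta^2 e^{\xi}\mu$, dominates, using the identity $(1-e^{\xi(y)-\xi(x)})^2 = (e^{\xi(x)}-e^{\xi(y)})^2/e^{2\xi(x)}$. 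The $\eta$-part, expanded via $\eta^2(x)-\eta^2(y)=(\eta(x)-\eta(y))(\eta(x)+\eta(y))$, produces the target $2\sum w(x,y)(\eta(x)-\eta(y))^2 u^2(y) e^{\xi(x)}$ after a second Young/Cauchy--Schwarz step and symmetrization. Condition (3) ensures that $(\eta^2(x)-\eta^2(y))(e^{\xi(x)}-e^{\xi(y)})\ge 0$, fixing the sign of the cross-contribution between the two pieces favorably so that no extra correction is required.

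The main obstacle is the delicate combinatorial bookkeeping forced by the lack of a chain rule on graphs: all Young estimates must produce exactly the non-symmetric discrete gradient $(1-e^{\xi(y)-\xi(x)})^2$ that appears in hypothesis (4), and no other. Hypothesis (4) is rigid in this respect -- any softer form of the bound on $\partial_t\xi$ would leave uncontrollable residuals from the Young decomposition. Once the matching of terms is arranged, all remaining steps are routine finite-sum computations.
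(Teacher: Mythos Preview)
The paper does not supply its own proof of this proposition; it merely cites \cite{Huang-thesis} for the argument and states the result ``with a slight modification to be compatible with our notations.'' Your outline is exactly the standard Aronson--Grigor'yan energy method that this citation refers to: differentiate the weighted $L^2$-quantity $J(t)=\sum u^2\eta^2 e^{\xi}\mu$, integrate by parts using $\partial_t u=-\Delta u$ on $\supp\eta\subseteq K\subseteq\inte(L)$, apply the discrete Leibniz identity, and balance the cross terms against the negative energy via Young's inequality so that the residual matches hypothesis (4). The use of condition (3) to control the sign of the mixed $(\eta^2(x)-\eta^2(y))(e^{\xi(x)}-e^{\xi(y)})$ contribution is also standard in this line of argument.

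Your sketch is structurally correct and there is no meaningful deviation from the approach the paper intends; the only caveat is that, as you note, the bookkeeping in the Young-inequality step must be done carefully so that the residual is precisely $\sum_y w(x,y)(1-e^{\xi(y)-\xi(x)})^2$ with the correct constant, but this is routine once the decomposition is set up as you describe.
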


\section{Comparison of upper rate functions}\label{sect-comparison-local-time}
Fix a simple weighted graph $\go$ with an adapted path metric $d_{\sigma_o}$.
Let $\g$ be the modified weighted graph for weight $\nn$ as in Definition \ref{defi-modified-graph}.
It is direct to see that the underlying graph $(V, E)$ of $\g$ is simple.
Recall that $d_{\sigma}$ is the adapted path metric on $\g$ as constructed in Definition \ref{defi-modified-graph}.

\subsection{Basic properties of the modified weighted graph}
 We state some basic properties of $\g$ and its relation with the original graph $\go$.

\begin{lem}
  \label{lem-geometry}The following relations hold.
  \begin{enumerate}
    \item The metric $d_{\sigma}$ satisfies that $d_{\sigma}\vert_{V_o\times V_o}=d_{\sigma_o}$.
    \item Fix $x_0\in V_o$. The measure of a ball $B_{d_{\sigma_o}}(x_0, r)$ as a subset of $V_o$ and the measure of $B_{d_{\sigma}}(x_0, r)$ satisfies:
        \[\mu_o(B_{d_{\sigma_o}}(x_0, r))\le \mu(B_{d_{\sigma}}(x_0, r))\le 3\mu_o(B_{d_{\sigma_o}}(x_0, r)).\]
  \end{enumerate}
\end{lem}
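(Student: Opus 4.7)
The plan is to prove the metric identity (1) directly from the combinatorial structure of the modification, and then derive the volume comparison (2) from (1) together with the adaptedness of $\sigma_o$.

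For the easy direction $d_\sigma \le d_{\sigma_o}$ in (1), I would lift any path $x = y_0 \sim y_1 \sim \cdots \sim y_n = y$ in $\go$ to a path in $\g$ by replacing each edge $(y_i, y_{i+1})$ with its subdivision chain; since each new edge carries weight $\sigma_o(y_i, y_{i+1})/\nn(y_i, y_{i+1})$ and there are $\nn(y_i, y_{i+1})$ of them, the total $\sigma$-length of the lifted path equals $\sum_i \sigma_o(y_i, y_{i+1})$, and taking the infimum gives the bound. For the reverse inequality, given a path $\pi$ in $\g$ from $x$ to $y$ in $V_o$, I would mark its visits to $V_o$. The key structural fact is that the sets $V_e$ are pairwise disjoint and each $V_e$ connects to the rest of $V$ only through the two endpoints of $e$; consequently, between consecutive $V_o$-visits the path is trapped in a single set $V_e \cup \{x_e, y_e\}$, so the two consecutive visited vertices both lie in $\{x_e, y_e\}$. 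If they coincide, the sub-path is a loop and we drop it; otherwise the crossing has $\sigma$-length $\ge \sigma_o(e)$. The projected sequence is then a walk in $\go$ whose $d_{\sigma_o}$-length is at most the $\sigma$-length of $\pi$, and taking the infimum gives $d_{\sigma_o} \le d_\sigma$.

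For part (2), the lower bound is immediate: by (1), $B_{d_\sigma}(x_0, r) \cap V_o = B_{d_{\sigma_o}}(x_0, r)$, and $\mu$ agrees with $\mu_o$ on $V_o$. For the upper bound I would split $B_{d_\sigma}(x_0, r)$ into the $V_o$-part, contributing exactly $\mu_o(B_{d_{\sigma_o}}(x_0, r))$, and its intersection with each chain $V_e$. A shortest path from $x_0 \in V_o$ to any inner vertex $x_i^e$ must enter the chain through one of $x_e, y_e$, so whenever $V_e \cap B_{d_\sigma}(x_0, r) \ne \emptyset$, at least one of $x_e, y_e$ lies in $B_o := B_{d_{\sigma_o}}(x_0, r)$. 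The total $\mu$-mass of $V_e$ is $(\nn(e) - 1) \cdot 2 w_o(e)\sigma_o(e)^2/\nn(e) \le 2 w_o(e)\sigma_o(e)^2$. Summing over such chains and bounding by contributions at endpoints in $B_o$ (each edge being counted at most twice) gives
$$\sum_{e:\, V_e \cap B_{d_\sigma}(x_0,r) \ne \emptyset} 2 w_o(e)\sigma_o(e)^2 \;\le\; 2\sum_{z \in B_o}\sum_{y \sim z} w_o(z, y)\sigma_o(z, y)^2 \;\le\; 2\sum_{z \in B_o} \mu_o(z) \;=\; 2\mu_o(B_o),$$
by the adapted condition on $\sigma_o$. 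Combining with the $V_o$ contribution yields $3\mu_o(B_{d_{\sigma_o}}(x_0, r))$.

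The main subtlety is the projection step in part (1): once one observes that the sets $V_e$ are internally disjoint and interface with $V_o$ only at the endpoints of $e$, verifying that each sub-path between consecutive $V_o$-visits is either a discardable loop or crosses the full chain reduces to straightforward combinatorics, and the factor $2$ in the definition of $\mu$ on inner vertices is precisely what calibrates with the adapted condition to give the constant $3$.
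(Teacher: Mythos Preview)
Your proof is correct and follows essentially the same approach as the paper. For (1), you make explicit the two directions and handle loops/backtracking carefully, whereas the paper observes in one stroke that any path in $\g$ between points of $V_o$ projects to a path in $\go$ of the same length (glossing over the loop-dropping, which only affects an inequality that goes the right way); for (2), the paper attributes each chain $V_e$ to a vertex $x\in B_{d_{\sigma_o}}(x_0,r)$ via $V_x=\bigcup_{e\ni x}V_e$ and uses $\mu(V_x)\le 2\mu_o(x)$, which is exactly your edge-sum argument reorganized by endpoint.
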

\begin{proof}
(1) \ Note that for $x,y\in V_o\subseteq V$, a path of edges in $E$ connecting $x$ and $y$ necessarily has the following form
\[x=x_0\sim \cdots \sim x_i^{e_0}\sim \cdots \sim x_1\sim \cdots \sim x_{n-1}\sim \cdots\sim x_{k}^{e_{n-1}} \sim \cdots\sim x_{n}= y,\]
where $x_0, \cdots x_{n}\in V_o$ and $e_0,\cdots e_{n-1}\in E_o$. The length of such a path with respect to $\sigma$ is the same of the length in $\sigma_o$ of the path
$x_0\sim \cdots \sim x_k\sim \cdots\sim x_{n}$, understood in $(V_o, E_o)$.
By the definition of the path metric,
(1) holds.

\noindent
(2) \ It follows that $B_{d_{\sigma_o}}(x_0, r)=B_{d_{\sigma}}(x_0, r)\cap V_o$.
Since $\mu\vert_{V_o}=\mu_o$, we have
\[\mu_o(B_{d_{\sigma_o}}(x_0, r))\le \mu(B_{d_{\sigma}}(x_0, r)).\]

Consider $x\in V_o$ with $\deg(x)=n$. Let $e_1, \cdots, e_n\in E_o$ be the edges with $x$ as a vertex and set $V_x=\bigcup_{k=1}^{n}V_{e_k}$.
Since
$$\mu(x_i^e)= \frac{2w_o(e)\sigma_o(e)^2}{\nn(e)}$$ for $e\in E_o$, $1\le i\le \nn(e)-1$,
we have
\[\mu(V_x)=\sum_{k=1}^{n}\sum_{i=1}^{\nn(e_k)-1}\mu(x_i^{e_k})\le \sum_{k=1}^{n} 2w_o(e_k)\sigma_o(e_k)^2\le 2\mu_o(x)\]
by the adapted-ness of $\sigma_o$. The last inequality follows by observing that
\[B_{d_{\sigma}}(x_0, r)\subseteq B_{d_{\sigma_o}}(x_0, r)\cup \bigcup_{x\in B_{d_{\sigma_o}}(x_0, r)}V_x.\]
\end{proof}
\begin{rem}\rm
(i) \  If any ball in $(V_o, d_{\sigma_o})$ is finite,
then so is that in $(V, d_{\sigma})$ by (1).

\noindent
(ii) \  Suppose that $\go$ satisfies the volume growth condition (\ref{eq-escape-rate-volume-growth}) with $d_{\sigma_o}$.
Then by (2), so does $\g$ with $d_{\sigma}$.  Furthermore, by setting
$$\psi_o(R)=\int_{\hat{R}}^{R}\frac{r dr}{\log\mu_o\lp B_{d_{\sigma_o}}(\bar{x},r)\rp}
\quad \hbox{and} \quad  \psi(R)=\int_{\hat{R}}^{R}\frac{r dr}{\log\mu\lp B_{d_{\sigma}}(\bar{x},r)\rp},$$
we have $\psi\le \psi_o\le C\psi$ for some $C>1$.
\end{rem}\bigskip

\begin{prop}\label{prop-trace}
The Dirichlet form $(\eee_o, \fff_o)$ of the original weighted graph $\go$ is the trace of $(\eee, \fff)$
with respect to the Revuz measure $\mu_o=\textbf{1}_{V_o}\cdot\mu$.
\end{prop}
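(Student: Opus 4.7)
The plan is to apply the standard characterization of the trace of a regular Dirichlet form via harmonic extension (Theorem 6.2.1 in \cite{FOT}) and then compute everything explicitly on each ``chain'' added by the modification. First I would verify that $\mu_o=\mathbf{1}_{V_o}\cdot\mu$ is a smooth Radon measure for $(\eee,\fff)$: since $\ccv$ is a special standard core, every singleton in $V$ has positive capacity, so $\mu_o$ charges no set of zero capacity; and because the graph topology on $V$ is discrete, the quasi-support of $\mu_o$ is exactly $V_o$. The trace Dirichlet form $(\check\eee,\check\fff)$ therefore lives on $L^2(V_o,\mu_o)$ and is characterized by
\[
\check\eee(u,u)=\eee(H_{V_o}u,H_{V_o}u),\qquad u\in\check\fff,
\]
where $H_{V_o}u(z)=\EE_z[u(X_{T_{V_o}})]$ is the $V_o$-harmonic extension of $u$, i.e. $\Delta(H_{V_o}u)=0$ on $V\setminus V_o$ with boundary values $u$ on $V_o$.

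The next step is to compute $H_{V_o}u$ explicitly using the structure of Definition \ref{defi-modified-graph}. The complement $V\setminus V_o$ is a disjoint union of finite chains $V_e=\{x_1^e,\ldots,x_{\nn(e)-1}^e\}$, one for each $e=(x,y)\in E_o^+$, with all interior edge weights equal to $\nn(e) w_o(e)$. The discrete Laplace equation on such a chain with prescribed endpoint values $u(x)$ and $u(y)$ is one-dimensional, and its solution is the linear interpolation
\[
H_{V_o}u(x_i^e)=\lp 1-\frac{i}{\nn(e)}\rp u(x)+\frac{i}{\nn(e)}\,u(y),\qquad 0\le i\le \nn(e).
\]
The contribution of this chain to $2\eee(H_{V_o}u,H_{V_o}u)$ is then
\[
\sum_{i=0}^{\nn(e)-1}2\nn(e) w_o(e)\cdot\frac{(u(x)-u(y))^2}{\nn(e)^2}=2 w_o(e)\,(u(x)-u(y))^2,
\]
which is precisely the contribution of the original edge $e$ to $2\eee_o(u,u)$ (equivalently, the series combination of $\nn(e)$ conductors each of strength $\nn(e)w_o(e)$ has effective conductance $w_o(e)$). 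Summing over $e\in E_o^+$ gives $\eee(H_{V_o}u,H_{V_o}u)=\eee_o(u,u)$.

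For the domains, if $u\in\fff_o$ then $H_{V_o}u$ has finite $\eee$-energy by the above and is in $L^2(V,\mu)$ on each chain (finite sums with bounded data), so $u\in\check\fff$; conversely if $u\in\check\fff$ then $\eee_o(u,u)=\eee(H_{V_o}u,H_{V_o}u)<\infty$, so $u\in\fff_o$. Hence $\check\fff=\fff_o$ and $\check\eee=\eee_o$. The main subtlety I anticipate is justifying that the chain-by-chain harmonic extension lies in the extended Dirichlet space $\fff_e$ globally (and not just gives the pointwise right-hand side), but this is handled by first working with $u\in\ccv\cap\fff_o$ (where $H_{V_o}u$ differs from a finitely supported function only on finitely many chains), verifying the energy identity there, and then passing to the limit in the $\eee_1$-norm using the core property of $\ccv$ for $(\eee_o,\fff_o)$.
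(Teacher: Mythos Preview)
Your proposal is correct and follows essentially the same route as the paper: identify the trace form via the harmonic extension $H_{V_o}$, compute $H_{V_o}u$ on each chain $V_e$ as the linear interpolation $(1-i/\nn(e))u(x)+(i/\nn(e))u(y)$, and then sum the edge contributions to recover $\eee_o(u,u)$. The only notable difference is in the domain identification: the paper invokes \cite[Theorem 6.2.1(iii)]{FOT} to conclude directly that $C_c(V)\vert_{V_o}=C_c(V_o)$ is a core of the trace form, whence $\breve\fff=\fff_o$, whereas you propose a hands-on approximation argument; the core argument is cleaner and spares you the subtlety you flag about $H_{V_o}u\in\fff_e$ and the reverse inclusion $\breve\fff\subseteq\fff_o$.
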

\begin{proof}
We denote by $\lp\breve{\eee}, \breve{\fff}\rp$ the trace of $(\eee, \fff)$ for the Revuz measure $\mu_o$.
In other words,
\begin{equation} \label{eq:1}
\left\{ \begin{aligned}
         \breve{\fff} &= \{u\in L^2(V_o,\mu_o) : u=v \vert_{ V_o} \text{~~for some~~} v\in \fff_e\} \\
               \breve{\eee}(u, u)   &= \eee(H_{V_o}v, H_{V_o}v ), u\in \breve{\fff}, v\in \fff_e, u=v\vert_{V_o}.
                          \end{aligned} \right\}
                          \end{equation}
(see \cite[Section 6.2]{FOT}). Here $(\fff_e, \eee)$
is the extended Dirichlet space of $(\eee, \fff)$ (see \cite[p.41]{FOT} for definition).
For $v\in \fff_e$, $H_{V_o}v$ is defined by
\[H_{V_o}v(x)=\EE_x[v(X_{\varsigma_{V_o}});\varsigma_{V_o}<\infty],\]
where $\varsigma_{V_o}=\inf\{t>0:X_t\in V_o\}$ is the hitting time of $V_o$.
Let $u=v\vert_{V_o}$. Then it is clear that $H_{V_o}v\vert_{V_o}=u$.

Now fix $z\in V_e\subseteq V\backslash V_o$ for some $e=(x,y)\in E^+_o$.
Since
\[\PP_z(\varsigma_{V_o}<\infty,X_{\varsigma_{V_o}}\in\{x,y\} )=1,
\]
we
have by the strong Markov property,
\begin{align*}
 H_{V_o}v(z)&= \EE_z[v(X_{\varsigma_{V_o}});\varsigma_{V_o}<\infty] \\
  &=u(x)\PP_z(X_{\varsigma_{V_o}}=x)+u(y)\PP_z(X_{\varsigma_{V_o}}=y).
\end{align*}
Again by the strong Markov property, the function $f_1(k):=\PP_{x_k^e}(X_{\varsigma_{V_o}}=x)$
satisfies
\[f_1(k)=\frac{f_1(k-1)+f_1(k+1)}{2}, \] 
for $1\le k\le \nn(e)-1$ with $f_1(0)=1$, $f_1(\nn (e))=0$.
A similar relation also holds for $f_2(k):=\PP_{x_k^e}(X_{\varsigma_{V_o}}=y)$ with $f_2(0)=0$, $f_2(\nn (e))=1$.
Since these relations imply that
$$f_1(k)=1-\frac{k}{\nn(e)}, \quad f_2(k)=\frac{k}{{\nn(e)}}$$
we obtain for $z=x_k^e$,
\begin{equation}\label{eq-harmonic}
H_{V_o}v(z)=\left(1-\frac{k}{\nn(e)}\right)u(x)+\frac{k}{\nn(e)}u(y).
\end{equation}

By the definition of the weighted graph (Definition \ref{defi-modified-graph}),
for any $v\in \fff_e$ with $u=v\vert_{V_o}\in  L^2(V_o,\mu_o)$,
\begin{align*}
\breve{\eee}(u,u)&=  \eee(H_{V_o}v, H_{V_o}v )
  \\&=\frac{1}{2}\sum_{x\in V}\sum_{y\in V} w(x,y)\lp H_{V_o}v(x)-H_{V_o}v(y)\rp^2\\
  &=\sum_{e=(x,y)\in E_o^+}\sum_{k=0}^{\nn(e)-1}w_o(x,y)\nn(e)\lp H_{V_o}v(x_k^e)-H_{V_o}v(x_{k+1}^e)\rp^2.
\end{align*}
Then by (\ref{eq-harmonic}), the right hand side above is equal to
\begin{align*}
\sum_{e=(x,y)\in E_o^+}\sum_{k=0}^{\nn(e)-1}w_o(x,y)\nn(e)\lp \frac{u(x)-u(y)}{\nn(e)}\rp^2
&=\sum_{e=(x,y)\in E_o^+} w_o(x,y)(u(x)-u(y))^2\\
&=\eee_o(u,u).
\end{align*}
Moreover,
since $C_c(V)$ is a special standard core of $\lp \eee, \fff\rp$,
it follows by  Theorem 6.2.1 (iii) in \cite{FOT} that
$C_c(V)\vert_{V_o}$ ($=C_c(V_o)$) is a core of $\lp\breve{\eee}, \breve{\fff}\rp$.
As a result, we have $\fff_o=\breve{\fff}$ and hence $\lp\eee_o, \fff_o\rp=\lp\breve{\eee}, \breve{\fff}\rp$.
\end{proof}

By the general theory of time changed process (cf. \cite{FOT}, Section 6.2),
we have the following from Proposition \ref{prop-trace}

\begin{cor}\label{cor-trace}
The original Markov chain $\lp \lp X_t^o\rp_{t\ge 0}, \lp \PP^o_x\rp_{x\in V_o}\rp$ is
the time changed process of $\lp \xt, \lp \PP_x\rp_{x\in V}\rp$ with respect to the PCAF $\aa_t=\ll_t (V_o)$.
Namely, for a fixed reference point $\bar{x}\in V_o$,
$\lp X_t^o\rp_{t\ge 0}$ has the same law as $\lp X_{\tt_t}\rp_{t\ge 0}$ under $\PP_{\bar{x}}$,
and the lifetime $\zeta_o$ of $\lp X_t^o\rp_{t\ge 0}$ is $\aa_{\zeta}$.
\end{cor}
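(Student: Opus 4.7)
The plan is to deduce this corollary from Proposition \ref{prop-trace} together with the general theory of time change for symmetric Hunt processes, as developed in Section 6.2 of \cite{FOT}. The main task is to identify the PCAF $\aa_t = \ll_t(V_o)$ with the one canonically associated (via the Revuz correspondence) to the measure $\mu_o = \mathbf{1}_{V_o}\cdot\mu$.

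First I would verify that $\mu_o$ is a smooth Radon measure on $V$ that charges no set of zero capacity: this is immediate since $V_o$ is a discrete, locally finite subset of $V$ and every singleton in $V$ has positive capacity, so in particular $\mu_o$ is finite on finite sets. Next, using formula (5.1.13) in \cite{FOT}, I would check that the Revuz measure of the PCAF $\aa_t = \int_0^t \mathbf{1}_{V_o}(X_s)\,ds = \ll_t(V_o)$ is precisely $\mu_o$; this is the same straightforward computation already alluded to in the paragraph preceding (\ref{eq-Feynman-Kac}), applied with $\vv = \mathbf{1}_{V_o}$.

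Having identified the PCAF, I would invoke Theorem 6.2.1 of \cite{FOT}, which states that the time-changed process $\lp X_{\tt_t}\rp_{t\ge 0}$, where $\tt_t = \inf\{s>0: \aa_s > t\}$ is the right-continuous inverse of $\aa_t$, is a $\mu_o$-symmetric Hunt process on the (quasi)-support of $\mu_o$, which is exactly $V_o$, and whose associated Dirichlet form is the trace $\lp\breve{\eee}, \breve{\fff}\rp$ of $(\eee, \fff)$ with respect to $\mu_o$. Combining this with Proposition \ref{prop-trace}, which identifies $\lp\breve{\eee}, \breve{\fff}\rp$ with the original Dirichlet form $(\eee_o, \fff_o)$ of $\go$, and using the uniqueness (up to equivalence) of the Hunt process associated with a regular Dirichlet form, I conclude that $\lp X_{\tt_t}\rp_{t\ge 0}$ under $\PP_{\bar{x}}$ has the same law as $\lp X_t^o\rp_{t\ge 0}$ under $\PP_{\bar{x}}^o$ for $\bar{x}\in V_o$.

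Finally, the lifetime statement $\zeta_o = \aa_\zeta$ also follows from the general time change theory: by the definition of $\tt_t$, the time-changed process is alive precisely on the interval $[0, \aa_\zeta)$, because $\tt_t < \zeta$ iff $t < \aa_\zeta$. Since the only non-routine step is the Revuz-measure identification (which amounts to checking a single formula and is essentially immediate in the discrete setting), I do not anticipate a genuine obstacle; the work is really a matter of quoting the correct theorems from \cite{FOT} in the right order.
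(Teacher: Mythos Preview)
Your proposal is correct and follows essentially the same route as the paper: the paper simply states that the corollary follows from Proposition~\ref{prop-trace} together with the general theory of time changed processes in \cite[Section~6.2]{FOT}, and your write-up just unpacks that citation (Revuz identification of $\aa_t$ with $\mu_o$, then Theorem~6.2.1 of \cite{FOT}, then uniqueness of the Hunt process). There is no genuine difference in approach.
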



\subsection{Solution to a Schr\"{o}dinger equation}

We construct a potential $\uu\in C_b(V)$ and the solution $\varphi\in C_b(V)\cap C_+(V)$
to the corresponding Schr\"{o}dinger equation, with certain nice properties.

\begin{prop}\label{prop-uu-varphi}
  There exists $\uu\in C_b(V)$ with 

  \makeatletter
  \let\@@@alph\@alph
  \def\@alph#1{\ifcase#1\or \or $'$\or $''$\fi}\makeatother
  \begin{subnumcases}
  {\uu (x)=}
  -C_1, &$x\in V\backslash V_o$, \label{eq:c1}\\
  C_2, &$x\in V_o$,\label{eq:c2}
  \end{subnumcases}
  \makeatletter\let\@alph\@@@alph\makeatother
  for some constants $C_1,C_2 >0$, and $\varphi\in C(V)$
with $1\le \varphi\le C_3$ for some $C_3>1$ such that
 \[\lp\Delta+\uu\rp \varphi\ge0.\] The constants $C_1, C_2, C_3$ are absolute.
\end{prop}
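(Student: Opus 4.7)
The strategy is to take $\uu$ as a two-valued step potential, equal to $C_2$ on $V_o$ and $-C_1$ on $V\setminus V_o$, and to take $\varphi$ as a family of small parabolic bumps placed on each subdivided chain. Explicitly, I would set $\varphi(x)=1$ for every $x\in V_o$, and for each $e=(x,y)\in E_o^+$ and $1\le i\le \nn(e)-1$,
\[
\varphi(x_i^e)\;=\;1+\alpha\,\sigma_o(e)^2\,\frac{i(\nn(e)-i)}{\nn(e)^2},
\]
where $\alpha\in(0,1]$ is a small absolute constant to be fixed at the end. Since $\sigma_o\le 1$ and $i(\nn(e)-i)\le \nn(e)^2/4$, this gives $1\le\varphi\le 1+\alpha/4=:C_3$ automatically. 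The scaling factor $\sigma_o(e)^2$ is essential: it is what will allow the adapted-ness condition on $\sigma_o$ to absorb the discrete divergence of $\varphi$ at vertices of $V_o$.

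The estimate at an intermediate chain vertex $z=x_i^e$ is routine. Using the weights and measures from Definition \ref{defi-modified-graph},
\[
\Delta\varphi(z)\;=\;\frac{\nn(e)^2}{2\sigma_o(e)^2}\bigl[2\varphi(z)-\varphi(x_{i-1}^e)-\varphi(x_{i+1}^e)\bigr],
\]
and the function $i\mapsto i(\nn(e)-i)$ has constant discrete second difference equal to $-2$. Hence the bracketed term equals $2\alpha\sigma_o(e)^2/\nn(e)^2$, giving $\Delta\varphi(z)=\alpha$ exactly, independently of $e$ and $i$. Together with $\varphi\le C_3$ this yields $(\Delta+\uu)\varphi(z)=\alpha-C_1\varphi(z)\ge \alpha-C_1C_3$, which is nonnegative as soon as $C_1\le \alpha/C_3$.

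The key step is the estimate at $x\in V_o$. There $\varphi(x)=1$, and each neighbor of $x$ in $V$ is the unique new vertex on the chain of some edge $e\in E_o$ incident to $x$; by the symmetry of $i\mapsto i(\nn(e)-i)$ this neighbor has $\varphi$-value $1+\alpha\sigma_o(e)^2(\nn(e)-1)/\nn(e)^2$, and the edge weight is $\nn(e)w_o(e)$. Summing and using $(\nn(e)-1)/\nn(e)\le 1$ (valid since $\nn(e)\ge 2$), the $\nn(e)$'s cancel out and one is left with
\[
\Delta\varphi(x)\;\ge\;-\frac{\alpha}{\mu_o(x)}\sum_{y\sim_o x}w_o(x,y)\sigma_o(x,y)^2\;\ge\;-\alpha,
\]
where the last inequality is precisely the adapted-ness of $\sigma_o$ on $\go$. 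Choosing $C_2\ge\alpha$ then forces $(\Delta+\uu)\varphi(x)\ge -\alpha+C_2\ge 0$. Finally, fixing $\alpha=1$ yields absolute constants $C_1=4/5$, $C_2=1$, $C_3=5/4$, completing the construction.

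The only genuinely nontrivial point is the choice of scaling. A naive parabolic bump without the $\sigma_o(e)^2$ prefactor would produce a contribution at $x\in V_o$ proportional to the unweighted sum $\frac{1}{\mu_o(x)}\sum_y w_o(x,y)$, which is not bounded in general; it is exactly the $\sigma_o(e)^2$ factor, tailored to the adapted-ness inequality $\frac{1}{\mu_o(x)}\sum_y w_o(x,y)\sigma_o(x,y)^2\le 1$, that allows the same $\alpha$ to balance both the interior and boundary estimates and delivers constants independent of $\go$ and $\nn$.
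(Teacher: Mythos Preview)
Your proof is correct, and it is genuinely different from the paper's construction. The paper builds $\varphi$ on each chain as a \emph{sine} profile: it picks $\theta(e)\in(0,\pi/2)$ with $\cos\theta(e)=1-C_1\sigma_o(e)^2/\nn(e)^2$ and sets
\[
\varphi(x_k^e)=\frac{\sin\bigl(k\theta(e)+\tfrac{\pi-\nn(e)\theta(e)}{2}\bigr)}{\cos\bigl(\tfrac{\nn(e)\theta(e)}{2}\bigr)},
\]
so that on the interior of each chain $\varphi$ is an exact eigenfunction, $\Delta\varphi=C_1\varphi$. This requires a page of trigonometric bounds (controlling $\theta(e)$, $\tan$, $\cos$, etc.) to verify $1\le\varphi\le C_3$ and to handle the endpoint estimate at $V_o$. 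Your parabolic bump instead gives $\Delta\varphi\equiv\alpha$ (a constant, not proportional to $\varphi$), which is enough for a super-solution since $\varphi$ is bounded; the second-difference and endpoint calculations become one-liners. The trade-off is minor: the paper's eigenfunction approach yields the exact identity $\Delta\varphi=C_1\varphi$ on the chain interior, which is conceptually cleaner as a Schr\"odinger solution, whereas your quadratic profile is more elementary and delivers the same absolute constants with less work. Both rely on the same idea at the junctions in $V_o$, namely that the $\sigma_o(e)^2$ scaling converts the boundary contribution into the adapted-ness sum $\frac{1}{\mu_o(x)}\sum_y w_o(x,y)\sigma_o(x,y)^2\le1$.
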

\begin{proof}
  Let $\nn_0=\inf\{\nn(e):e\in E_o^+\}\ge 2$. Define $M_1>1$ to be the minimum of $M$ such that the following inequalities hold for any $\theta\in [0,1/2]$:

  \begin{equation} \label{eq:2}
  \left\{ \begin{aligned}
           \frac{\theta}{M} &\le \sin \theta\le \theta \le \tan \theta\le M\theta; \\
                    \frac{\theta^2}{2M^2}&\le 1-\cos\theta\le \frac{\theta^2}{2}.
                            \end{aligned} \right.
                            \end{equation}
 Define $M(\nn_0)$ to be the minimum of $M$ such that the above inequalities hold for any $\theta\in [0, \frac{1}{\nn_0}]$. It is clear that $M(\nn_0)\le M_1$.
In the following, we fix some $M_0\ge M(\nn_0)$.

 Let $C_1=\frac{1}{2M_0^2}$. For each $e\in E_o^+$, we set $\theta(e)\in (0,\pi/2)$ by
 \[\cos (\theta(e)) =1-\frac{C_1 \sigma_o(e)^2}{\nn(e)^2}.\]
Note that
$$\frac{M_0\sqrt{2C_1}\sigma_o(e)}{\nn(e)}=\frac{\sigma_o(e)}{\nn(e)}\le \frac{1}{\nn_0}$$
because $\sigma_o$ is an adapted weight.
Then we have by (\ref{eq:2}),
\[\cos \left(\frac{\sigma_o(e)}{\nn(e)}\right)\le 1-\frac{C_1 \sigma_o(e)^2}{\nn(e)^2}=\cos(\theta (e)),\]
which implies that
$$\theta(e)\le\frac{\sigma_o(e)}{\nn_0}\le \frac{1}{\nn_0}.$$

Let $C_2 =
C_1 M_1 M_0^2
=\frac{M_1}{2}$
and define the potential $\uu$ as in Proposition \ref{prop-uu-varphi}. Now we start constructing the super-solution $\p$ on $V$.

First, for $x\in V_o\subseteq V$ we simply set $\p(x)=1$.
For $x_k^e\in V_e$, the set of new points, for some $e=(x,y)\in E_o^+$, with $1\le k\le \nn(e)-1$,
we set
\[\p(x_k^e)=\frac{\sin \lp k\theta(e)+\frac{\pi-\nn(e)\theta(e)}{2}\rp}{\cos \lp \frac{ \nn(e)\theta(e)}{2}\rp}.\]
Note that the definition is compatible when we take $k=0$ or $k=\nn(e)$ in the above,
and that $\p(x_k^e)=\p(x_{\nn(e)-k}^e)$ holds.

By the estimate $\nn(e)\theta(e)\le 1$, we have that for all $0\le k\le \nn(e)$,
\[\frac{\pi}{4}\le \frac{\pi-\nn(e)\theta(e)}{2}\le \frac{\pi}{2}; ~~\frac{\pi}{4}\le k\theta(e)+\frac{\pi-\nn(e)\theta(e)}{2}\le\frac{3\pi}{4}.\]
It follows that
$$1\le\p\le \frac{1}{\cos(1/2)}<\sqrt{2}\ (=C_3).$$

We are left to check that $\p$ is indeed a super-solution.
We divide the verification into two cases.

Case 1: Let $x_k^e\in V_e$, for some $e=(x,y)\in E_o^+$, with $1\le k\le \nn(e)-1$. Consider the elementary identity
\[2\sin (k\theta+\Phi) \cos\theta=\sin((k-1)\theta+\Phi)+\sin((k+1)\theta+\Phi),\]
where we can take $\theta=\theta(e)$ and $\Phi=\frac{\pi-\nn(e)\theta(e)}{2}$.
This gives
\[2\p (x_k^e)-\frac{2C_1 \sigma_o(e)^2}{\nn(e)^2}\p (x_k^e) =2 \cos(\theta(e))\p (x_k^e) =\p (x_{k-1}^e)+\p (x_{k+1}^e),\]
which is simply
\[\Delta \p(x_k^e)=C_1 \p(x_k^e).\]

Case 2: Let $x\in V_o$ and $e=(x,y)\in E_o^+ $. We then have by (\ref{eq:2}),
\begin{align*}
  \p(x_1^e)-\p(x)&=\sin (\theta(e))\cdot\tan \left(\frac{\nn(e)\theta(e)}{2}\right)+\cos(\theta(e))-1\\
    &\le \frac{M_1 \nn(e)\theta(e)^2}{2}
    \le M_1 \nn(e)M_0^2(1-\cos(\theta(e)))\\&=M_1 \nn(e)M_0^2\cdot\frac{C_1 \sigma_o(e)^2}{\nn(e)^2}= \frac{C_2 \sigma_o(e)^2}{\nn(e)}.
\end{align*}
Since $\p(x_1^e)=\p(x_{\nn(e)-1}^e)$, the same estimate
\[\p(x_{\nn(e)-1}^e)-\p(x)\le \frac{C_2 \sigma_o(e)^2}{\nn(e)}\]
holds if $e=(y,x)\in E_o^+ $.
We thus arrive at the inequality
\begin{align*}
  -\Delta\p(x)&\le \frac{1}{\mu_o(x)}\sum_{y\in V_o, e=(x,y)\in E_o}w_o(x,y)\nn(e)\cdot\frac{C_2 \sigma_o(e)^2}{\nn(e)}\\
  &=C_2\cdot\frac{1}{\mu_o(x)}\sum_{y\in V_o, (x,y)\in E_o}w_o(x,y)\sigma_o(e)^2\\
  &\le C_2=\uu(x)\p(x),
\end{align*}
by the adapted-ness of $\sigma_o$.

As a consequence, the assertion holds with constants $C_1=\frac{1}{2M_1^2}, C_2=\frac{M_1}{2}, C_3=\sqrt{2} $.
\end{proof}
\subsection{Feynman-Kac formula and a large deviation type argument}
In this subsection, we assume that $\g$ is \emph{conservative}.
\begin{prop}
  \label{prop-large-deviation}
Let $C_1$ and $C_2$ be the same constants as in Proposition {\rm \ref{prop-uu-varphi}}. Then for any
$\varepsilon\in \left(0,\frac{C_1}{C_1+C_2}\right)$ and $\delta>0$, there exist positive constants $c_0, C$ such that for any $t>0$,
  \[\sup_{x\in V}\PP_{x}\lp \aa_t \le \varepsilon t+\delta \rp\le C\exp\lp -c_0 t\rp.\]
\end{prop}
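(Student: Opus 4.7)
The plan is to exploit the two-level structure of the potential $\uu$ from Proposition \ref{prop-uu-varphi} together with the Feynman--Kac supersolution bound of Proposition \ref{prop-minimality}. Since $\uu\equiv -C_1$ on $V\setminus V_o$ and $\uu\equiv C_2$ on $V_o$, and since the conservativeness of $\g$ guarantees $X_s\in V$ for all $s\ge 0$ almost surely, one has the clean identity
\[\int_0^t \uu(X_s)\,ds = C_2\,\aa_t - C_1(t-\aa_t) = -C_1 t + (C_1+C_2)\,\aa_t.\]
Thus on the event $\{\aa_t\le \varepsilon t+\delta\}$,
\[-\int_0^t \uu(X_s)\,ds \;\ge\; \lambda t - (C_1+C_2)\delta, \qquad \lambda := C_1 - (C_1+C_2)\varepsilon,\]
and the hypothesis $\varepsilon<C_1/(C_1+C_2)$ is precisely what guarantees $\lambda>0$, which is why this specific choice of threshold appears in the statement.

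Next I would invoke the supersolution $\varphi$ of Proposition \ref{prop-uu-varphi}, which lies in $C_b(V)\cap C_+(V)$, satisfies $(\Delta+\uu)\varphi\ge 0$, and is pinched by $1\le \varphi\le C_3$. Proposition \ref{prop-minimality} then gives $\tilde{P}_t^{\uu}\varphi \le \varphi$, i.e.,
\[\EE_x\!\left[\varphi(X_t)\exp\!\lp -\int_0^t \uu(X_s)\,ds\rp\right] \;\le\; \varphi(x)\;\le\; C_3.\]
Dropping the factor $\varphi(X_t)\ge 1$ on the left hand side (well-defined, again, because conservativeness forces $X_t\in V$ a.s.) yields the uniform moment bound
\[\sup_{x\in V}\EE_x\!\left[\exp\!\lp -\int_0^t \uu(X_s)\,ds\rp\right] \;\le\; C_3, \qquad t\ge 0.\]

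Finally, combining the two ingredients via Chebyshev's inequality, on the event $\{\aa_t\le \varepsilon t+\delta\}$ we have $\exp\lp -\int_0^t \uu(X_s)\,ds\rp \ge \exp\lp \lambda t - (C_1+C_2)\delta\rp$, and therefore
\[\PP_x(\aa_t\le \varepsilon t+\delta) \;\le\; e^{-\lambda t+(C_1+C_2)\delta}\,\EE_x\!\left[\exp\!\lp -\int_0^t \uu(X_s)\,ds\rp\right] \;\le\; C_3\, e^{(C_1+C_2)\delta}\, e^{-\lambda t},\]
which is the claimed estimate with $c_0=\lambda$ and $C=C_3 e^{(C_1+C_2)\delta}$, uniformly in $x\in V$. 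I do not anticipate any real obstacle in this argument: the essential difficulty has been front-loaded into the explicit construction of the pair $(\uu,\varphi)$ in Proposition \ref{prop-uu-varphi}, whose sign pattern on $V_o$ versus $V\setminus V_o$ was designed precisely so that a standard Chebyshev/Feynman--Kac manipulation collapses the occupation-time event into an exponential tail.
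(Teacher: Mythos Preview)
Your proof is correct and essentially identical to the paper's own argument: both combine the Feynman--Kac supersolution bound $\tilde{P}_t^{\uu}\varphi\le\varphi\le C_3$ from Proposition~\ref{prop-minimality} with the identity $\int_0^t\uu(X_s)\,ds=C_2\aa_t-C_1(t-\aa_t)$ (valid by conservativeness), restrict to the event $\{\aa_t\le\varepsilon t+\delta\}$, and use $\varphi\ge 1$ to arrive at the same constants $c_0=C_1-(C_1+C_2)\varepsilon$ and $C=C_3 e^{(C_1+C_2)\delta}$.
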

\begin{proof}

Let $\uu$, $\varphi$ and $C_1, C_2, C_3$ be as in Proposition \ref{prop-uu-varphi}.
Recall that $\ll_t=\int_0^t \textbf{1}_{\cdot}(X_s)ds$ and $\aa_t=\ll_t(V_o)$.

By Proposition \ref{prop-minimality} and the Feynman-Kac formula, we have
for any $x\in V$ and $\varepsilon\in \left(0,\frac{C_1}{C_1+C_2}\right)$,
\begin{align*}
C_3\ge  \p(x)&\ge
\tilde{P}_{t}^{\uu} \p(x)\\
  &=\EE_x \left[\p(X_t)\exp\lp -\int_{0}^t {\uu}(X_s)ds\rp \right]\\
  &=\EE_x \left[\p(X_t)\exp\lp -\int_{V} {\uu}d\ll_t\rp\right].
\end{align*}
Since we have $\ll_t(V)=t$  by assumption,
the right hand side of the inequality above is equal to
\begin{align*}
&\EE_x [\p(X_t)\exp\lp -C_2 \ll_t(V_o) +C_1 (t-\ll_t(V_o)) \rp ]\\
  &\ge \EE_x [\p(X_t)\exp\lp -C_2 \ll_t(V_o) +C_1 (t-\ll_t(V_o)) \rp; \ll_t(V_o)\le\varepsilon t +\delta ]\\
  &\ge \inf_{x\in V}\p(x)\cdot \exp \{t\lp C_1-\varepsilon (C_1+C_2)\rp -\delta (C_1+C_2)\}\PP_{x}\lp \aa_t \le \varepsilon t+\delta\rp.
\end{align*}
Therefore, the assertion follows by
letting $c_0= C_1-\varepsilon (C_1+C_2)$, and $C=C_3\exp\lp \delta (C_1+C_2)\rp$.
\end{proof}
\begin{rem}
  \rm{The conservativeness is required to ensure $\ll_t(V)=t$. In the explosive case,
 although $\ll_t(V_{\infty})=t$, the above argument does not work as $\p(\infty)=0$.}
\end{rem}\bigskip
\begin{cor}\label{cor-At-estimate}

For any $\varepsilon\in\left(0,\frac{C_1}{C_1+C_2}\right)$ and
$\kappa>0$, there is a random time $\hat{T}$ such that
  \[\PP_{\bar{x}}\lp \aa_t> \varepsilon t+\kappa, \text{~~for all~~}t\geq\hat{T}
\rp=1.\]

\end{cor}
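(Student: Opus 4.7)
The plan is to deduce this almost-sure statement from the exponential probability bound in Proposition \ref{prop-large-deviation} by a standard Borel--Cantelli plus monotonicity argument. The key observation is that $\aa_t$ is nondecreasing and continuous in $t$, so pointwise estimates at a countable sequence of times propagate to all sufficiently large times with only a controlled loss.

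First I would pick an auxiliary $\varepsilon' \in \left(\varepsilon, \frac{C_1}{C_1+C_2}\right)$, leaving a strict margin $\varepsilon' - \varepsilon > 0$ that I will exploit below, and apply Proposition \ref{prop-large-deviation} with parameter $\varepsilon'$ (and, say, $\delta=1$). This yields constants $c_0,C>0$ such that
\[
\PP_{\bar{x}}\lp \aa_n \le \varepsilon' n + 1 \rp \le C\exp(-c_0 n)
\]
for every positive integer $n$. Since the right-hand side is summable in $n$, the Borel--Cantelli lemma provides a $\PP_{\bar{x}}$-almost-surely finite random index $N$ such that
\[
\aa_n > \varepsilon' n + 1 \quad\text{for every integer } n\ge N.
\]

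Next I would interpolate to arbitrary real times using monotonicity of $\aa_\cdot$. For any $t\ge N$, let $n=\lfloor t\rfloor\ge N$; then
\[
\aa_t \ge \aa_n > \varepsilon' n + 1 \ge \varepsilon'(t-1) + 1 = \varepsilon t + (\varepsilon' - \varepsilon)t + (1-\varepsilon').
\]
Choosing $\hat{T} = \max\!\big(N,\,T_0\big)$ where
\[
T_0 = \frac{\kappa + \varepsilon' - 1}{\varepsilon' - \varepsilon}\vee 0,
\]
one checks that $t\ge\hat{T}$ forces $(\varepsilon'-\varepsilon)t + (1-\varepsilon') \ge \kappa$, and consequently $\aa_t > \varepsilon t + \kappa$. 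Since $\hat T$ is finite almost surely, this yields the claim.

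There is no real obstacle here: the only mild point is keeping the margin $\varepsilon' - \varepsilon$ strictly positive (so that the linear-in-$t$ gain dominates the fixed shift $\kappa$ for large $t$), which is why I take $\varepsilon'$ strictly between $\varepsilon$ and the threshold $C_1/(C_1+C_2)$ in Proposition \ref{prop-large-deviation} rather than applying the latter directly with parameter $\varepsilon$.
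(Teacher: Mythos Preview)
Your argument is correct and follows essentially the same route as the paper: apply Proposition~\ref{prop-large-deviation} on a discrete time grid, invoke Borel--Cantelli, and then interpolate to all large $t$ via the monotonicity of $\aa_t$. The only cosmetic difference is how the interpolation loss is absorbed: the paper applies Proposition~\ref{prop-large-deviation} directly with the given $\varepsilon$ but with $\delta=\kappa/(1-\varepsilon)$ and a grid of spacing $\delta$, whereas you keep $\delta=1$, use the integer grid, and instead enlarge $\varepsilon$ to $\varepsilon'$ to create the linear-in-$t$ margin.
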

\begin{proof}
Let $\delta=\kappa/(1-\e)$
and fix $\varepsilon\in\left(0,\frac{C_1}{C_1+C_2}\right)$.
By Proposition \ref{prop-large-deviation}, there exist positive constants $c_0, C$ such that
\begin{equation}\label{ineq-exp-decay}
\PP_{\bar{x}}\lp \aa_t \le \varepsilon t +\delta \rp\le C\exp(-c_0 t).
\end{equation}

Let $t_n=n\delta$ for $n\in\NN$
and define  a sequence of events $\lp E_n\rp_{n\in \NN}$ by
\[E_n=\{\omega\in \Omega: \aa_{t_n}(\omega) \le \varepsilon t_{n}+\delta\}.\]
Then it follows by (\ref{ineq-exp-decay}) that

\[\sum_{n\in \NN}\PP_{\bar{x}}\lp E_n\rp<\infty.\]
Thus by the Borel-Cantelli lemma,
there is an $\NN$-valued random variable $N$, such that for almost all $\omega\in \Omega$, $\omega\in E_n^c$ for any $n\ge N(\omega)$.

Define a random time $\hat{T}=t_{N}$.
Then for any $t\ge t_{N}$ with some $n\ge N$ such that $t_n\le t<t_{n+1}$,
we have \[\aa_t\ge \aa_{t_n}>\varepsilon t_{n}+\delta=\varepsilon t_{n+1}+(1-\e) \delta>\varepsilon t+\kappa,\] almost surely,
whence the assertion follows.
\end{proof}
\begin{rem}\label{rem-SC}
  \rm{Note that the conservativeness of $\g$ implies that of $\go$ by this result, since $\zeta_o=\aa_{\zeta}=\aa_{\infty}=\infty$.
  Unfortunately, we do not know whether Corollary \ref{cor-At-estimate} holds without the assumption on conservativeness.}
\end{rem}\bigskip

We are now ready to prove Theorem \ref{thm-local-time}.
\begin{proof}[Proof of Theorem \ref{thm-local-time}]
We continue using the notations in the previous two results.
Fix
$\varepsilon\in\left(0,\frac{C_1}{C_1+C_2}\right)$,
and consider the random time $\hat{T}$ such that
  \[\PP_{\bar{x}}\lp \aa_t> \varepsilon t, \text{~~for all~~} t\geq \hat{T}
\rp=1.\]
By the definition of right continuous inverse, $\aa_t>\varepsilon t$ implies that
$\tt_{\varepsilon t}\le t.$
We thus have
\[\PP_{\bar{x}}\lp \aa_t> \varepsilon t, \text{~~for all~~} t\geq \hat{T}
\rp\le \PP_{\bar{x}}\lp \tt_{\e t}\le t, \text{~~for all~~} t\geq \hat{T}
\rp.\]
Rename $s=\e t$ and define $ \tilde{T}=\e\hat{T} $.
Then since
\[\PP_{\bar{x}}\lp \tt_{s}\le \frac{1}{\e} s, \text{~~for all~~} s\geq \tilde{T}
\rp=1,\]
the assertion holds for any $C> 1+\frac{C_2}{C_1}$.
\end{proof}
\section{Proof of the main theorem}\label{sect-main-proof}
In this section, we accomplish the proof of Theorem \ref{thm-main} in several steps.
Here we first summarize the overall structure of the proof.

Let $\go$ be a simple weighted graph with an adapted path metric $d_{\sigma_o}$
and fix $\bar{x}\in V_o$.
 Assume that (\ref{eq-assm-measure}) and (\ref{eq-escape-rate-volume-growth}) hold for $\go$. Namely, we assume that
 \[C_o=\inf_{x\in V_o}\mu_o(x)>0,\]
 and
 \[\int^{\infty}\frac{r dr}{\log\mu_o\lp B_{d_{\sigma_o}}(\bar{x},r)\rp}=\infty.\]
First we design the modified weighted graph $\g$ by specifying the weight function $\nn$.
As shown by Lemma \ref{lem-geometry}, the volume growth condition also holds for $(V, d_{\sigma}, \mu)$.
We apply the integral maximum principle (Proposition \ref{prop-integral-maximum-principle})
to obtain estimates for the so-called crossing time for the new Markov chain  $\xt$.
The Borel-Cantelli lemma (applied in Lemma \ref{lem-SC-modified-graph}) then leads to the formula of upper rate function for $\xt$, and its conservativeness as a by product.
By Remark \ref{rem-SC}, we obtain the conservativeness of the original process $\lp X_t^o\rp_{t\ge 0}$.
Conservativeness of $\xt$ also allows us to apply Theorem \ref{thm-local-time}.
Then the same formula of the upper rate function (up to a different constant $c>0$) holds for the original process.

\subsection{Design the modified graph}

Let $f(r)=\log\mu_o\lp B_{d_{\sigma_o}}(\bar{x},r)\rp-\log C_o\ge 0$ for $r\ge 0$.
We now specify the choice of $\nn:E_o\rightarrow \NN$.
Set $R_n=2^{n+4}$ for $n\in \NN$ and
$$\sigma_n=\frac{1}{f(R_n)+2+\log\log R_n}.$$
Choose $\nn$ so that
\begin{equation}\label{n(e)}
\nn(e)\ge f(R_n)+2+\log\log R_n
\end{equation}
for any $e=(x,y)\in E_o$ such that $d_{\sigma_o}(x, \bar{x})\vee d_{\sigma_o}(y, \bar{x})\ge 2^{n+2}-1$.

Let $\g$ be the modified weighted graph for weight $\nn$ with the adapted path metric $d_{\sigma}$ as in Definition \ref{defi-modified-graph}.
The following result is crucial for our estimates later.
\begin{lem}\label{lem-shrinking}
  Let $e=(x,y)\in E_o^+$. If
\[d_{\sigma}(x_k^e, \bar{x})\vee d_{\sigma}(x_{k+1}^e, \bar{x})\ge 2^{n+2}\]
for some $0\le k\le \nn(e)-1$, then
  \[\sigma(x_{k}^e,x_{k+1}^e)\le \sigma_n.\]
\end{lem}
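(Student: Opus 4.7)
\medskip

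\noindent\textbf{Proof plan for Lemma \ref{lem-shrinking}.} The plan is to reduce the hypothesis stated in $d_\sigma$ on the new vertices $x_k^e, x_{k+1}^e$ to the hypothesis stated in $d_{\sigma_o}$ on the original endpoints $x,y$ of $e$, so that the prescription (\ref{n(e)}) on $\nn(e)$ applies, and then read off the conclusion from the identity $\sigma(x_k^e, x_{k+1}^e)=\sigma_o(e)/\nn(e)$ together with $\sigma_o(e)\le 1$.

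First I would observe that, by the structure of the modified graph in Definition \ref{defi-modified-graph}, any path starting at $x_k^e$ must leave $V_e\cup\{x,y\}$ through either $x$ or $y$. Hence, using the edge weight $\sigma(x_i^e, x_{i+1}^e)=\sigma_o(e)/\nn(e)$, one has the elementary bound
\[
d_\sigma(x_k^e, x)\le k\,\frac{\sigma_o(e)}{\nn(e)}\le \frac{(\nn(e)-1)\sigma_o(e)}{\nn(e)}<\sigma_o(e)\le 1,
\]
and likewise $d_\sigma(x_{k+1}^e, y)\le (\nn(e)-k-1)\sigma_o(e)/\nn(e)<1$ (with equality to $0$ if $k+1=\nn(e)$).

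Next I would use the triangle inequality together with Lemma \ref{lem-geometry}(1), which gives $d_\sigma(\bar{x},x)=d_{\sigma_o}(\bar{x},x)$ and $d_\sigma(\bar{x},y)=d_{\sigma_o}(\bar{x},y)$. If $d_\sigma(x_k^e,\bar{x})\ge 2^{n+2}$, then
\[
d_{\sigma_o}(x,\bar{x})=d_\sigma(x,\bar{x})\ge d_\sigma(x_k^e,\bar{x})-d_\sigma(x,x_k^e)>2^{n+2}-1,
\]
and symmetrically if $d_\sigma(x_{k+1}^e,\bar{x})\ge 2^{n+2}$, then $d_{\sigma_o}(y,\bar{x})>2^{n+2}-1$. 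In either case $d_{\sigma_o}(x,\bar{x})\vee d_{\sigma_o}(y,\bar{x})\ge 2^{n+2}-1$, so the defining condition for (\ref{n(e)}) is satisfied and therefore
\[
\nn(e)\ge f(R_n)+2+\log\log R_n=\frac{1}{\sigma_n}.
\]

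Finally, I would conclude by plugging into the explicit formula for $\sigma$ on the new edges:
\[
\sigma(x_k^e,x_{k+1}^e)=\frac{\sigma_o(e)}{\nn(e)}\le \frac{1}{\nn(e)}\le \sigma_n,
\]
which is the claimed inequality. The argument is essentially a bookkeeping exercise; the only potential snag is ensuring that the non-strict/strict versions of the inequalities line up, which is handled by the strict bound $d_\sigma(x_k^e,x)<1$ (coming from $k\le \nn(e)-1$) that turns a strict inequality on distances into the correct non-strict inequality needed to invoke (\ref{n(e)}).
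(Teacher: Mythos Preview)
Your proposal is correct and follows essentially the same route as the paper's proof: both reduce the hypothesis on $x_k^e,x_{k+1}^e$ to the condition $d_{\sigma_o}(x,\bar{x})\vee d_{\sigma_o}(y,\bar{x})\ge 2^{n+2}-1$ via the triangle inequality and Lemma \ref{lem-geometry}(1), then invoke the choice of $\nn(e)$ in (\ref{n(e)}) together with $\sigma_o(e)\le 1$. The paper compresses your case analysis into the single line $d_\sigma(x,\bar{x})\vee d_\sigma(y,\bar{x})\ge d_\sigma(x_k^e,\bar{x})\vee d_\sigma(x_{k+1}^e,\bar{x})-1$, but the content is identical.
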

\begin{proof}
  By the definition of the adapted path metric, we have
  \[d_{\sigma}(x, \bar{x})\vee d_{\sigma}(y, \bar{x})\ge d_{\sigma}(x_k^e, \bar{x})\vee d_{\sigma}(x_{k+1}^e, \bar{x})-1\ge 2^{n+2}-1.\]
Since we see by Lemma \ref{lem-geometry} that
$d_{\sigma_o}(x, \bar{x})\vee d_{\sigma_o}(y, \bar{x})=d_{\sigma}(x, \bar{x})\vee d_{\sigma}(y, \bar{x})$,
it follows by the choice of $\nn$ that $\sigma(x_{k}^e,x_{k+1}^e)=\frac{\sigma_o(x,y)}{\nn(e)}\le \sigma_n .$
\end{proof}


\subsection{Borel-Cantelli argument}

We basically follow the argument in \cite{Gri99-escape} and
\cite{Gri-Hsu} for upper rate functions of the Brownian motion on Riemannian manifolds.

Set $R_n=2^{n+4}$ for $n\in \NN$ as before.
Denote the balls $B_{d_{\sigma}}(\bar{x},R_n)$ by $\hat{B}_n$.
Let $B_n\subseteq \hat{B}_n$ be certain subsets to be chosen later, such that $(B_n)_{n\in \NN}$ is increasing.
Let $\tau_n$ be the exit time of $B_n$, that is, $\tau_n=\tau_{B_n}=\varsigma_{V\backslash B_n}$.

We start with the following standard lemma
which reduces the problem to certain estimates on the so-called crossing time $\tau_n-\tau_{n-1}$.
\begin{lem}
  \label{lem-SC-modified-graph} 
and set $\tau_n=\tau_{B_n}=\varsigma_{V\backslash B_n}$.
 Suppose for a sequence of positive numbers $\{c_n\}_{n=1}^{\infty}$ with
$\sum_{n=1}^{\infty} c_n=\infty$,
we have that
\begin{equation}
  \label{eq-Borel-Cantelli}\sum_{n=1}^{\infty}\PP_{\bar{x}}(\tau_n-\tau_{n-1}\leq c_n
)<\infty.
\end{equation}
Then $\g$ is conservative.
Suppose further that we
can find a strictly increasing homeomorphism
$\psi:\RR_+\rightarrow\RR_+$ such that
\begin{equation}\label{equation-psi}
    t_{n-1}-\psi(R_n)\rightarrow +\infty
\end{equation}
as $n\rightarrow\infty$, where $t_n=\sum_{k=1}^{n}c_k$. Then
$\psi^{-1}(t)$ is an upper rate function
for $\xt$.
\end{lem}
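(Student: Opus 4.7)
The plan is to use Borel--Cantelli on the events $\{\tau_n-\tau_{n-1}\le c_n\}$ to convert the summability hypothesis (\ref{eq-Borel-Cantelli}) into an almost-sure lower bound on crossing times, and then translate this bound first into conservativeness and then, under the extra hypothesis (\ref{equation-psi}), into the desired upper rate function.

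First I would apply Borel--Cantelli directly to (\ref{eq-Borel-Cantelli}) to obtain a random $N<\infty$, finite $\PP_{\bar{x}}$-a.s., such that $\tau_n-\tau_{n-1}>c_n$ for every $n\ge N$. Summing these inequalities gives
\[
\tau_n > \tau_N + \sum_{k=N+1}^{n} c_k = \tau_N + (t_n - t_N), \quad n\ge N,
\]
which, together with $t_n\to\infty$, forces $\tau_n\uparrow\infty$ a.s. Since each $\hat{B}_n$ is finite (by Lemma \ref{lem-geometry}, inherited from finiteness of balls in $(V_o,d_{\sigma_o})$), so is $B_n\subseteq\hat{B}_n$; by the observation preceding Lemma \ref{lem-ur-SC} we have $\tau_n<\zeta$ a.s., hence $\zeta\ge\sup_n\tau_n=\infty$ a.s. This yields the conservativeness of $\g$.

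Next, assume (\ref{equation-psi}). For $\omega$ in the almost-sure event fixed above, and for any $t\ge \tau_N(\omega)$, let $n=n(t,\omega)$ be the unique integer with $\tau_{n-1}\le t<\tau_n$; such an $n$ exists and satisfies $n\ge N$ because $\tau_n\to\infty$. By definition of $\tau_n$ as the exit time of $B_n$, we have $X_t\in B_n\subseteq \hat{B}_n=B_{d_\sigma}(\bar{x},R_n)$, so
\[
d_\sigma(X_t,\bar{x})\le R_n.
\]
On the other hand $t\ge \tau_{n-1}\ge \tau_N+t_{n-1}-t_N$, and by (\ref{equation-psi}) the quantity $t_{n-1}-\psi(R_n)$ tends to $+\infty$, so there is a random index $N'\ge N$ (depending on the constants $\tau_N$ and $t_N$) such that $t_{n-1}-t_N+\tau_N\ge \psi(R_n)$ for all $n\ge N'$. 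Setting the random time $T=\tau_{N'-1}$, we conclude $\psi(R_n)\le t$, and applying the increasing homeomorphism $\psi^{-1}$ gives
\[
d_\sigma(X_t,\bar{x})\le R_n \le \psi^{-1}(t), \qquad t\ge T.
\]
Since conservativeness has already been shown, the requirement $T<\zeta=\infty$ in Definition \ref{defi-upper-rate} is automatic, and $\psi^{-1}$ is the desired upper rate function.

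The only step requiring genuine care is the first one: one must verify that $\tau_n\uparrow\infty$ a.s.\ really forces $\zeta=\infty$, which relies on the finiteness of each $B_n$ and the absence of killing inside $V$ (so that $\tau_n<\zeta$ a.s.\ for every $n$). The remainder is a deterministic bookkeeping argument on the pathwise bound $\tau_n\ge \tau_N+t_n-t_N$, and no further probabilistic input is needed. The actual work of producing sequences $(c_n)$ and $(B_n)$ for which (\ref{eq-Borel-Cantelli}) and (\ref{equation-psi}) hold with $\psi$ of the form in (\ref{eq-escape-rate}) is deferred to subsequent subsections via the integral maximum principle.
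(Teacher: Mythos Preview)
Your proof is correct and follows essentially the same Borel--Cantelli strategy as the paper; the only cosmetic difference is that you partition time by the random intervals $[\tau_{n-1},\tau_n)$ and show $\psi(R_n)\le t$, whereas the paper partitions by the deterministic intervals $(\psi(R_{n-1}),\psi(R_n)]$ and shows $t<\tau_{n-1}$. One small point of bookkeeping: when $t\ge\tau_N$ the index $n$ with $\tau_{n-1}\le t<\tau_n$ actually satisfies $n\ge N+1$ (not merely $n\ge N$), which is what you need for the telescoped bound $\tau_{n-1}\ge\tau_N+t_{n-1}-t_N$ to apply.
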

\begin{proof}
By the Borel-Cantelli lemma,
there exists a subspace $\Omega_1\subset\Omega$
with $\PP_{\overline{x}}(\Omega_1)=1$ and an $\NN_+$-valued random variable $N$ such that,
for any $\omega\in \Omega_1$,
\[\tau_n(\omega)-\tau_{n-1}(\omega)> c_n\]
for all $n\ge N(\omega)$.
As we already mentioned in Section \ref{sect-settings},
there exists a subspace $\Omega_2\subset\Omega$ with $\PP_{\overline{x}}(\Omega_2)=1$ such that
for any $\omega\in \Omega_2$,
we have $\zeta(\omega)>\tau_n(\omega)$  for all $n\in \NN$.
Hence for any $\omega\in \Omega_1\cap \Omega_2$,
\begin{equation}\label{ineq-life}
\zeta(\omega)>\tau_n(\omega)\ge \tau_n(\omega)-\tau_{N(\omega)-1}(\omega)\ge c_{N(\omega)}+\cdots c_n
\end{equation}
for all $n\ge N(\omega)$ . This implies that $\zeta=\infty$ almost surely because $\PP_{\overline{x}}(\Omega_1\cap \Omega_2)=1$.

 Let $T_0=c_1+\cdots+c_{N}$. Then by (\ref{equation-psi}),
there exists an $\NN_+$-valued random variable $N'\ge N+1$ such that for any
$n\ge N'$,
\begin{equation}\label{ineq-t-psi}
t_{n-1}-\psi(R_n)>T_0,
\end{equation}
$\PP_{\bar{x}}$ almost surely.

Let $T= \psi(R_{N'})$. Then for any $t\ge T$ with
\[\psi(R_{n-1})<t\leq\psi(R_n)\]
for some $n\ge N'$,
we have by (\ref{ineq-t-psi}) and (\ref{ineq-life}),
\[t\le\psi(R_n)<t_{n-1}-T_0<\tau_{n-1}.\]
Hence it follows that
\begin{equation}\label{equation-ur-psi-inverse}
d(X_t,\bar{x})\leq R_{n-1}\leq\psi^{-1}(t).
\end{equation}
Notice that $\lim_{n\rightarrow\infty}\psi(R_n)=\infty$, so
(\ref{equation-ur-psi-inverse}) holds for all $t\ge T$,
$\PP_{\bar{x}}$ almost surely. In other words, $\psi^{-1}(t)$ is an upper rate function
for $\xt$.
\end{proof}

Now let us specify the choice of $\{B_n\}_{n\in \NN}$.
Recall the notions of graph theoretical interior and boundary.
To avoid confusion, we use the notations $\inte,\cl$ and $\partial$  only with respect to the graph structure of the modified graph $(V,E)$.
Set $B_n^o=B_{d_{\sigma}}(\bar{x},R_n-1)\cap V_o$ and $E_n^o=E_o\vert_{B_n^o\times B_n^o}$.
Recall that $V_e$ is the set of new points added for an edge $e\in E_o$
(see Definition \ref{defi-modified-graph} (1) above).
Let $B_n'=B_n^o\cup \bigcup_{e\in E_n^o }V_e$.
We define
$B_n=\inte B_n'.$ 
\begin{lem}\label{lem-Bn}
The following assertions hold.
\begin{enumerate}
    \item $\partial B_n\subseteq V_o$;
    \item $B_{d_{\sigma}}(\bar{x},R_n-3)\subseteq B_n\subseteq B_{d_{\sigma}}(\bar{x},R_n)$;
    \item $\forall n\ge 1$, $x\in \partial B_{n-1}, y\in \partial B_{n}\Rightarrow d_{\sigma}(x,y)\ge R_{n-1}-3$.
\end{enumerate}
\end{lem}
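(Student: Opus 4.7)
The plan hinges on one structural feature of $\g$: because every original edge is subdivided into $\nn(e) \ge 2$ pieces, no two vertices of $V_o$ are neighbors in $(V,E)$. Consequently, each $x \in V_o$ has its neighbors among the first subdivision vertices $x_1^e$ (or $x_{\nn(e)-1}^e$) of incident edges, and each $z = x_i^e \in V_e$ has exactly two neighbors, lying in $V_e \cup \{a,b\}$ for $e=(a,b)$. Combined with the fact that a new vertex $x_i^e$ lies in $B_n'$ only when $e \in E_n^o$ (equivalently, both endpoints $a, b \in B_n^o$, which then drags all of $V_e$ into $B_n'$), this tight description will drive all three parts.

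For (1) I would argue by contradiction: if $z \in \partial B_n$ were in $V_e$ for some $e = (a, b) \in E_o^+$, then the neighbor of $z$ realizing $z \in \cl(B_n)$ would force either $V_e \cap B_n' \ne \emptyset$ or $\{a,b\} \cap B_n' \ne \emptyset$, and in either case this yields $e \in E_n^o$. But then $V_e \subseteq B_n'$ and $a, b \in B_n^o \subseteq B_n'$, so all neighbors of $z$ lie in $B_n'$ and hence $z \in \inte B_n' = B_n$, contradicting $z \in \partial B_n$. Thus $\partial B_n \subseteq V_o$.

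For (2), the upper inclusion $B_n \subseteq B_{d_\sigma}(\bar{x}, R_n)$ is direct: a point of $B_n^o$ has $d_\sigma(\cdot,\bar{x}) \le R_n - 1$, and for $z \in V_e$ with $e = (a,b) \in E_n^o$ one gets $d_\sigma(z, \bar{x}) \le (R_n - 1) + \sigma_o(e) \le R_n$ using $\sigma_o \le 1$. For the lower inclusion, fix $z$ with $d_\sigma(z, \bar{x}) \le R_n - 3$ and split into $z \in V_o$ versus $z \in V_e$. In either case, the triangle inequality together with Lemma \ref{lem-geometry}(1) and $\sigma_o \le 1$ places the $V_o$-endpoints of the relevant original edges within $B_{d_\sigma}(\bar{x}, R_n - 2) \cap V_o = B_n^o$, so those edges sit in $E_n^o$; this puts every neighbor of $z$ in $B_n'$, giving $z \in \inte B_n' = B_n$.

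For (3), parts (1) and (2) immediately force $x, y \in V_o$, after which the key move is to observe that $x \in \partial B_{n-1}$ actually implies $x \in B_{n-1}^o$: since $x \in V_o$, its $B_{n-1}$-neighbor must be some $z = x_1^e$ with $e = (x, y'') \in E_o$, and $z \in B_{n-1}$ forces $e \in E_{n-1}^o$, i.e., both $x, y'' \in B_{n-1}^o$. Hence $d_\sigma(x, \bar{x}) \le R_{n-1} - 1$, while $y \notin B_n$ combined with (2) gives $d_\sigma(y, \bar{x}) > R_n - 3$. Using $R_n - R_{n-1} = R_{n-1}$, the triangle inequality yields $d_\sigma(x, y) \ge (R_n - 3) - (R_{n-1} - 1) = R_{n-1} - 2 \ge R_{n-1} - 3$. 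The main subtlety is precisely this refinement in (3): a naive bound for $d_\sigma(x, \bar{x})$ would only give $R_{n-1} + 1$ and lose the required margin; the saving comes from the subdivision structure, which precludes direct $V_o$-to-$V_o$ adjacencies and forces boundary points of $B_{n-1}$ back into $B_{n-1}^o$.
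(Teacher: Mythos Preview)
Your overall strategy is correct and close to the paper's, but there is a genuine gap in part (1) and an unnecessary detour in part (3).

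\textbf{Part (1).} The step ``$\{a,b\}\cap B_n'\ne\emptyset$ yields $e\in E_n^o$'' is not justified as written: knowing only $a\in B_n'$ gives $a\in B_n^o$ (since $a\in V_o$), but says nothing about $b$, so you cannot conclude $e\in E_n^o$. What saves the argument is that the witnessing neighbor lies in $B_n$, not merely $B_n'$. If the neighbor is $a\in B_n\cap V_o$, then by definition of $B_n=\inte B_n'$ every $(V,E)$-neighbor of $a$ lies in $B_n'$; these neighbors are precisely the first subdivision points $x_1^{e'}$ of original edges $e'$ incident to $a$, and for such a point to lie in $B_n'$ forces $e'\in E_n^o$. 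Taking $e'=e$ yields $b\in B_n^o$. The paper avoids the contradiction argument altogether by first proving the structural identity $B_n\cap V_o=\tilde B_n^o$ (the $(V_o,E_o)$-interior of $B_n^o$) and then reading off $B_n=\tilde B_n^o\cup\bigcup_{e\in E_n^o}V_e$, from which $\partial B_n\subseteq V_o$ is immediate.

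\textbf{Part (3).} Your refinement $x\in\partial B_{n-1}\Rightarrow x\in B_{n-1}^o$ is correct and even gives the slightly better bound $R_{n-1}-2$, but the extra work is not needed, and your closing remark that the ``naive bound would only give $R_{n-1}+1$'' is off. The paper sharpens the upper inclusion in (2) to $B_m'\subseteq B_{d_\sigma}(\bar x,R_m-\tfrac12)$ (taking the closer of the two endpoints), and since every edge out of a $V_o$-vertex has $\sigma$-length $\sigma_o(e)/\nn(e)\le\tfrac12$, one gets $\partial B_m\subseteq B_{d_\sigma}(\bar x,R_m-3)^c\cap B_{d_\sigma}(\bar x,R_m)$ directly. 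The triangle inequality then yields $d_\sigma(x,y)>(R_n-3)-R_{n-1}=R_{n-1}-3$ with no further structural analysis.
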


\begin{proof}
By the construction of $(V, E)$, we see that $B_n\cap V_o$ is simply the interior $\tilde{B}_n^o$ of $ B_n^o$
with respect to the original graph structure $(V_o, E_o)$.

\noindent
(1) \ Note that $\bigcup_{e\in E_n^o }V_e\subseteq B_n$ since $\cl(\bigcup_{e\in E_n^o }V_e)\subseteq B_n'$. We also have $\partial(\bigcup_{e\in E_n^o }V_e)\subseteq B_n^o$.
So $B_n$ is characterized as $B_n=\tilde{B}_n^o\cup \bigcup_{e\in E_n^o }V_e$. The assertion (1) follows.

\noindent
(2) \
Similarly we have $B_{d_{\sigma}}(\bar{x},R_n-2)\cap V_o\subseteq B_n$, since $\cl(B_{d_{\sigma}}(\bar{x},R_n-2)\cap V_o)\subseteq B_n'$.
For $x\in B_{d_{\sigma}}(\bar{x},R_n-3)\cap V_o^c$, we have $x\in \bigcup_{e\in E_n^o }V_e$. It follows that $B_{d_{\sigma}}(\bar{x},R_n-3)\subseteq B_n$.
The assertion (2) holds by observing
that  $B_n\subseteq B_n'\subseteq B_{d_{\sigma}}(\bar{x},R_n-1+1/2)$.

\noindent
(3) \ For any $n\ge 1$ and $x\in \partial B_{n}$, we have shown that $x\in B_{d_{\sigma}}(\bar{x},R_n-3)^c\cap B_{d_{\sigma}}(\bar{x},R_n)$.
The assertion (3) follows by noting $R_n-R_{n-1}=R_{n-1}$.
\end{proof}

\begin{rem}\rm
By Lemma \ref{lem-Bn}, we can  regard $\{B_n\}_{n\in \NN}$  as an increasing sequence of ``balls" in $V$
so that all boundary points are ``forced'' in $V_o$.
\end{rem}\bigskip


\subsection{Estimate for the heat equation}
The key technical problem is to
estimate the quantity
\[\PP_{\bar{x}}(\tau_n-\tau_{n-1}\leq c_n).\]
By the strong Markov property of the Markov chain $\xt$,
we have
\begin{equation}\label{eq-markov-prop}
  \PP_{\bar{x}}(\tau_n-\tau_{n-1}\leq c_n)
=\mathbb{E}_{\bar{x}}(\PP_{X_{\tau_{n-1}}}(\tau_n\leq c_n)).
\end{equation}
Set \[r_n= 2^{n+2}-3\ge 5,\]
for $n\ge 1$.
Then since
$$
r_n<R_{n}-R_{n-1}-3=R_{n-1}-3
$$
for all $n\ge 1$, we see by (3) of Lemma \ref{lem-Bn} that
the Markov chain $\lp X_t\rp_{t\ge 0}$ must run out of a ball $B_{d_\sigma} (X_{\tau_{n-1}}, r_n)$
before it leaves $B_n$. 
Moreover, by noting that
\[X_{\tau_{n-1}}\in \partial B_{n-1}, X_{\tau_{n}}\in \partial B_{n},\]
it follows  from (\ref{eq-markov-prop}) that
\[\PP_{\bar{x}}(\tau_n-\tau_{n-1}\leq c_n)\leq \sup_{z\in\partial B_{n-1}}\PP_{z}(\tau_{B_{d_{\sigma}}(z,r_n)}\leq
  c_n).\]

For a fixed $z\in\partial B_{n-1}$, define
\[u(x,t)=\PP_x (\tau_{B_{d_{\sigma}}(z,r_n)}\leq
  t)\]
as a function on $B_{d_{\sigma}}(z,r_n)\times[0,\infty)$.
We now estimate $u(x,t)$ by using
Propositions \ref{prop-heat-solution} and \ref{prop-integral-maximum-principle}.
In order to do so, we first show the following lemma.
Set $K_n=B_{d_{\sigma}}(z,r_n-\sigma_n)$ and $L_n=B_{d_{\sigma}}(z,r_n)$.

\begin{lem}\label{k-l}
{\rm (i)}  \ For any $x\in L_n$ and $y\in V$ with $x\sim y$, the inequality
$d_{\sigma}(x,y)\le \sigma_n$ holds.

\noindent
{\rm (ii)}\ The inclusion $\cl(K_n)\subseteq L_n$ holds.
\end{lem}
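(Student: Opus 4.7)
My approach is to derive both parts from a single geometric observation: every point of $L_n$ lies in the region where Lemma \ref{lem-shrinking} forces short edge lengths.

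First I would show that $L_n$ sits well inside the ``far-from-$\bar x$'' regime. Since $z\in\partial B_{n-1}$, Lemma \ref{lem-Bn}(2) gives $B_{d_\sigma}(\bar x, R_{n-1}-3)\subseteq B_{n-1}$, so $z\notin B_{d_\sigma}(\bar x, R_{n-1}-3)$, that is, $d_\sigma(z,\bar x)>R_{n-1}-3=2^{n+3}-3$. The triangle inequality then yields, for every $x\in L_n$,
\[
d_\sigma(x,\bar x)\;\ge\;d_\sigma(z,\bar x)-d_\sigma(x,z)\;>\;(2^{n+3}-3)-(2^{n+2}-3)\;=\;2^{n+2}.
\]

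For (i), I would then recall that by Definition \ref{defi-modified-graph} the only edges in the modified graph $\g$ are the links $x_k^e\sim x_{k+1}^e$ along a subdivided edge $e\in E_o^+$; in particular two vertices of $V_o$ never share an edge in $\g$. Hence any $y\sim x$ with $x\in L_n$ has $\{x,y\}=\{x_k^e,x_{k+1}^e\}$ for some $e$ and some $k$. Since $d_\sigma(x,\bar x)\ge 2^{n+2}$ from the preceding paragraph, the hypothesis of Lemma \ref{lem-shrinking} is met, giving $\sigma(x_k^e,x_{k+1}^e)\le\sigma_n$. The single-edge path from $x$ to $y$ is admissible in the infimum defining $d_\sigma$, so $d_\sigma(x,y)\le\sigma(x,y)\le\sigma_n$.

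For (ii), any $w\in\cl(K_n)$ either already belongs to $K_n\subseteq L_n$ or there exists $v\in K_n$ with $v\sim w$. In the latter case $v\in L_n$, and part (i) applied at $v$ gives $d_\sigma(v,w)\le\sigma_n$, whence
\[
d_\sigma(z,w)\;\le\;d_\sigma(z,v)+d_\sigma(v,w)\;\le\;(r_n-\sigma_n)+\sigma_n\;=\;r_n,
\]
so $w\in L_n$. I do not anticipate a substantive obstacle: the lemma is essentially bookkeeping, and both the radius $r_n=2^{n+2}-3$ and the choice of the weight function $\nn$ on $E_o$ made in (\ref{n(e)}) were tailored precisely so that this geometric buffer works out. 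The one point deserving care is the structural remark about neighbors in $\g$ always being two consecutive points on the same subdivided edge, since this is what allows Lemma \ref{lem-shrinking} to be invoked uniformly over $L_n$ in (i).
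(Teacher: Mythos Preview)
Your argument is correct and follows essentially the same route as the paper's proof: both obtain $d_\sigma(x,\bar x)\ge 2^{n+2}$ for $x\in L_n$ via the triangle inequality and Lemma \ref{lem-Bn}(2), then invoke Lemma \ref{lem-shrinking} for (i) and deduce (ii) from (i) by another triangle inequality. Your write-up is simply more explicit than the paper's, in particular in spelling out that every edge of $\g$ has the form $x_k^e\sim x_{k+1}^e$ so that Lemma \ref{lem-shrinking} applies.
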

\begin{proof}
(i) \
For any  $x\in L_n$ 
we have by the triangle inequality,
\[d_{\sigma}(x, \bar{x})
\ge R_{n-1}-3-r_n=2^{n+2},\]
and thus $d_{\sigma}(x,y)\le\sigma(x,y)\le \sigma_n$ for $y\sim x$ by Lemma \ref{lem-shrinking}.
\noindent

(ii) \ Since
$\sigma(x,y)\leq \sigma_n$
for any $x\in K_n, y\in V$ with $x\sim y$, 
we obtain
$\cl(K_n)\subseteq L_n$.

\end{proof}

By Lemma \ref{k-l} (ii) and Proposition \ref{prop-heat-solution},
the function $u(x,t)$ is a solution to
the heat equation
\begin{equation*}
\frac{\partial}{\partial t}u(x,t)+\Delta u(x,t)=0
\end{equation*}
on $K_n\times [0, \infty)$ with $u(x,0)\equiv 0$. 
In order to apply Proposition \ref{prop-integral-maximum-principle} to $u(x,t)$ for $K=K_n$ and $L=L_n$,
we choose
the auxiliary functions to be
\begin{equation}\label{def-aux}
\xi(x,t)=-2\a_{n}^2 e^{4} t-2\a_{n} d_{\s}(x,z)
\text{~~and~~}\eta(x)=\frac{(e^{\a_{n}(r_n-\sigma_n)}-e^{\a_{n} d_{\s}(x,z)})_+}{e^{\a_{n}(r_n-\sigma_n)}-1}
\end{equation}
with
$$\a_{n}=\frac{2f(R_n)+2\log\log R_n}{r_n}.$$
The key is that
\begin{equation}\label{ineq-key}
\a_n\sigma_n\le \frac{2}{r_n}\le 1.
\end{equation}
Using this inequality, we next obtain the following lemma.
\begin{lem}
The following inequality holds for any $n\geq 1$.
$$\sum_{x\in K_n}u^2(x,s)\eta^2(x)e^{\xi(x,s)}\mu(x)
\leq 2\int_{0}^{s}\sum_{x\in L_n}\sum_{y\in
L_n}w(x,y)(\eta(x)-\eta(y))^2 e^{\xi(x,t)}dt.$$
\end{lem}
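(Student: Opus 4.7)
My plan is to apply the integral maximum principle (Proposition \ref{prop-integral-maximum-principle}) directly with the choices $K=K_n$, $L=L_n$, and the auxiliary functions $\eta$ and $\xi$ specified in (\ref{def-aux}). Since $u(x,t)$ already solves the heat equation on $K_n\times[0,\infty)$ with $u(\cdot,0)\equiv 0$ by Proposition \ref{prop-heat-solution} combined with Lemma \ref{k-l}(ii), everything reduces to verifying the four hypotheses on $\eta$ and $\xi$.

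Hypotheses (1) and (2) are essentially immediate: $\eta$ is nonnegative by construction and vanishes outside $\{x: d_\sigma(x,z)<r_n-\sigma_n\}=K_n$, and $\xi$ is affine in $t$. For hypothesis (3), I will observe that both $\eta(x)$ and $e^{\xi(x,t)}$ depend on $x$ only through $d_\sigma(x,z)$ and are (weakly) decreasing in that quantity; hence $\eta^2(x)-\eta^2(y)$ and $e^{\xi(x,t)}-e^{\xi(y,t)}$ always carry the same sign. So the real content of the lemma is hypothesis (4), the pointwise inequality
\[
\mu(x)\frac{\partial}{\partial t}\xi(x,t)+\frac{1}{2}\sum_{y\in L_n}w(x,y)\bigl(1-e^{\xi(y,t)-\xi(x,t)}\bigr)^{2}\le 0.
\]

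To handle (4), I set $\theta_{xy}:=\xi(y,t)-\xi(x,t)=-2\alpha_n\bigl(d_\sigma(y,z)-d_\sigma(x,z)\bigr)$. The triangle inequality for the adapted path metric gives $|d_\sigma(y,z)-d_\sigma(x,z)|\le d_\sigma(x,y)\le\sigma(x,y)$; for $x\in L_n$ and $y\sim x$, Lemma \ref{k-l}(i) yields $\sigma(x,y)\le\sigma_n$, so $|\theta_{xy}|\le 2\alpha_n\sigma_n\le 2$ by the key inequality (\ref{ineq-key}). Using the elementary bound $(1-e^{\theta})^{2}\le\theta^{2}e^{2|\theta|}$, I get
\[
\bigl(1-e^{\theta_{xy}}\bigr)^{2}\le 4\alpha_n^{2}\bigl(d_\sigma(y,z)-d_\sigma(x,z)\bigr)^{2}e^{4}\le 4\alpha_n^{2}\sigma(x,y)^{2}e^{4}.
\]
Summing over $y$ and invoking the adaptedness of $\sigma$, namely $\sum_{y}w(x,y)\sigma(x,y)^{2}\le\mu(x)$, I obtain
\[
\tfrac{1}{2}\sum_{y\in L_n}w(x,y)(1-e^{\theta_{xy}})^{2}\le 2\alpha_n^{2}e^{4}\mu(x),
\]
which is exactly canceled by $\mu(x)\partial_{t}\xi(x,t)=-2\alpha_n^{2}e^{4}\mu(x)$. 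This establishes (4), and the stated inequality then follows verbatim from Proposition \ref{prop-integral-maximum-principle}.

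The one step that requires genuine care is the derivation of (4): the nonlocal term $(1-e^{\theta_{xy}})^{2}$ is not controlled by $\theta_{xy}^{2}$ alone, and it is precisely the bound $\alpha_n\sigma_n\le 1$ from (\ref{ineq-key}) that keeps the exponential factor $e^{2|\theta_{xy}|}$ bounded by the harmless constant $e^{4}$. This is the quantitative reason why the modification of the graph in Definition \ref{defi-modified-graph}, together with the tailoring of $\nn(e)$ via (\ref{n(e)}), was arranged exactly so that $\alpha_n\sigma_n$ stays uniformly small far from $\bar{x}$.
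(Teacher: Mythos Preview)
Your argument is correct and follows essentially the same route as the paper: verify the four hypotheses of Proposition~\ref{prop-integral-maximum-principle} for the choices in (\ref{def-aux}), with the work concentrated on~(4) via the elementary bound $(1-e^{\theta})^{2}\le\theta^{2}e^{2|\theta|}$ together with Lemma~\ref{k-l}(i), inequality~(\ref{ineq-key}), and the adaptedness of $\sigma$. One small point: the conclusion of Proposition~\ref{prop-integral-maximum-principle} carries a factor $u^{2}(y,t)$ on the right-hand side, so the stated inequality does not follow ``verbatim''; you still need the trivial observation $0\le u\le 1$ (since $u$ is a probability) to drop that factor, exactly as the paper does.
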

\begin{proof}
We first verify that
the conditions (1)-(4) in Proposition \ref{prop-integral-maximum-principle}
hold for $K=K_n$ and $L=L_n$,  $\xi$ and  $\eta$ as above.
We now check the condition (4) in Proposition \ref{prop-integral-maximum-principle}.
It follows by the triangle inequality that
$$|\xi(y,t)-\xi(x,t)|\leq 2\a_n|d_{\sigma}(x,z)-d_{\sigma}(y,z)|\leq 2\a_nd_{\sigma}(x,y).$$
Hence by  the inequality
$|e^t-1|\leq |t|e^t \ (t\in\RR)$, we have
\begin{equation}\label{ineq-aux}
(1-e^{\xi(y,t)-\xi(x,t)})^2\leq 4\alpha_n^2d_{\sigma}(x,y)^2e^{4\alpha_nd_{\sigma}(x,y)}.
\end{equation}
Then for any $x,y\in L_n$ with $x\sim y$,  we see by Lemma \ref{k-l} (i) and(\ref{ineq-key}) that
$$\a_n d_{\sigma}(x,y)\leq \a_n\sigma_n\le 1.$$
We also know that $d_{\sigma}(x,y)\leq \sigma(x,y)$ for any $x,y\in V$ with $x\sim y$
by the definition of the adapted metric.
Therefore, by these inequalities, the right hand side of (\ref{ineq-aux}) above is less than
$4\a_n^2\sigma(x,y)^2e^{4}$
for any $x,y\in L_n$ with $x\sim y$.
We further recall that the weighted function $\sigma$ is adapted, that is,
\begin{equation}\label{adapted-again}
\sum_{y\in L_n}w(x,y)\sigma(x,y)^2\leq 1
\end{equation}
for any $x\in L_n$.
As a consequence, we have for any $x\in L_n$ and $t>0$,
\begin{align*}
&\mu(x)\frac{\partial}{\partial t}\xi(x,t)+\frac{1}{2}\sum_{y\in L_n}w(x,y)(1-e^{\xi(y,t)-\xi(x,t)})^2\\
\le&-2\a_{n}^2 e^{4}\mu(x)+\frac{1}{2}\sum_{y\in L_n}w(x,y)\cdot 4\a_n^2\sigma^2(x,y)e^4\le 0.
\end{align*}
Namely, the condition (4) holds. We can also check the conditions (1)-(3) directly.
We finally see by  Proposition \ref{prop-integral-maximum-principle} that
 \begin{eqnarray}
   \begin{aligned}
&\sum_{x\in
K_n}u^2(x,s)\eta^2(x)e^{\xi(x,s)}\mu(x)\\
\leq &2\int_{0}^{s}\sum_{x\in L_n}\sum_{y\in
L_n}w(x,y)(\eta(x)-\eta(y))^2 u(y,t)^2e^{\xi(x,t)}dt\\
\leq &2\int_{0}^{s}\sum_{x\in L_n}\sum_{y\in
L_n}w(x,y)(\eta(x)-\eta(y))^2 e^{\xi(x,t)}dt,
   \end{aligned}
 \end{eqnarray}
where we used the fact that $0\leq u\leq 1$.
\end{proof}

We finally estimate the quantity $\PP_{\bar{x}}(\tau_n-\tau_{n-1}\leq c_n)$ as follows.
\begin{prop}
For any $s>0$, the following inequality holds.
\begin{equation}\label{equation-exit-time}
  \PP_{\bar{x}}(\tau_n-\tau_{n-1}\leq s)\le 2\exp\left\{e^{4}\a_n^2s-\frac{3}{2}f(R_n)-2\log\log R_n\right\}.
\end{equation}
\end{prop}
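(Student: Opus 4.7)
The plan is to combine the strong Markov property with the integral maximum principle just proved. First, using the strong Markov property at time $\tau_{n-1}$ together with Lemma \ref{lem-Bn}(3) (which forces $X_{\tau_{n-1}} \in \partial B_{n-1}$ to exit the ball $B_{d_\sigma}(X_{\tau_{n-1}},r_n)$ before leaving $B_n$), we reduce the problem to
$$\PP_{\bar{x}}(\tau_n - \tau_{n-1} \le s) \le \sup_{z \in \partial B_{n-1}} u(z,s),$$
where $u(x,t) = \PP_x(\tau_{L_n} \le t)$ is the function appearing in the previous lemma. Note that $z \in \partial B_{n-1} \subseteq V_o$ by Lemma \ref{lem-Bn}(1), so $\mu(z) \ge C_o$.

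The next step is to extract a pointwise bound on $u(z,s)$ from the integral inequality. Since $d_\sigma(z,z) = 0$ lies in $K_n$, one has $\eta(z) = 1$ and $\xi(z,s) = -2\a_n^2 e^4 s$. Keeping only the term $x=z$ on the left-hand side,
$$u(z,s)^2\,\mu(z)\,e^{-2\a_n^2 e^4 s} \le 2\int_0^s \sum_{x,y \in L_n} w(x,y)(\eta(x) - \eta(y))^2 e^{\xi(x,t)}\,dt.$$

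The heart of the argument is to bound the right-hand side. Writing $D(x) = d_\sigma(x,z)$ and using the mean value theorem on the cutoff, together with $|D(x) - D(y)| \le \sigma(x,y)$, gives the Lipschitz-type estimate
$$(\eta(x) - \eta(y))^2 \le \frac{\a_n^2 e^{2\a_n \min(D(x),r_n-\sigma_n) + 2\a_n \sigma_n}}{(e^{\a_n(r_n-\sigma_n)} - 1)^2}\,\sigma(x,y)^2.$$
The crucial Davies-type cancellation is that the exponential weight $e^{-2\a_n D(x)}$ in $\xi(x,t)$ absorbs the growth factor $e^{2\a_n D(x)}$ coming from differentiating $\eta$, leaving only the harmless factor $e^{2\a_n \sigma_n} \le e^2$ by (\ref{ineq-key}). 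Summing first over $y$ using the adapted inequality $\sum_y w(x,y)\sigma(x,y)^2 \le \mu(x)$ and then over $x \in L_n$, followed by integration in $t$ against $e^{-2\a_n^2 e^4 t}$, yields a bound of the form
$$\mathrm{RHS} \le \frac{\mu(L_n)}{e^2(e^{\a_n(r_n-\sigma_n)} - 1)^2}.$$

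To finish, one translates back to the original graph: Lemma \ref{lem-geometry}(2) and the inclusion $L_n \subseteq B_{d_\sigma}(\bar{x},R_n)$ give $\mu(L_n) \le 3C_o e^{f(R_n)}$, while $\mu(z) \ge C_o$. Finally, since $\a_n r_n = 2f(R_n) + 2\log\log R_n$ and $\a_n \sigma_n \le 1$, for $n$ large enough that $\a_n(r_n - \sigma_n) \ge \log 2$ one has the lower bound $e^{\a_n(r_n-\sigma_n)} - 1 \ge \tfrac{1}{2}e^{2f(R_n) + 2\log\log R_n - 1}$. Assembling these estimates and taking square roots produces the claimed bound (\ref{equation-exit-time}), with the absolute constant $2$ in front arising from the interplay of the $\sqrt{3}$ volume factor, the $e^{-1}$ from the geometric series, and the factor $2$ from bounding $(1 - e^{-\a_n(r_n-\sigma_n)})^{-1}$.

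The main obstacle is the careful bookkeeping of exponential factors in the Lipschitz estimate for $\eta$: one must verify that the Davies weight $e^{-2\a_n D(x)}$ exactly absorbs the growth from the cutoff, which uses in an essential way both the adapted-ness of $\sigma$ and the fact that $\a_n \sigma_n \le 1$ engineered by the choice of $\nn$ in (\ref{n(e)}).
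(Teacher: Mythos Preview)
Your proposal is correct and follows essentially the same approach as the paper: the strong Markov reduction to $\sup_{z\in\partial B_{n-1}} u(z,s)$, pointwise extraction at $x=z$ from the integral inequality, the Lipschitz estimate on $\eta$ producing the Davies-type cancellation of $e^{2\a_n d_\sigma(x,z)}$ against the weight $e^{\xi(x,t)}$, summation via adaptedness of $\sigma$, and the final exponential bookkeeping using $\a_n r_n = 2f(R_n)+2\log\log R_n$. The only cosmetic difference is that the paper dispenses with your ``for $n$ large enough'' qualifier by noting $\a_n r_n \ge 2\log\log 16 \ge 2$ for \emph{all} $n\ge 1$, and in fact both arguments deliver the constant $2\sqrt{3}$ rather than the $2$ appearing in the displayed statement.
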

\begin{proof}
In a similar way to deriving (\ref{ineq-aux}), we obtain
\begin{equation*}
\begin{split}
\left(e^{\a_{n} d_{\sigma}(x,z)}-e^{\a_{n} d_{\sigma}(y,z)}\right)^2
&=e^{2\a_n d_{\sigma}(x,z)}\left(1-e^{\a_n(d_{\sigma}(y,z)-d_{\sigma}(x,z))}\right)^2\\
&\leq \a_{n}^2e^{2\a_nd_{\sigma}(x,z)+2\a_n\sigma_n}\sigma(x,y)^2.
\end{split}
\end{equation*}
Then by (\ref{ineq-key}) and (\ref{adapted-again}),
we have the following estimate for any $x\in L_n$,
\begin{eqnarray}
\begin{aligned}
&\sum_{y\in
L_n}w(x,y)(\eta(x)-\eta(y))^2\\
\leq &\frac{1}{(e^{\a_{n}(r_n -\sigma_n)}-1)^2}\sum_{y\in
L_n}w(x,y)\left(e^{\a_{n} d_{\s}(x,z)}-e^{\a_{n} d_{\s}(y,z)}\right)^2\\
\leq &\frac{\a_{n}^2 e^{2\a_{n} d_{\s}(x,z)+2\a_{n}\sigma_n}}{(e^{\a_{n}(r_n
-\sigma_n)}-1)^2}\sum_{y\in V}w(x,y)\sigma^2(x,y)\\
\leq &\frac{\a_{n}^2 e^{2\a_{n} d_{\s}(x,z)+2}}{(e^{\a_{n}(r_n-\sigma_n)}-1)^2}\mu(x).
\end{aligned}
\end{eqnarray}
Therefore, it follows that for any $z\in \partial B_{n-1}$
\begin{align*}
&u^2(z,s)\mu(z)e^{-2\a_{n}^2 e^{4}s}\\
\le &\sum_{x\in
K_n}u^2(x,s)\eta^2(x)e^{\xi(x,s)}\mu(x)\\
\leq&2\int_{0}^{s}\sum_{x\in L_n}\sum_{y\in
L_n}w(x,y)(\eta(x)-\eta(y))^2 e^{\xi(x,t)}dt\\
\leq&2\int_{0}^{s}\sum_{x\in L_n}\frac{\a_{n}^2 e^{2\a_{n}
d_{\s}(x,z)+2}}{(e^{\a_{n}(r_n-\sigma_n)}-1)^2} \times e^{-2\a_{n} d_{\s}(x,z)-2\a_{n}^2 e^{4}t}\mu(x)dt\\
= &\frac{e^{-2}(1-\exp\lp -2\a_{n}^2
e^{4}s\rp)}{(e^{\a_{n}(r_n-\sigma_n)}-1)^2}\mu(L_n).
\end{align*}
Here we recall
that $\partial B_{n-1}\subseteq V_o$, which is the key point of construction of $(B_n)_{n\in\NN}$.
By assumption, we have
$\mu(z)\geq C_o>0$.
Hence it follows that
\begin{equation*}
u^2(z,s)\leq\frac{1}{C_o}\frac{e^{-2}(e^{2\a_{n}^2
e^{4}s}-1)}{(e^{\a_{n}(r_n-\sigma_n)}-1)^2}\mu(B_{d_{\s}}(z,r_n)).
\end{equation*}
In particular,
by noting that $B_{d_{\s}}(z,r_n)\subset B_{d_{\s}}(\overline{x}, R_n)$,
we obtain
\begin{align}\label{eq-main-estimate}\nonumber
\PP_{\bar{x}}(\tau_n-\tau_{n-1}\leq s)&\leq \sup_{z\in\partial B_{n-1}}u(z, s)
\leq\sqrt{\frac{\mu(B_{d_{\sigma}}(\bar{x},R_n))}{e^2C_o}}
\frac{e^{\a_{n}^2 e^{4}s}}{e^{\a_{n}(r_n-\sigma_n)}-1}\\
&\le \sqrt{\frac{3\mu_o(B_{d_{\sigma_o}}(\bar{x},R_n))}{e^2C_o}}
\frac{e^{\a_{n}^2 e^{4}s}}{e^{\a_{n}(r_n-\sigma_n)}-1} \le \sqrt{\frac{3\exp f(R_n)}{e^2}}
\frac{e^{\a_{n}^2 e^{4}s}}{e^{\a_{n}(r_n-\sigma_n)}-1},
\end{align}
where in the second last inequality we applied (2) of Lemma \ref{lem-geometry}.

Note
the elementary fact that if $r_n\ge2$, then \[\frac{1}{2}\a_n
r_n\le \a_{n}(r_n-\sigma_n)\]
by (\ref{ineq-key}). This implies that
\begin{equation}\label{ineq-ar}
1-e^{-\a_{n}(r_n-\sigma_n)}\ge 1-e^{-\frac{1}{2}\a_n r_n}\ge \frac{\a_n r_n}{\a_n r_n+2}.
\end{equation}
Since $R_n=2^{n+4}$ and
\begin{equation}\label{eq-ar}
\a_n r_n=2(f(R_n)+\log\log R_n),
\end{equation}
it follows that $\a_n r_n\ge 2\log\log16\ge 2$.
Together with (\ref{ineq-ar}) and (\ref{eq-ar}),
substituted for the inequality (\ref{eq-main-estimate}), we have
\begin{align*}
    \PP_{\bar{x}}(\tau_n-\tau_{n-1}\leq s)&\leq
    \sqrt{3}\left(1+\frac{2}{\a_n r_n}\right)
    \exp\left\{\a_n^2e^{4}s+\frac{1}{2}f(R_n)
    -\a_n r_n\right\}\\
    &\le 2\sqrt{3}\exp\left\{\a_n^2e^{4}s+\frac{1}{2}f(R_n)-\a_n r_n\right\} \\
&= 2\sqrt{3}\exp\left\{e^{4}\a_n^2s-\frac{3}{2}f(R_n)-2\log\log R_n\right\},
\end{align*}
which completes the proof.
\end{proof}

We are now in a position to finish the proof of 
Theorem \ref{thm-main} by choosing proper $c_n$.
Recall that $r_n=2^{n+2}-3\ge \frac{1}{16}R_n$. Choose \[c_n=\frac{ r_n^2 }{8e^4(f(R_n)+\log\log R_n)}\]
so that  \[\a_n^2c_n=\frac{1}{2e^4}(f(R_n)+\log\log R_n).\]
By (\ref{equation-exit-time}), we have
\begin{align*}
  \PP_{\bar{x}}(\tau_n-\tau_{n-1}\leq c_n)&\le 2\sqrt{3}\exp\left\{\frac{1}{2}(f(R_n)+\log\log R_n)-\frac{3}{2}f(R_n)-2\log\log R_n\right\}\\
  &\le 2\sqrt{3}(\log R_n)^{-3/2},
\end{align*}
and thus
\[\sum_{n=1}^{\infty}\PP_{\bar{x}}(\tau_n-\tau_{n-1}\leq c_n)<\infty.\]

We are left to verify that $\sum_{n=1}^{\infty} c_n=\infty$. Indeed, we have
\begin{align*}
    t_{m}= \sum_{n=1}^{m}c_n
    &= \sum_{n=1}^{m}\frac{ r_n^2 }{8e^4(f(R_n)+\log\log R_n)}\\
    &\ge\sum_{n=1}^{m}\frac{ R_n^2 }{2048e^4(f(R_n)+\log\log R_n)}\\
    &= \frac{1}{4096e^4}\sum_{n=1}^{m}\frac{R_{n+1}(R_{n+1}-R_{n})}{f(R_n)+\log\log R_n}\\
&\ge \frac{1}{4096e^4}\int_{R_1}^{R_{m+1}}\frac{rdr}{f(r)+\log\log r},
\end{align*}
which approaches to $\infty$ by the volume growth assumption.
We now define  $\psi$ by
  \[
  \psi(R)=\frac{1}{8192e^4}\int_{32}^{R}\frac{rdr}{f(r)+\log\log r}.\]
Since $t_m-\psi(R_{m+1})\rightarrow \infty$ as $m\rightarrow \infty$ by assumption,
$\psi^{-1}(t)$ is an upper rate function for $\g$ by Lemma \ref{lem-SC-modified-graph}.
Combining this with  Corollary \ref{cor-local-time}, we have Theorem \ref{thm-main} for the original weighted graph $\go$.


\section{Examples}\label{sect-examples}

We apply Theorem \ref{thm-main} and Corollary \ref{cor-main} to several weighted graphs.

\begin{eg}\label{bd-process} \rm (Birth and death processes, \cite[Examples 1.19 and 1.20]{Huang-escape}) \
Let ${\mathbb Z}_+$ be the totality of nonnegative integers and
consider the weighted graph $({\mathbb Z}_+, w, \mu)$.
We assume that $w$ is a symmetric weight function on ${\mathbb Z}_+\times {\mathbb Z}_+$ such that
for $(x,y)\in {\mathbb Z}_+\times {\mathbb Z}_+$,
$$w(x,y)>0 \iff |x-y|=1.$$
For simplicity, we also assume that
\begin{equation}\label{as-adapted}
\mu(n)\leq 2w(n,n+1)\quad \hbox{for any $n\in {\mathbb Z}_+$}.
\end{equation}
Define the weight function $\sigma$ by
$$\sigma(n,n+1)=\sqrt{\frac{\mu(n)}{2w(n,n+1)}}.$$
Then $\sigma$ is adapted by definition and (\ref{as-adapted}), and the adapted path metric $d_{\sigma}$ is given by
$$d_{\sigma}(m,n)=\sum_{k=m}^{{n-1}}\sigma(k,k+1) \quad \hbox{for $n>m \ (m,n\in {\mathbb Z}_+)$.}$$

In the sequel, we further assume that $\mu\asymp 1$ and
$$w(n,n+1)\asymp \frac{1}{2}(n+1)^2\log(n+2)^{\beta}\log\log(n+3)^{\gamma}, \quad n\in {\mathbb Z}_+$$
for some $\beta,\gamma\geq 0$ with (\ref{as-adapted}). Since
$$\sigma(n,n+1)\asymp\frac{1}{(n+1)\log(n+2)^{\beta/2}\log\log(n+3)^{\gamma/2}},$$
we obtain for $0\leq \beta<2$ or $\beta=2$ with $0\leq \gamma\leq 2$,
$$d_{\sigma}(0,n)\asymp
\begin{cases}
\displaystyle \frac{\log(n+2)^{1-\beta/2}}{\log\log(n+3)^{\gamma/2}} & 0\leq \beta<2\\
\displaystyle \log \log(n+3)^{1-\gamma/2} & \beta=2, \  0\leq \gamma<2\\
\displaystyle \log\log\log(n+30) & \beta=\gamma=2.
\end{cases}$$
Hence  for all large $r>0$,
$$
\log\mu(B_{d_{\sigma}}(0,r))\asymp
\begin{cases}
\displaystyle r^{2/(2-\beta)}(\log r)^{\gamma/(2-\beta)} & 0\leq \beta<2\\
\displaystyle \exp\left(cr^{2/(2-\gamma)}\right) &  \beta=2, \ 0\leq \gamma<2\\
\displaystyle \exp(\exp(cr)) & \beta=\gamma=2.
\end{cases}$$
On the other hand, if  $\beta>2$ or $\beta=2$ with $\gamma>2$, then $\sup_{n\in\ZZ_+}d_{\sigma}(0,n)<\infty$ and
thus $\mu(B_{d_{{\s}}}(0,r))=\infty$ for all large $r>0$.

If $0\leq \beta<1$ or $\beta=1$ with $0\leq \gamma\leq 1$,
then the corresponding birth and death process is conservative by Theorem \ref{thm-main}.
Moreover, if we denote by $\phi(t)$ an upper rate function for the process,
then in a similar way to Corollary \ref{cor-main}, we have
$$
\phi(t)\asymp
\begin{cases}
\displaystyle t^{(2-\beta)/(2-2\beta)}(\log t)^{\gamma/(2-2\beta)} & 0\leq \beta<1\\
\displaystyle \exp\left(ct^{1/(1-\gamma)}\right) &  \beta=1, \ 0\leq \gamma< 1\\
\displaystyle \exp(\exp(ct)) &  \beta=1, \ \gamma=1.\\
\end{cases}$$
\end{eg}


\begin{eg}\label{exam-anti-tree}\rm (Anti-trees, \cite[Example 4.7]{Huang-escape} and \cite{WOJ-Survey}) \
Let $\{S_n\}_{n\in \NN}$ be a sequence of disjoint, finite and non-empty sets such that
$S_0=\{x_0\}$. We set
$$V=\bigcup_{n\in \NN} S_n.$$
Let $\rho_0$ be a function on $V$ such that $\rho_0(x)=n$ for $x\in S_n$. We define
$$E=\left\{(x,y)\in V\times V : |\rho_0(x)-\rho_0(y)|=1\right\}.$$
Then  the pair of $V$ and $E$ form a symmetric graph $(V,E)$ such that
every vertex in $S_n$ is connected to every vertex in $S_{n+1}$
and the associated graph metric $\rho$ satisfies   $\rho_0(x)=\rho(x_0,x)$ for any $x\in V$.

We assume that each vertex $x\in V$ has $c(x)[(\rho_0(x)+2)^{\alpha}(\log (\rho_0(x)+3))^{\beta}]$ neighbors
of vertices with graph distance $\rho_0(x)+1$, where $c(x)$ is a function on $V$
such that $c\asymp 1$.
We  also assume that
$\mu\asymp 1$ and $w(x,y)\asymp {\bf 1}_{\{(x,y)\in E\}}$.
Let ${\rm Deg}(x)$ be a weighed degree of the vertex $x\in V$ defined by
$${\rm Deg}(x)=\frac{1}{\mu(x)}\sum_{y\in V, \,  x\sim y}w(x,y).$$
and
$$\sigma(x,y):=\frac{1}{\sqrt{{\rm Deg}(x)}}\wedge \frac{1}{\sqrt{{\rm Deg}(y)}}\wedge 1, \quad x,y \in V, x\sim y.$$
Then $\sigma$ is a weight function adapted to the weighted graph $(V, w, \mu)$.
Since
$$\sigma(x,y)\asymp \frac{1}{(\rho_0(x)+2)^{\alpha/2}(\log(\rho_0(x)+3))^{\beta/2}}$$
for any $(x,y)\in E$,
the adapted path metric $d_{\sigma}$ satisfies for $0\leq \alpha<2$ or $\alpha=2$ with $0\leq \beta\leq 2$,
$$d_{\sigma}(x_{0},x)\asymp
\begin{cases}
\displaystyle \frac{(\rho_0(x)+2)^{1-\alpha/2}}{(\log(\rho_0(x)+3))^{\beta/2}} & 0\leq \alpha<2\\
\displaystyle (\log(\rho_0(x)+3))^{1-\beta/2} & \alpha=2, \  0\leq \beta<2\\
\displaystyle \log\log(\rho_0(x)+3) & \alpha=\beta=2.
\end{cases}$$
Therefore, we obtain for all large $r>0$,
$$\log \mu(B_{d_{\sigma}}(x_{0},r))\asymp
\begin{cases}
\displaystyle  \log r & 0\leq \alpha<2\\
\displaystyle r^{2/(2-\beta)} &  \alpha=2, \ 0\leq \beta<2\\
\displaystyle e^{cr} & \alpha=\beta=2.
\end{cases}$$
On the other hand, if  $\alpha>2$ or $\alpha=2$ with $\beta>2$, then $\sup_{x\in V}d_{\sigma}(x,x)<\infty$ and
thus $\mu(B_{d_{{\s}}}(x,r))=\infty$ for all large $r>0$.

If $0\leq \alpha<2$ or $\alpha=2$ with $0\leq \beta\leq 1$,
then the corresponding birth and death process is conservative by Theorem \ref{thm-main}.
Furthermore, if we denote by $\phi(t)$ an upper rate function for the process,
then Corollary \ref{cor-main} implies that
$$
\phi(t)\asymp
\begin{cases}
\displaystyle \sqrt{t\log t} &  0\leq \alpha<2, \\
\displaystyle  t^{(2-\beta)/(2-2\beta)} & \alpha=2, \ 0\leq \beta< 1,\\
\displaystyle e^{ct} & \alpha=2, \ \beta=1.
\end{cases}$$
\end{eg}

\begin{eg}\label{exam-tree}\rm (Trees, \cite[Example 4.6]{Huang-escape}) \
For $\alpha\geq 0$ and $\beta\geq 0$, let $T_{\alpha,\beta}=(V, E)$ be a tree rooted at $x_{0}$
and $\rho$ the associated graph distance.
For $x\in V$, let $\rho_0(x)$ be the distance between $x_{0}$ and $x$,
that is, $\rho_0(x)=\rho(x_{0}, x)$ for $x\in V$.

As in Example \ref{exam-anti-tree}, we assume that each vertex $x\in V$ has $c(x)[(\rho_0(x)+2)^{\alpha}(\log (\rho_0(x)+3))^{\beta}]$ neighbors
of vertices with graph distance $\rho_0(x)+1$, where $c(x)$ is a function on $V$
bounded from below and above by positive constants.
We also assume that
$\mu\asymp 1$ and $w(x,y)\asymp {\bf 1}_{\{(x,y)\in E\}}$.
Let ${\rm Deg}(x)$ be a weighed degree of the vertex $x\in V$ defined by
$${\rm Deg}(x)=\frac{1}{\mu(x)}\sum_{y\in V, \,  x\sim y}w(x,y).$$
and
$$\sigma(x,y):=\frac{1}{\sqrt{{\rm Deg}(x)}}\wedge \frac{1}{\sqrt{{\rm Deg}(y)}}\wedge 1, \quad x,y \in V, x\sim y.$$
Then $\sigma$ is a weight function adapted to the weighted graph $(V, w, \mu)$.
Since
$$\sigma(x,y)\asymp \frac{1}{(\rho_0(x)+2)^{\alpha/2}(\log(\rho_0(x)+3))^{\beta/2}}$$
for any $(x,y)\in E$,
the adapted path metric $d_{\sigma}$ satisfies for  $0\leq \alpha<2$ or $\alpha=2$ with $0\leq \beta\leq 2$,
$$d_{\sigma}(x_{0},x)\asymp
\begin{cases}
\displaystyle \frac{(\rho_0(x)+2)^{1-\alpha/2}}{(\log(\rho_0(x)+3))^{\beta/2}} & 0\leq \alpha<2\\
\displaystyle (\log(\rho_0(x)+3))^{1-\beta/2} & \alpha=2, \  0\leq \beta<2\\
\displaystyle \log\log(\rho_0(x)+3) & \alpha=\beta=2.
\end{cases}$$
Therefore, we obtain for all large $r>0$,
$$\log \mu(B_{d_{\sigma}}(x_{0},r))\asymp
\begin{cases}
\displaystyle r^{2/(2-\alpha)}(\log r)^{1+\beta/(2-\alpha)} & 0\leq \alpha<2\\
\displaystyle \exp(cr^{2/(2-\beta)}) &  \alpha=2, \ 0\leq \beta<2\\
\displaystyle {\exp}(\exp(cr)) & \alpha=\beta=2.
\end{cases}$$
On the other hand, if  $\alpha>2$ or $\alpha=2$ with $\beta>2$, then $\sup_{x\in V}d_{\sigma}(x,x)<\infty$ and
thus $\mu(B_{d_{{\s}}}(x,r))=\infty$ for all large $r>0$.

If $0\leq \alpha<1$ or $\alpha=1$ with $\beta=0$,
then the corresponding birth and death process is conservative by Theorem \ref{thm-main}.
Furthermore, if we denote by $\phi(t)$ an upper rate function for the process,
then in a similar way to Corollary \ref{cor-main}, we find that
$$
\phi(t)\asymp
\begin{cases}
\displaystyle t^{(2-\alpha)/(2-2\alpha)}(\log t)^{(2-\alpha+\beta)/(2-2\alpha)} &  0\leq \alpha<1, \\
\displaystyle e^{ct} & \alpha=1, \ \beta=0.
\end{cases}$$
\end{eg}

\begin{rem}\rm
(i) \ Theorem \ref{thm-main} is sharp for Examples \ref{bd-process} and \ref{exam-anti-tree}.
For the conservativeness, Folz \cite[Examples 1 and 2]{FolzSC} mentioned the sharpness
by using the results of Wojciechowski \cite{WOJ-Survey}.
For the escape rate, the sharpness is checked in \cite[Example 1.20 and Section 6]{Huang-escape}
by using an alternative approach (\cite[Theorem 1.15]{Huang-escape}).

On the contrary, Theorem \ref{thm-main} is not sharp for Example \ref{exam-tree}.
In fact, if we take $\alpha=1$, then it follows  by Wojciechowski \cite[Remark 4.3]{WOJ-Survey} that
the associated process is conservative if and only if $0\leq \beta\leq 1$.
Moreover, for $0\leq \alpha<1$ with $\beta=0$, we know by (6.14) of \cite{Huang-escape}
that the function $\phi(t)\asymp t^{(2-\alpha)/(2-2\alpha)}$ can be an upper rate function.

\noindent
(ii) \ Even though the process is not conservative, we have the following information from the adapted metric:
if any ball associated with the adapted metric is relatively compact,
then the Silverstein extension of the corresponding Dirichlet form is uniquely determined
(see \cite{Ku-Sh}).
Roughly speaking, this means that we can not extend the process after the lifetime.
\end{rem}

\begin{eg}\rm
We consider a random walk on ${\mathbb Z}^d$.
We assume that $\mu(x)=1$ for any $x\in {\mathbb Z}^d$
and the weight function $w(x,y)$ satisfies for some $\alpha\geq 0$ and $\beta\geq 0$,
$$w(x,y)\asymp
\{(|x|+2)^{\alpha}(\log(|x|+3))^{\beta}+(|y|+2)^{\alpha}(\log(|y|+3))^{\beta}\}{\bf 1}_{\{|x-y|=1\}},$$
where $|\cdot|$ is the graph distance on ${\mathbb Z}^d$. Then
$$w(x,y)\asymp (|x|+2)^{\alpha}(\log(|x|+3))^{\beta}{\bf 1}_{\{|x-y|=1\}}.$$

Let ${\rm Deg}(x)$ be a weighed degree of the vertex $x\in V$ defined by
$${\rm Deg}(x)=\frac{1}{\mu(x)}\sum_{y\in {\mathbb Z}^d, \, |x-y|=1}w(x,y)$$
and
$$\sigma(x,y):=\frac{1}{\sqrt{{\rm Deg}(x)}}\wedge \frac{1}{\sqrt{{\rm Deg}(y)}}\wedge 1
\quad \hbox{for any $x,y \in {\mathbb Z}^d$ with $|x-y|=1$.}$$
Then $\sigma$ is a weight function adapted to the weighted graph $({\mathbb Z}^d, w, \mu)$.
Since
$$\sigma(x,y)\asymp \frac{1}{(|x|+2)^{\alpha/2}(\log(|x|+3))^{\beta/2}}$$
for any $x,y\in {\mathbb Z}^d$ with $|x-y|=1$,
the adapted path metric $d_{\sigma}$ satisfies for any $0\leq \alpha<2$ or $\alpha=2$ with $0\leq \beta\leq 2$,
$$d_{\sigma}(0,x)\asymp
\begin{cases}
\displaystyle \frac{(|x|+2)^{1-\alpha/2}}{(\log(|x|+3))^{\beta/2}} & 0\leq \alpha<2\\
\displaystyle (\log(|x|+3))^{1-\beta/2} & \alpha=2, \  0\leq \beta<2\\
\displaystyle \log\log(|x|+3) & \alpha=\beta=2.
\end{cases}$$
We thus obtain for all large $r>0$,
$$\log\mu(B_{d_{\sigma}}(x,r))\asymp
\begin{cases}
\displaystyle \log r& 0\leq \alpha<2\\
\displaystyle r^{2/(2-\beta)} &  \alpha=2, \ 0\leq \beta<2\\
\displaystyle e^{cr} & \alpha=\beta=2.
\end{cases}$$
On the other hand, if  $\alpha>2$ or $\alpha=2$ with $\beta>2$, then $\sup_{x\in \ZZ^d}d_{\sigma}(0,x)<\infty$ and
thus $\mu(B_{d_{{\s}}}(0,r))=\infty$ for all large $r>0$.

It follows by Theorem \ref{thm-main} that
the associated random walk is conservative if $0\leq \alpha<2$ or $\alpha=2$ with $0\leq \beta\leq 1$.
Furthermore, we have
$$
\phi(t)\asymp
\begin{cases}
\displaystyle \sqrt{t\log t} & 0\leq \alpha<2\\
\displaystyle t^{(2-\beta)/(2-2\beta)} &  \alpha=2, \ 0\leq \beta<1\\
\displaystyle e^{ct} & \alpha=2, \ \beta=1.
\end{cases}$$
\end{eg}

\section*{Acknowledgement}
XH would like to thank Prof. Grigor'yan for inspiring discussions.
YS was supported in part by the Grant-in-Aid for Young Scientists (B) No.~23740078. We are grateful to M. Schmidt for a careful reading of a preliminary version of this article and helpful comments.
\bibliographystyle{amsplain}
\bibliography{ref}

\end{document}